\newif\ifpictures
\title[Describing the Jelonek set of polynomial maps via Newton polytopes]{Describing the Jelonek set of polynomial maps via Newton polytopes.}
\author{Boulos El Hilany}
\thanks{For this work, the author was supported by the Institute of Mathematics, Polish Academy of Sciences}
\thanks{MSC: Primary 12D10, 14E05, 52B11 }
\thanks{Key words: polynomial maps, Jelonek set, Newton polytopes}
\begin{document}

\maketitle

\begin{abstract} 
Let $\K=\C$, or $\R$, and $S_f$ be the set of points in $\K^n$ at which a polynomial map $f:\K^n\rightarrow\K^n$ is non-proper. Jelonek proved that $S_f$ is a semi-algebraic set that is ruled by polynomial curves, with $\dim S_f\leq n-1$, and provided a method to compute $S_f$ for $\K = \C$. However, such methods do not exist for $\K = \R$. 

In this paper, we establish a straightforward description of $S_f$ for a large family of non-proper maps $f$ using the Newton polytopes of the polynomials appearing in $f$. Thus resulting in a new method for computing $S_f$ that works for $\K=\R$, and highlights an interplay between the geometry of polytopes and that of $S_f$. As an application, we recover some of Jelonek's results, and provide conditions on (non-)properness of $f$. Moreover, we discover another large family of maps $f$ whose $S_f$ has dimension $n-1$ (even for $\K=\R$), satisfies an explicit stratification, and weak smoothness properties. This novel description allows our tools to be extended to all non-proper maps.
\end{abstract}

 \markleft{}
 \markright{}
\section{Introduction} Let $\K$ denote the field $\C$, or $\R$, and $n$ be a positive integer. A polynomial map $f:\K^n \rightarrow \K^n$, $x=(x_1,\ldots,x_n) \mapsto \big( f_1(x),\ldots, f_n(x)\big)$ is \emph{non-degenerate} if the Jacobian matrix $J(f)=\partial f_i/\partial x_j$ has full rank at generic points $x\in\K^n$. It is said to be \emph{non-proper at} $y\in\K^n$ if for any open neighborhood $\mathcal{U}_y$ of $y$, the preimage $f^{-1}(\cl(\mathcal{U}_y))$ (where $\cl ( . )$ denotes the closure) is non-compact. The Jelonek set $S_f\subset \K^n$ is the set of all points $y$ at which $f$ is non-proper (see Example~\ref{ex:bivar-non-proper}). Our goal is to better understand the geometry of the Jelonek set of non-degenerate polynomial maps $f$, and to introduce a simple, more accessible method for computing it, mainly when $\K = \R$. 

We focus on two families of non-degenerate maps called \emph{$T$-BG}, and \emph{very $T$-BG} (the second is included in the first), each of which forms an open dense subset in the space of non-proper polynomial maps. We will shortly make more precise both, the definition of the above two families, and the following main result.

\begin{theorem}\label{th:main}
Let $f:\K^n\rightarrow\K^n$ be a $T$-BG map. Then, the equations of $S_f\subset\K^n$ can be computed using only the data of $f$ at some faces of the Newton polytopes in $\R^n$ of the polynomials $f_1,\ldots,f_n$. Moreover, we make this computation explicit. 

\end{theorem}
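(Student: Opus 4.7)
The plan is to detect points of $S_f$ from the asymptotic behaviour of $f$ along analytic arcs escaping to infinity, and to show that this behaviour is governed entirely by initial forms of $f_1,\ldots,f_n$ along finitely many tropical directions read off the Newton polytopes. The starting point is curve selection: $y \in S_f$ iff there exists an analytic arc $x:[0,\varepsilon)\to \K^n$ with $\|x(t)\|\to\infty$ and $f(x(t))\to y$ as $t\to 0$. By Puiseux expansion in the complex case, or its semi-algebraic analogue in the real case, such an arc can be put into the standard form
\[
x(t) = \xi\, t^{w} + \text{(higher-order terms)},
\]
for some $w \in \Z^n$ with at least one negative coordinate and some $\xi \in (\K^*)^n$. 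The leading behaviour of $f_i(x(t))$ as $t \to 0$ is then controlled by the face
\[
P_i^{w} = \{\alpha \in P_i : \langle w,\alpha\rangle \text{ is minimal}\}
\]
of the Newton polytope $P_i$ of $f_i$, via the initial form $f_i^{w}(x) = \sum_{\alpha \in P_i^{w}} c_{i,\alpha}\, x^\alpha$. Since two vectors selecting the same face on every $P_i$ produce the same initial data, only finitely many $w$'s are relevant, namely the rays of the common refinement of the normal fans of $P_1,\ldots,P_n$ in the outward half-spaces; these rays are exactly the ``some faces'' appearing in the statement.

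For each outward ray $w$, the plan is to compute the initial map $f^{w} = (f_1^{w},\ldots,f_n^{w})$ and to extract the locus in $\K^n$ defined by the requirement that $\lim_{t\to 0} f(x(t))$ exists in $\K^n$. This translates into the vanishing of certain initial forms on $\xi$ together with explicit polynomial equations in the coordinates of $y$, depending only on the face data $P_1^{w},\ldots,P_n^{w}$ and the coefficients of $f$ supported there. Taking the union of these loci over all outward rays will produce a finite, explicit list of (semi-)algebraic equations defining $S_f$. The $T$-BG hypothesis is expected to enter at this point, ensuring that each $f^{w}$ is sufficiently non-degenerate in the Bernstein--Khovanskii sense so that higher-order terms of the Puiseux arc can be solved for without obstruction and no unforeseen cancellation forces one to descend to data outside the listed faces.

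The main obstacle is exhaustiveness: one must show that every $y \in S_f$ is captured by a single ray $w$, rather than by a higher-dimensional cone of the fan or by a pathological arc whose initial forms vanish simultaneously in all $f_i$. This is precisely what $T$-BG is designed to rule out; confirming that it is sharp enough to do so uniformly, while still defining an open dense family of non-proper maps, will be the crux of the proof. A secondary difficulty specific to $\K=\R$ is that the arc must be real, so the admissible $\xi$ form a semi-algebraic subset of $(\R^*)^n$, and sign information must be tracked throughout the computation of the image of $f^{w}$ in order to recover $S_f$ faithfully as a semi-algebraic set, consistent with Jelonek's general result.
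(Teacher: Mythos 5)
Your proposal follows the same overall strategy as the paper---Puiseux arcs escaping to infinity, initial forms of $f_1,\ldots,f_n$ along faces selected by the common refinement of the inner normal fans, and the $T$-BG hypothesis as the genericity condition making the face data decisive---but it has a genuine gap in how the limit point $y$ is detected. You assume $y$ always enters through ``explicit polynomial equations in the coordinates of $y$, depending only on the face data.'' That happens only when at least one selected face $\gamma_i$ contains the origin, so that the constant term $-y_i$ survives restriction to $\gamma_i$; these are the paper's \emph{semi-origin} tuples, which produce the parametrized and lifted sets. For tuples none of whose members contain the origin, the restricted system $(f-y)_\gamma=\underline{0}$ does not involve $y$ at all, and the point $y$ is detected only through the degeneration (non-genericity) of the solution at infinity, i.e.\ the vanishing of a Jacobian determinant that is affine in $y$ --- the \emph{almost semi-origin} tuples and the $\gamma$-Jacobian hyperplanes of Corollary~\ref{cor:Properness-through-Faces3}. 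Your scheme would miss these components entirely; already in the paper's first example (Example~\ref{ex:bivar-non-proper}) the component $\{y_1=0\}$ of $S_f$ arises exactly this way. Relatedly, your plan only argues the inclusion of $S_f$ into the union of candidate loci; the reverse inclusion --- that every point of a parametrized, lifted, or Jacobian locus actually lies in $S_f$ --- requires constructing, for each such point, a family of genuine torus solutions converging to the solution at infinity (Proposition~\ref{prop:solutions-infty2}), and this is where much of the technical work and the precise form of the $T$-BG condition live.

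A secondary imprecision: the relevant directions $w$ are not only the rays of the refined fan but representatives of the relative interiors of all its cones of dimension $1$ through $n-1$. A vector interior to a higher-dimensional cone selects strictly smaller faces than any ray of that cone, and these lower-dimensional tuples contribute components of $S_f$ for general $T$-BG maps; only for the stronger class of very $T$-BG maps does the paper establish a stratification (Proposition~\ref{prop:strat}) showing that the $(n-1)$-dimensional face data suffice. Finally, for $\K=\R$ your remark about tracking signs of $\xi$ is on target, but note that the paper sidesteps a separate real--complex comparison at this stage by arranging, via unimodular monomial changes of coordinates, that the relevant solutions at infinity land in affine coordinate subspaces $T_m\times\{0\}^{n-m}$, where the real and complex arguments run in parallel.
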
 This is the only known method to compute $S_f$ for $\K = \R$, and arbitrary $n$, and it relies mainly on tools from convex geometry. Thus providing a useful correspondence between the combinatorics of polytopes in $\R^n$, and the geometry of $S_f$ in $\K^n$. This makes it simpler, and accessible to implement using linear programming tools. As an application, we provide a first step towards classifying non-proper maps whose Jelonek set has prescribed topological properties.
\begin{theorem}\label{th:very-T-BG}
Let $f:\K^n\rightarrow\K^n$ be a very $T$-BG map. Then, the Jelonek set in $\K^n$ has dimension $n-1$ (even for $\K = \R$), admits a stratification that can be described using the geometry of polytopes in $\R^n$, and its singular locus is either empty, or coincides with complete self-intersections. 
\end{theorem}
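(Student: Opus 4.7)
The plan is to extract the three assertions directly from the explicit description of $S_f$ produced by Theorem~\ref{th:main}. That theorem realises $S_f \subset \K^n$ as a finite union $S_f = \bigcup_\Gamma S_f^\Gamma$, indexed by tuples $\Gamma = (\Gamma_1,\ldots,\Gamma_n)$ of faces of the Newton polytopes of $f_1,\ldots,f_n$ that are compatible in the sense dictated by the common refinement of their normal fans. Each $S_f^\Gamma$ is cut out, inside $\K^n$, by the face-truncated polynomials $f_i^{\Gamma_i}$ via the explicit equations of Theorem~\ref{th:main}. The combinatorics of the contributing tuples is a finite, convex-geometric datum, and all three conclusions will be read off from it.

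First I would establish the dimension claim. For a top-dimensional tuple $\Gamma$ (maximal with respect to the compatibility poset) the equations from Theorem~\ref{th:main} should reduce to a single non-trivial condition on $\K^n$, cutting out a hypersurface. Over $\C$ this is a generic dimension count on the face-truncated system, combined with Jelonek's upper bound $\dim S_f \le n-1$ recalled in the introduction. The substantive issue---and where I expect most of the work---is over $\R$, where one must rule out that the real locus drops below the complex dimension. The very-$T$-BG hypothesis is designed precisely to prevent this: it should ensure that for at least one such $\Gamma$, the face-restricted map $f^\Gamma$ is sufficiently surjective in the directions transverse to $\Gamma$ for $S_f^\Gamma$ to admit a real smooth point, forcing $\dim_\R S_f^\Gamma = n-1$.

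The stratification then comes for free from the face poset: declaring each stratum to be $S_f^\Gamma$ with the union of its proper sub-pieces removed yields a finite decomposition into locally closed subsets whose closure relations mirror face incidences. The very-$T$-BG condition should further upgrade each top-dimensional stratum to a smooth real hypersurface by guaranteeing non-vanishing of the gradient of its defining equation. For the singular locus, stratum-wise smoothness localises the singularities of $S_f$ onto intersections of several top strata; the face-combinatorial origin of their defining equations should make the corresponding gradients linearly independent at any common zero, so that $k$-fold intersections have codimension $k$ and meet transversally, i.e.\ form complete intersections. Hence either no two top strata meet, and the singular locus is empty, or the singular locus is exactly this transverse union, as claimed. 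Throughout, the main obstacle is transporting the complex dimension and transversality statements to $\R$, which is the role the very-$T$-BG hypothesis is designed to play; the rest of the proof is bookkeeping with the face poset.
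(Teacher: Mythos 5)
Your outline follows the same architecture as the paper --- decompose $S_f$ via Corollary~\ref{cor:main:1} into pieces indexed by tuples of faces, read the stratification off the face poset, get the complex dimension from Jelonek's $\dim_\C S_f=n-1$, and control the real locus and the singular locus via the genericity hypotheses --- but the two steps you flag as "where the work is" are exactly the steps you leave as assertions, and in both cases the mechanism you gesture at is not the one that works. For the real dimension, saying that very-$T$-BG makes $f^\Gamma$ "sufficiently surjective" so that a real smooth point exists is not an argument. The actual chain is: (i) the real stratum $\mathcal{X}_{\gamma'}(f)$ is \emph{nonempty} because one starts from an actual point $y\in S_f$ and the stratification (Proposition~\ref{prop:strat}, whose proof is where very-$T$-BG enters, via the inequality $|\theta^c|\le m$) pushes $y$ into the closure of the top strata; (ii) each $\mathcal{X}_{\gamma'}$ is smooth away from complete self-intersections because the $T$-BG condition forces the Jacobians of the face-restricted maps $F_{\gamma'}$ to have full rank (Lemma~\ref{lem:smooth}); (iii) hence the nonempty real locus cannot sit inside the singular locus of the complexified stratum, so $\dim_\R=\dim_\C=n-1$. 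There is also a subtlety you do not see: the complexification of a real very-$T$-BG map need not be very $T$-BG (conjugate non-generic solutions can appear), so one must first perturb the coefficients and check (Lemma~\ref{lem:pert-non-prop}) that this does not change $\dim_\R\mathcal{X}_\gamma$.

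For the singular locus, the theorem's claim is about complete \emph{self}-intersections of the individual pieces $\mathcal{X}_\gamma(f)$, and it is proved by the full-rank Jacobian argument just mentioned, not by transversality of distinct top strata. Your version --- that gradients of defining equations of different strata "should" be linearly independent at common zeros because of their "face-combinatorial origin" --- is an unsupported claim, and it is not what the hypotheses deliver: the $T$-BG/very-$T$-BG conditions are genericity conditions on solutions of the face-restricted systems, and they say nothing directly about how two different strata $\mathcal{X}_\delta$ and $\mathcal{X}_{\delta'}$ meet. So while the skeleton of your proof matches the paper's, the real-dimension step and the singular-locus step are genuine gaps as written.
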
 

\subsection{Motivation and previously-known results} Introduction and study of $S_f$ was originally done by Jelonek~\cite{Jel93,Jel99}, and was motivated by the further understanding of the \emph{Jacobian conjecture}. This notorious problem states that for $\K = \C$, the condition $\det J(f)\in\C^*$ is sufficient for a dominant polynomial map $f$ to be invertible. Attempts to proving it lead to important results, however, only partial ones are known (see e.g.~\cite{BCW82,dEss12}). Although the Jacobian conjecture remains unsolved for $n\geq 2$, the Jelonek set nevertheless constitutes an important invariant for polynomial maps, and it has found its way into several applications in affine geometry. The utility of its computation appears, e.g., in the study of biforcation sets for maps $\K^n\rightarrow\K$~\cite{JelKur14,JelTib17}, tackling the Russell Conjecture about characterization of affine spaces~\cite{Jel10}, and answering several questions about the topology of polynomial maps~\cite{Jel99}.

 It was first shown for the complex case in~\cite{Jel93} that $S_f$ is either empty or it is a \emph{$\C$-uniruled} algebraic hypersurface (i.e. ruled by affine curves $\Gamma$ that are images of polynomial maps $\C\rightarrow\Gamma$). This property turns out to be true also for polynomial maps between affine varieties with arbitrary dimensions over fields in arbitrary characteristics~\cite[Theorem 5.7]{Jel99},~\cite{Sta05,Jel10,JelLas18}. Moreover, the Jelonek set is an $\R$-uniruled semi-algebraic variety for $\K=\R$ that satisfies $1\leq \dim S_f\leq n-1$ \cite{Jel02}. In his original paper on the subject, Jelonek provided an upper bound on the degree of $S_f$, and a method of computation using Gr\"obner bases. Later, Stasica~\cite{Sta02} proved that this method can be used to compute $S_f$ for polynomial maps between affine varieties over an algebraically closed field.

Regarding simpler methods of computation, Jelonek shows in~\cite{Jel01c} that for $n=2$, one can use the resultant of polynomials. For $n\geq 3$, however, Gr\"obner bases remains the only tool employed to compute $S_f$ when $\K=\C$, or any other algebraically closed field. On the other hand, one cannot use it anymore whenever $\K = \R$. Hence, describing the topology of $f$ for the real case, remains a difficult problem in general. This emphasizes the importance of the goal of this paper, especially that polynomial maps between two real spaces appear in applications such as algebraic statistics~\cite{DrStSu08}, and optimization problems~\cite{Las10}.

\subsection{Stratagem} Write $x^a$ for the $n$-variate monomial $x_1^{a_1}\cdots x_n^{a_n}$, with $x=(x_1,\ldots,x_n)$ a coordinate, and $a=(a_1,\ldots,a_n)\in\N^n$. Thus, a polynomial is a finite linear combination \[P(x) = \sum_{a\in\N^n} c_ax^a,\] whose \emph{support}, $\supp P\subset\N^n$, is $\left\lbrace a\in\N^n~|~c_a\neq 0\right\rbrace$, and whose \emph{Newton polytope} $\New P\subset\R^n$ is the convex hull of $ \supp P$. The \emph{Newton tuple} $\underline{\New} f$ of $f$ will refer to the tuple $(\New f_1,\ldots,\New f_n )$.
A \emph{face} of a convex polytope $\Delta\subset\R^n$ is the intersection of $\Delta$ with a hyperplane minimizing the value of some linear function on $\Delta$.

A point in the $n$-torus $T_n=(\K^*)^n$ escapes ``to infinity'' whenever its image reaches a point in $S_f$. This asymptotic behaviour depends on a vector $\alpha\in\R^n$. On the one hand, this vector determines a choice $\gamma_1,\ldots,\gamma_n\subset\R^n$, denoted by $\gamma$, of (not necessarily proper) faces of $\New f_1,\ldots,\New f_n$ respectively, which we call a \emph{tuple of $\underline{\New} f$} (see Section~\ref{subsec:types-faces}). On the other hand, tracking down this escaping point in $T_n$ involves solving the polynomial system 
\begin{equation}\label{eq:sys:0}
\left\{
  \begin{array}{ccc}
   f_1 - y_1 & = & 0,\\
\ & \vdots & \\
  f_n -y_n & = & 0, \\
  \end{array}\right. 
\end{equation} 

expressed as $f-y=\underline{0}$, for some $y\in\K^n$ (treated here as a vector of coefficients). The relation between the two phenomena, detailed in Proposition~\ref{prop:solutions-infty1}, is the following well-known fact from~\cite{Ber75}: Detecting solutions at infinity requires solving, in $T_n$, a \emph{restricted sub-system} $(f-y)_\gamma = \underline{0}$ of~\eqref{eq:sys:0}. This consists of sub-polynomials $f_i -y_i$ at which we forget all monomials whose exponent vectors in $\N^n$ do \emph{not} belong to $\gamma_i$ (see Notation~\ref{not:restriction}). 

The above description is standard (see Proof of Item~\ref{it:ex-sol} of Proposition~\ref{prop:solutions-infty1}). The main part of this paper is exploiting it in order to outline an explicit correspondence (see Corollary~\ref{cor:main:1}), between the semi-algebraic components (algebraic in case where $\K = \C$) of $S_f$, and a subset of tuples of $\underline{\New} f$. This relation offers a straight-forward way to computing $S_f$.

\subsection{Plan of the paper} In Section~\ref{sec:clas-tor-Jel}, we aim to introduce some important notations, together with the \emph{toric non-properness} set (see Definition~\ref{def:unstab-toric-Jel}) that, similarly to the Jelonek set, keeps track of points escaping to $\K^n\setminus T_n$. 

Section~\ref{sec:faces}, has three main objectives. The first is to describe the convex geometry of the Newton polytopes (Proposition~\ref{prop:normal-fan}), which will be crucial for the rest. The second one (Section~\ref{subsec:tuple-types}) is to introduce the types of tuples of $\underline{\New} f$ that are in correspondence (mentioned above) with the components of $S_f$. The third objective is to define a toric change of coordinates (Section~\ref{subsec:mon-change}) that transforms the relevant solutions at infinity into ones in the affine hyperplanes. 

The proof, and description of this correspondence is detailed in Section~\ref{sec:main-results}. 

We apply our main results in Section~\ref{sec:compute} to express the equations of the Jelonek set from restricted polynomials. These expressions are summarized in Corollary~\ref{cor:main:1}. 

The goal of Sections~\ref{sec:app}, and~\ref{sec:very-T-BG}, is to introduce some of the applications of our method. In order to outline them here, we briefly describe the family of $T$-BG maps.

Solutions at infinity to~\eqref{eq:sys:0} which are related to $S_f$, are mapped to proper sub-tori in the boundary of a suitable compactification of $T_n$. This is defined using the \emph{algebraic moment map} (see~\cite{Ati82}) corresponding to the \emph{Minkowski sum} in $\R^n$ (see Definition~\ref{def:Minkowski}) of the members in $\underline{\New} f$. 
We require that if those sub-tori contain the common zero locus of the projective algebraic varieties defined by any subset of the set of polynomials $f_1,\ldots,f_n$, then this locus can only be the result of complete intersections. Such maps are called \emph{$T$-boundary generic}, or $T$-BG for short (see Definition~\ref{def:weakly}). 

The $T$-BG condition gives an accessible description of the Jelonek set, and at the same time, forms the complement of an algebraic variety in the space of non-proper maps. 

\subsection{Applications} 
Corollary~\ref{cor:sufficient} describes some sufficient conditions, on the coefficients, and $\underline{\New} f$, for properness of $f$. As for the necessary ones, the following is also proven in Section~\ref{sec:app}.

\begin{corollary}\label{cor:necessary}
Let $f:\K^n\rightarrow\K^n$ be a proper non-degenerate map. Then, the union of the polytopes $\New f_1,\ldots,\New f_n\subset\R^n$, intersects all coordinate axes of $\R^n$.
\end{corollary}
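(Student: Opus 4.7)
The plan is to prove the contrapositive: if the union $\bigcup_{j=1}^{n}\New f_j$ fails to meet some coordinate axis, then $f$ is non-proper. The first key step is to translate the geometric hypothesis into an algebraic one. Since each $\New f_j$ sits inside the non-negative orthant $\R^n_{\geq 0}$, its intersection with the $i$-th coordinate axis lies on the ray $\R_{\geq 0}e_i$. A point $a\in\supp f_j$ lies on this ray precisely when $a=m\,e_i$ for some $m\in\N$, i.e., when the corresponding monomial of $f_j$ is of the form $c_a\,x_i^m$ (with $m=0$ accounting for a constant term). Taking convex hulls preserves this, so the assumption that no $\New f_j$ meets the $i$-th axis forces every monomial of every $f_j$ to contain at least one factor $x_k^{a_k}$ with $k\neq i$ and $a_k\geq 1$. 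In particular, no $f_j$ has a nonzero constant term.

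Next I would exploit this by evaluating $f$ along the $i$-th coordinate line $L_i:=\{(0,\ldots,0,t,0,\ldots,0) : t\in\K\}\subset\K^n$ (with $t$ in the $i$-th slot). For any $a\in\supp f_j$, the restriction of $x^a$ to $L_i$ contains the factor $x_k^{a_k}|_{L_i}=0^{a_k}=0$ for some $k\neq i$, so every monomial vanishes on $L_i$, and thus $f_j|_{L_i}\equiv 0$ for all $j$. Consequently $L_i\subseteq f^{-1}(0)$, so the preimage of the compact set $\{0\}\subset\K^n$ contains an unbounded affine line. This contradicts properness of $f$, completing the contrapositive.

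There is no genuinely hard step here: the whole argument hinges on the translation between the combinatorial condition (a coordinate axis is avoided) and the algebraic one (no purely $x_i$-monomial appears in any $f_j$), which is immediate from $\supp f_j\subset\N^n$ together with the convexity of the Newton polytopes. I would note that the argument uses only properness of $f$, not its non-degeneracy, so the conclusion holds in slightly greater generality than stated. Alternatively, one could derive the corollary from the explicit correspondence of Corollary~\ref{cor:main:1} by exhibiting a tuple of $\underline{\New} f$ producing a curve in $S_f$ through the origin, but the direct preimage argument above is both shorter and self-contained.
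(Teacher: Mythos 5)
Your proof is correct, but it takes a genuinely different route from the paper's. The paper reduces the corollary to the claim that \emph{every minimized origin tuple of $\underline{\New} f$ is basic}, and rules out a non-basic origin tuple by invoking parts of the machinery of Proposition~\ref{prop:solutions-infty2} (the \textbf{First case} and \textbf{The last statement} paragraphs) to manufacture a point of $S_f$. You instead prove the contrapositive directly: a missed axis forces every monomial of every $f_j$ to vanish on the corresponding coordinate line, so that whole line sits inside a single fiber and properness fails. This is essentially the elementary observation the paper itself uses to prove Lemma~\ref{lem:f0}, so your argument is shorter, self-contained, and --- as you correctly note --- uses only properness, not non-degeneracy or the $T$-BG apparatus. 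One caveat worth flagging: under the paper's standing normalization $f(\underline{0})\in T_n$, the origin $\underline{0}$ lies in every $\New f_j$, so every polytope meets every coordinate axis at $\underline{0}$ and the literal hypothesis of your contrapositive can never hold; the reading consistent with the rest of the paper (cf.\ the proof of Lemma~\ref{lem:f0}) is that some open ray $\R_{>0}\cdot e_i$ is missed. Your argument survives that reading essentially verbatim, except that $f_j|_{L_i}$ is then the constant $f_j(\underline{0})$ rather than $0$, and the unbounded fiber is $f^{-1}\big(f(\underline{0})\big)$ rather than $f^{-1}(0)$.
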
 In the second part of Section~\ref{sec:app}, we recover $\K$-uniruledness results appearing in~\cite{Jel93}, and~\cite{Jel02} for $\K = \C$, and $\R$ respectively when $f$ is a $T$-BG generic map. 

Section~\ref{sec:very-T-BG} is devoted to studying very $T$-BG maps (see Definition~\ref{def:very}), and proving Theorem~\ref{th:very-T-BG}. The stratification property mentioned therein is made precise in Proposition~\ref{prop:strat}, and it gives a method to further simplify the computation of $S_f$ (see Remark~\ref{rem:comput}). 

\begin{remark}
We believe that descriptions in this paper can be extended, and refined, to maps $f$ that are not $T$-BG. This would be the subject of a future work. 
\end{remark}

 We note that Newton polytopes were used before as means to study properness/non-properness of maps (c.f.~\cite{Biv07,CGM96,Tha09}). However, to the best of our knowledge, the convex geometry of polytopes was not been exploited before to compute the Jelonek set.

\section{Relation to the toric non-properness set}\label{sec:clas-tor-Jel} 
We start with the following notations.
\begin{notation}\label{not:variety}

A system $h_1= \cdots = h_r = 0$ of $r$ polynomials in $l$ variables will be written as $h = \underline{0}$, with number of zeros clear from the context. Its common zero locus in $\K^l$ will be denoted by $\mathcal{Z}(h)$, and that in the $l$-torus $T_l$ will be written by $\mathcal{Z}^{\tor}(h)$. The affine space $\K^n$, spanned by points $(x_1,\ldots,x_n)$ will be identified with the set of all projective points $ [x_1:\cdots:x_n:1]$ in $\K P^n$. 
\end{notation} For a given non-degenerate map $f:\K^n\rightarrow\K^n$, one can alternatively describe the Jelonek set in the following way. A point $y\in\K^n$ belongs to $S_f$, if and only if there exists a continuous family of points $\{x(t)\}_{t\in ]0,1[}\subset T_n$ such that $|x(t)|\rightarrow +\infty$ for $t\rightarrow 0$, and $f\left( x(t)\right)\rightarrow y$. Thus, the set $\mathcal{Z}^{\tor}(f-y)$ has less isolated points than $\mathcal{Z}^{\tor}\big(f-y(t)\big)$. Using~\cite[Theorem B]{Ber75}, 

we deduce that~\eqref{eq:sys:0} has a solution $[p]=[p_0,p_1,\ldots,p_n]\in \K P^n\setminus T_n$ that is the limit of $x(t)$ when $t\rightarrow 0$. This solution $[p]\in\K P^n\setminus \K^n$ is called a \emph{strictly unstable solution} to~\eqref{eq:sys:0}. This description shows the following.

\begin{lemma}\label{lem:sol-infty}
The Jelonek set of a non-degenerate map $f:\K^n\rightarrow \K^n$ coincides with the set of points $y\in\K^n$ at which the system~\eqref{eq:sys:0} has a strictly unstable solution in $\K P^n\setminus\K^n$.
\end{lemma}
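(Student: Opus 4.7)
The plan is to derive the iff by combining the curve-based characterization of $S_f$ recalled in the paragraph above the lemma with~\cite[Theorem~B]{Ber75}, which converts limits of torus zeros into projective boundary solutions. Both directions are, in effect, a translation between ``a torus root escapes to infinity along a continuous branch as the coefficient vector deforms to $y$'' and ``the projective closure of the system $f-y=\underline 0$ has a root on the hyperplane at infinity.''

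For the forward direction, if $y\in S_f$ then a continuous family $\{x(t)\}_{t\in\,]0,1[}\subset T_n$ exists with $|x(t)|\to+\infty$ and $f(x(t))\to y$ as $t\to 0$. Extracting a convergent subsequence of $x(t)=[x_1(t):\cdots:x_n(t):1]$ in $\K P^n$, the divergence $|x(t)|\to+\infty$ forces the last homogeneous coordinate of the limit point $[p]$ to vanish, so $[p]\in\K P^n\setminus\K^n$ rather than merely $\K P^n\setminus T_n$. Applying~\cite[Theorem~B]{Ber75} to the deformation $f-y(t)=\underline{0}$ with $y(t):=f(x(t))$ certifies $[p]$ as a boundary solution of~\eqref{eq:sys:0} in the toric compactification of $T_n$ attached to $\underline{\New} f$, which is precisely the notion of strictly unstable solution introduced just above.

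For the converse, suppose~\eqref{eq:sys:0} admits a strictly unstable solution $[p]\in\K P^n\setminus\K^n$. By Bernstein's theorem, such a boundary solution corresponds to a drop in the generic number of isolated torus zeros upon specializing the coefficient vector to $y$, and this drop is realized by a continuous branch $\{x(t)\}_{t\in\,]0,1[}\subset T_n$ of roots of $f-y(t)=\underline{0}$ whose projective limit is $[p]$ as $y(t)\to y$. Since $[p]\notin\K^n$, one has $|x(t)|\to+\infty$, and $f(x(t))=y(t)\to y$, so the curve characterization recalled above gives $y\in S_f$.

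The main subtlety — and the reason~\cite[Theorem~B]{Ber75} is invoked — is that the polynomials $f_i-y_i$ are not homogeneous, so one must interpret solutions of~\eqref{eq:sys:0} on an appropriate compactification of $T_n$ (determined by $\underline{\New} f$) and verify that escaping torus roots land at exactly those boundary points declared strictly unstable, rather than on the lower-dimensional torus strata inside $\K^n\setminus T_n$. Bernstein's theorem handles precisely this point in both directions, and the remaining bookkeeping amounts to tracking the last homogeneous coordinate of the projective limit.
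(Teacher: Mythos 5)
Your proposal is correct and follows essentially the same route as the paper, whose own justification is just the paragraph preceding the lemma: the curve characterization of $S_f$ via families $\{x(t)\}\subset T_n$ escaping to infinity, Bernstein's Theorem B to produce the limit $[p]\in\K P^n\setminus T_n$, and the observation that divergence of $|x(t)|$ forces the last homogeneous coordinate of $[p]$ to vanish (the converse being nearly tautological from the paper's operational definition of a strictly unstable solution as such a limit). You have merely written out both directions more explicitly than the paper does.
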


\begin{example}\label{ex:bivar-non-proper}
The non-properness set of $f:\K^2\rightarrow\K^2$, $(u,v)\mapsto\big(v(u - 1),v^2(u^2 - 3u + 2)\big)$, is $S_f=\{(t^2,t)~|~t\in\K\}\cup\{y_1 = 0\}$. 

The set of solutions $(u,v)\in\K^2$ to $f_1(u,v) - y_1 = f_2(u,v) - y_2=\underline{0}$ satisfies $v(u - 1)$ if $y=(0,0)$. Otherwise, it is equal to 
$\big( (2y_1^2 - y_2)/(y_1^2 - y_2) , (y_1^2 - y_2)/y_1\big)$,

and converges to $[0:y_2^2:0]$, or $[y_1^3:0:0]$ in $\K P^2\setminus T_2$ whenever $ y$ converges to $S_f$. 

On the other hand, the system $au + b = v +  cu^2 + d =0$ has one solution at infinity regardless of the non-zero values of the complex numbers $a,b,c,d$. This solution is thus not strictly unstable. \end{example} 
Parallel to the Jelonek set, we will be studying the following.
\begin{definition}\label{def:unstab-toric-Jel}
The \emph{toric non-properness set} $S^*_f$ of a non-degenerate map $f:\K^n\rightarrow\K^n$ is a subset of $\K^n$ such that for any $y\in S^*_f$, the system~\eqref{eq:sys:0} has a strictly unstable solution in $\K^n\setminus T_n$. 
\end{definition}

In Example~\ref{ex:bivar-non-proper}, the set $S^*_f$ coincides with $\{(2t^2,t)~|~t\in\K\}\cup\{0,0\}$. 
We have the following.

\begin{lemma}\label{lem:sf-ext:non-empty}
The set $S^*_f\cup S_f\subset\K^n$ has dimension at most $n-1$.
\end{lemma}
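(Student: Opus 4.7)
The plan is to treat the two pieces of the union separately. For $S_f$, the bound $\dim S_f \leq n-1$ has already been recalled from \cite{Jel93,Jel02} in the introduction, so no further argument is needed on that side. The entire content of the lemma thus reduces to verifying that $\dim S^*_f \leq n-1$.

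For the toric non-properness set, I would extract from Definition~\ref{def:unstab-toric-Jel} only a weak consequence of the strictly unstable condition: if $y \in S^*_f$, then $f - y = \underline{0}$ admits a solution $p \in \K^n \setminus T_n$, i.e.\ $f(p) = y$ for some $p$ lying on a coordinate hyperplane. Hence $S^*_f \subseteq f(\K^n \setminus T_n) = \bigcup_{i=1}^n f(H_i)$, where $H_i = \{x_i = 0\}$ has dimension $n-1$.

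The image $f(H_i)$ of a polynomial map restricted to an affine hyperplane is constructible for $\K = \C$ (Chevalley) and semi-algebraic for $\K = \R$ (Tarski--Seidenberg), and in either case its dimension is at most $\dim H_i = n-1$. Taking the union over $i$ yields $\dim S^*_f \leq n-1$, and combined with $\dim S_f \leq n-1$ this gives the claim.

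I do not foresee any real obstacle: the strictly unstable content of the definition of $S^*_f$ is not actually needed for the dimension bound, only the set-theoretic inclusion $S^*_f \subseteq f(\K^n \setminus T_n)$. The only mild point to keep track of is the appropriate category (semi-algebraic vs.\ constructible), which ensures the dimension statement applies uniformly for $\K = \R$ and $\K = \C$.
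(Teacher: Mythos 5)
Your argument is correct, and it takes a genuinely different route from the paper. The paper fixes $y\in S^*_f\setminus S_f$ and runs a perturbation argument: solutions of $f-y=\underline{0}$ in $\K^n\setminus T_n$ come from the over-determined system~\eqref{eq:sys1} ($n$ equations in $n-1$ variables), which generically has finitely many solutions, and a small generic translation of $y$ destroys them; iterating over coordinates shows that $\K^n\setminus(S^*_f\cup S_f)$ is open and dense, whence the dimension bound. You instead prove the set-theoretic containment $S^*_f\subseteq f(\K^n\setminus T_n)=\bigcup_i f(H_i)$ and invoke Chevalley/Tarski--Seidenberg to bound the dimension of each image. Your containment is legitimate: a strictly unstable solution in $\K^n\setminus T_n$ is by construction a \emph{finite} limit $p$ of toric solutions $x(t)$ of $f-y-t\mu=\underline{0}$, so continuity of $f$ gives $f(p)=y$ (the paper's own proof implicitly uses the same fact when it passes to the system~\eqref{eq:sys1}). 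Your version is shorter, avoids the genericity and non-degeneracy discussion entirely, and has the advantage of exhibiting an explicit semi-algebraic (resp.\ constructible) superset of $S^*_f$ of dimension at most $n-1$, which makes the meaning of ``dimension'' unambiguous; the paper's version, on the other hand, delivers the slightly different conclusion that the complement of $S^*_f\cup S_f$ is open and dense, which is the form in which the lemma is actually invoked later (in the proof of Proposition~\ref{prop:solutions-infty1}, to choose a direction $\mu$ whose ray meets $S^*_f\cup S_f$ only at $y$). That consequence also follows from your containment, so nothing downstream is lost.
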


\begin{proof}
From~\cite{Jel93,Jel02}, we have $\dim S_f\leq n-1$, and let $y$ be a point in $S^{*}_f\setminus S^*_f\cap S_f$. To compute a solution to~\eqref{eq:sys:0} in $\K^n\setminus T_n$, we set at least one of the coordinates of $x$ to be zero. This reduces the problem to computing the set of solutions in $\K^{n-1}$ to the over-determined system 
\begin{equation}\label{eq:sys1}
f_1(x_1,\ldots,x_{n-1},0) - y_1 =\cdots = f_n(x_1,\ldots,x_{n-1},0) - y_n = 0.
\end{equation} On the other hand, since $f$ is non-degenerate, the set of points $x\in \K^n$ at which the Jacobian matrix of $f$ has rank $n-1$, forms a hypersurface $J_f$ in $\K^n$. Hence, the image $f(J_f)\subset\K^n$ is an algebraic set whose dimension is at most $n-1$. Then, for any open neighborhood $\mathcal{U}_y$ of $y$, there exists a point $y'\in\mathcal{U}_y\setminus f(J_f)$ such that $f^{-1}(y')$ has a finite number of points in $\K^n$. This implies that~\eqref{eq:sys1} has finitely-many solutions in $\K^{n-1}$. Now, adding any small enough number $\varepsilon_1\in\C^*$ to the first equation, the above system will have no more solutions in $\K^{n-1}$. Thus, the point $b=y' + (\varepsilon_1,0,\ldots,0)$ satisfies $f^{-1}(b)\cap\big(\K^n\setminus T_n \big)=\emptyset$. We can iterate the above argument to show that there exists a neighborhood $\mathcal{U}_b\subset\mathcal{U}_y$ such that for any $c\in\mathcal{U}_b$, we have $f^{-1}(c)\cap\big(\K^n\setminus T_n \big)=\emptyset$. Since all of the above holds true for any such $y\in S^{*}_f\setminus S^*_f\cap S_f$, we deduce that $\K^n\setminus S^*_f\cup S_f$ is an open \emph{dense} subset of $\K^n$, and we are done.
\end{proof}

\section{Polynomials from faces}\label{sec:faces} We start by recalling the following important operation in convex geometry.

\begin{definition}\label{def:Minkowski}
The \emph{Minkowski sum} of any given sets $A,B\subset\R^n$, is formed by adding each vector in $B$ to each vector in $A$, i.e., the set $A+B =\left\lbrace a+b~|~a\in A,~b\in B\right\rbrace.$\end{definition} We assume in what follows that $f:\K^n\rightarrow\K^n$ satisfies $f(\underline{0})\in T_n$. Namely, for generic $y\in\K^n$, and $i=1,\ldots,n$, the support $\supp f_i$ coincides with $\supp(f_i - y_i)$ and contains the origin. The convention here is that $P\in \K[x_1,\ldots,x_n]$ is constant (possibly identically zero), if and only if $\supp P=(0,\ldots,0)\in\N^n$. 
Requiring $f(\underline{0})\in T_n$ is not a restriction as it is a choice of maps up to translation. On the other hand, requiring the map to be non-degenerate implies that each polynomial in $f$ has positive degree, and that $\underline{\New} f$ is \emph{independent} in the Minkowski sense (see~\cite[\S 2]{Kho16} for details). This means that for every non-empty $J\subset\{1,\ldots,n\}$, we have \[\sum_{i\in J}\dim \New f_i\geq |J|.\] 
\noindent Introduced by Minkowski, independence of Newton polytopes was proven to be a necessary and sufficient condition for a general system of complex polynomials to have a finite number of solutions (see~\cite{Ber75}, and~\cite[\S 3]{Kho16}). 

\subsection{Minimized tuples and geometry of polytopes}\label{subsec:types-faces} If $\gamma,\delta$ are two tuples of $\underline{\New} f$ such that $\delta_i\subset\gamma_i$ for $i=1,\ldots,n$, we say that $\delta$ is a \emph{sub-tuple} of $\gamma$ (see Example~\ref{ex:minimized-tuples}). The latter inclusion property is denoted as $\delta\preceq\gamma$. The \emph{tuple dimension} $\tdi (\gamma)$ of $\gamma$ is the dimension of the Minkowski sum $\gamma_1+\cdots+\gamma_n\subset\R^n$. A tuple $\gamma$ above is said to be an \emph{$m$-tuple} if $\tdi(\gamma)=m$. The tuples that we are interested in are identified as follows. For a given vector $\alpha\in\Q^n$, and a convex polytope $A\subset\R^n$, consider the set \[A_\alpha=\left\lbrace a\in A~|~\langle a,\alpha\rangle ~\text{reaches its minimum}\right\rbrace.\] By intersecting $A$ with lines in $\R^n$ directed by $\alpha$, one can deduce that $A_\alpha=A\cap H_\alpha$, where $H_\alpha$ is a hyperplane in $\R^n$ whose normal vector is $\alpha$. This makes $A_\alpha$ a face of $A$ (recall the definition of a face). We say that $\alpha\in\Q^n$ \emph{minimizes} $A_\alpha$, and the latter is thus \emph{minimized by $\alpha$}. More generally, any vector $\alpha$ as above minimizes a unique face $\gamma_i$ in each member $\New f_i$ of $\underline{\New} f$. This determines uniquely an $m$-tuple $\gamma$ of $\underline{\New} f$, which will be \emph{minimized by} $\alpha$.
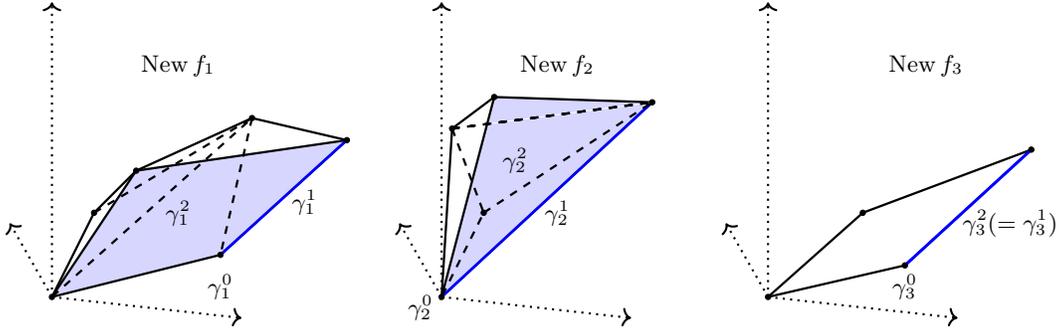
\begin{figure}
\centering
\begin{tikzpicture} [scale=1.4]
    \tikzstyle{conefill} = [fill=blue!20,fill opacity=0.8]          
    \tikzstyle{ann} = [fill=white,font=\footnotesize,inner sep=1pt] 
    \tikzstyle{ghostfill} = [fill=white]	
    \tikzstyle{ghostdraw} = [draw=black!50]					
	\tikzstyle{ann1} = [font=\footnotesize,inner sep=1pt] 
         

    \filldraw[conefill,line width=0.3 mm ](-5.6,0)--(-4,0.4)
    						--(-3+0.2,1.7-0.21)--(-4.8,1.2)-- cycle;	

	\draw[ line width=0.36 mm, color = blue] (-4,0.4)--(-3+0.2,1.7-0.21);    
    		
	\draw[ line width=0.3 mm](-5.6,0)--(-5.2,0.8)
    								--(-4.8,1.2);						
    						
    \draw[dashed, line width=0.3 mm](-5.2,0.8)--(-3.7,1.7)
    								--(-5.6,0);							
    								
    \draw[line width=0.3 mm](-4.8,1.2)--(-3.7,1.7)--(-3+0.2,1.7-0.21);					 						
    
    \draw[dashed, line width=0.3 mm](-3.7,1.7)--(-4,0.4);   
    
    \draw[arrows=->,dotted, line width=0.8 pt](-5.6,0)--(-5.6,2.8);							
    \draw[arrows=->, dotted, line width=0.8 pt](-5.6,0)--(-6,0.7);						
    \draw[arrows=->,dotted, line width=0.8 pt](-5.6,0)--(-3.8,-0.2);						    
    

    \node[ann1] at (-4.4,0.8) {$\gamma_1^2$};
    \node[ann1] at (-3.2,0.9) {$\gamma_1^1$};
	\node[ann1] at (-4.0,0.1) {$\gamma_1^0$}; 
	\node[ann1] at (-4.4,2.2)   {$\New f_1$};             

\begin{scope}[xshift=0.3cm]

    \filldraw[conefill,line width=0.3 mm ](-2.2,0)--(2-2.2 ,1.7+ 0.15)    
   						--(-1.7 ,1.2+0.7 )--cycle;						
   						
	\draw[ line width=0.36 mm, color = blue](-2.2,0)--(2-2.2 ,1.7+ 0.15);       						
    						
	\draw[line width=0.3 mm] (0-2.2,0)--(0.1-2.2,1.6);	    						

	\draw[ line width=0.3 mm] (0.1-2.2,1.6)--(-1.7 ,1.2+0.7 );						
    \filldraw[ghostfill, dashed, line width=0.3 mm ](0.1 -2.2,1.6)
    						--(2-2.2 ,1.7+ 0.15)--cycle;
    \draw[dashed, line width=0.3 mm] (0.1-2.2,1.6)--(0.4-2.2,0.8)--(0-2.2,0);	

    \draw[dashed, line width=0.3 mm] (0.4-2.2,0.8)--(2-2.2 ,1.7+ 0.15); 	
    
    \draw[arrows=->,dotted, line width=0.8 pt](-2.2,0)--(-2.2,2.8);								
    \draw[arrows=->, dotted, line width=0.8 pt](0-2.2,0)--(-0.4-2.2,0.7);						
    \draw[arrows=->,dotted, line width=0.8 pt](0-2.2,0)--(1.8-2.2,-0.2);						


	\node[ann1] at (-1.5,1.3) {$\gamma_2^2$};  
	\node[ann1] at (-1.1,0.8) {$\gamma_2^1$};
	\node[ann1] at (-2.4,-0.1) {$\gamma_2^0$};
	\node[ann1] at (-1.1,2.2)   {$\New f_2$};             


\draw [fill] (2-2.2 ,1.7+ 0.15) circle [radius=0.025];
\draw [fill] (0.1-2.2,1.6) circle [radius=0.025];
\draw [fill] (0.4-2.2,0.8) circle [radius=0.025];
\draw [fill] (-1.7 ,1.2+0.7 ) circle [radius=0.025];
\draw [fill] (-2.2,0) circle [radius=0.025];
\end{scope}
         

    \draw[ line width=0.3 mm] (-2.2 +3.4   ,  0  )--(-2.2 +3.4 +1.3  ,0 + 0.3  );
    \draw[ line width=0.36 mm, color = blue] (-2.2 +3.4 +1.3  ,0 + 0.3 )--(-2.2 +3.4 +1 +1.5 ,0 +1.4 );    
    \draw[ line width=0.3 mm] (-2.2 +3.4   ,  0  )--(-2.2 +3.4 +0.9  ,  0 +0.8 ) -- (-2.2 +3.4 +1 +1.5 ,0 +1.4 );

\draw[ line width=0.36 mm, color = blue] (-2.2 +3.4 +1.3  ,0 + 0.3 )--(-2.2 +3.4 +1 +1.5 ,0 +1.4 );

\begin{scope}[xshift=0cm]
    \draw[arrows=->,dotted, line width=0.8 pt](-2.2 +3.4,0)--(-2.2+3.4,2.8);							
    \draw[arrows=->, dotted, line width=0.8 pt](0-2.2+3.4,0)--(-0.4-2.2+3.4,0.7);						
    \draw[arrows=->,dotted, line width=0.8 pt](0-2.2+3.4,0)--(1.8-2.2+3.4,-0.2);						

\end{scope}    

	\node[ann1] at (3.5,0.7) {$\gamma_3^2(=\gamma_3^1)$};
	\node[ann1] at (2.5,0.1) {$\gamma_3^0$};  
	\node[ann1] at (2.7,2.2)   {$\New f_3$};                
	

\draw [fill] (-5.6,0) circle [radius=0.025];
\draw [fill] (-2.2 +3.4,0) circle [radius=0.025];

\draw [fill] (-2.2 +3.4 +1 +1.5 ,0 +1.4 ) circle [radius=0.025];
\draw [fill] (-2.2 +3.4 +0.9  ,  0 +0.8 ) circle [radius=0.025];
\draw [fill] (-2.2 +3.4 +1.3  ,0 + 0.3 ) circle [radius=0.025];

\draw [fill] (-3+0.2,1.7-0.21) circle [radius=0.025];
\draw [fill] (-3.7,1.7) circle [radius=0.025];

\draw [fill] (-4.8,1.2) circle [radius=0.025]; 

\draw [fill] (-4,0.4) circle [radius=0.025]; 
\draw [fill] (-5.2,0.8) circle [radius=0.025];
 
\end{tikzpicture} 
\caption{An example of the tuple $\underline{\New} f$, together with a flag of minimized tuples for $n=3$.} \label{fig:minimized-tuples} 
\end{figure}
\begin{example}\label{ex:minimized-tuples}
In Figure~\ref{fig:minimized-tuples}, more than one direction of vectors minimize the $1$-triple $\gamma^1 = \big(\gamma^1_1,\gamma^1_2,\gamma^1_3\big)$. This is a sub-triple of the $2$-triple $\gamma^2 =\big(\gamma^2_1,\gamma^2_2,\gamma^2_3\big)$ (in transparent blue). The latter is minimized by only one vector in $\R^n$ up to scaling. 
\end{example}

A \emph{convex polyhedral cone} (we will simply say \emph{cone}) $C$ in $\R^n$ is given by a set of vectors $v_1,\ldots,v_r\in\R^n$, and defined as $\left\lbrace c_1v_1+\cdots+c_rv_r~|~c_i\in\R_{\geq 0}\right\rbrace$. Denote by $\Int C$ the relative interior of a cone $C$ above, that is $\Int C=\left\lbrace c_1v_1+\cdots+c_rv_r~|~c_i\in\R_{>0}\right\rbrace$. A \emph{face $C'$ of the cone} $C$ (or simply, face) is a cone resulting from the intersection of $C$ with a hyperplane $H_0$, passing through the origin of $\R^n$, and minimizing the value of some linear function on $C$ (see Figure~\ref{fig:cones}). An \emph{$m$-cone} is a cone of dimension $m$.

\begin{definition}\label{def:fan}
A \emph{fan} $F$ is a finite set of cones in $\R^n$ such that (see Figure~\ref{fig:2fan+poly})

\begin{itemize}

	\item if $\sigma$ is a cone in $F$, and $\tau$ is a face of $\sigma$, then $\tau$ is in $F$, and
	
	\item if $\sigma$, and $\sigma'$, are two cones in $F$, then $\sigma\cap\sigma'$ is a face of both $\sigma$, and $\sigma'$. \end{itemize}\end{definition} 

\begin{proposition}\label{prop:normal-fan} Given any $m$-tuple of $\underline{\New} f$, the set of vectors minimizing it forms an $(n-m)$-cone in $\R^n$. The union of all such cones forms a fan in $\R^n$. Moreover, an $(n-m)$-cone $C$ is a face of an $(n-d)$-cone $\tilde{C}$ if and only if the $d$-tuple $\delta$ of $\underline{\New} f$, minimized by $\tilde{C}$, is a sub-tuple of the $m$-tuple $\gamma$ minimized by $C$.
\end{proposition}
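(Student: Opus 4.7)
The plan is to recognize the statement as a reformulation, in the language of minimized $m$-tuples, of the classical identification of the normal fan of the Minkowski sum $\Sigma := \New f_1 + \cdots + \New f_n$ with the common refinement of the normal fans of its summands. For each face $\gamma_i$ of $\New f_i$ I would introduce its normal cone
$$N(\gamma_i) := \{\alpha \in \R^n \mid (\New f_i)_\alpha \supseteq \gamma_i\},$$
a closed convex polyhedral cone of dimension $n - \dim \gamma_i$ whose relative interior consists of those $\alpha$ with $(\New f_i)_\alpha = \gamma_i$. Unwinding the definition of ``minimizing'', the locus of $\alpha$ minimizing a tuple $\gamma = (\gamma_1,\ldots,\gamma_n)$ is then the relative interior of $C(\gamma) := \bigcap_{i=1}^n N(\gamma_i)$.

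Next, I would apply the support-function identity $h_{P+Q} = h_P + h_Q$, with $h_P(\alpha) := \min_{x\in P}\langle x,\alpha\rangle$, to deduce
$$\Sigma_\alpha \;=\; (\New f_1)_\alpha + \cdots + (\New f_n)_\alpha$$
for every $\alpha \in \R^n$. This yields both the unique Minkowski decomposition of each face $F$ of $\Sigma$ as $F = \gamma_1+\cdots+\gamma_n$ with $\gamma_i$ a face of $\New f_i$, and the identity $N_\Sigma(F) = \bigcap_i N(\gamma_i) = C(\gamma)$. Hence $\dim C(\gamma) = n - \dim F = n - \tdi(\gamma) = n - m$, which is the first assertion, and the collection $\{C(\gamma)\}$ coincides with the normal fan of $\Sigma$---a fan in $\R^n$---which is the second.

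For the third assertion I would combine the uniqueness of Minkowski decomposition of faces (i.e.\ $F_\delta := \sum_i\delta_i$ is a face of $F_\gamma := \sum_i\gamma_i$ if and only if $\delta_i \subseteq \gamma_i$ for every $i$, that is, $\delta \preceq \gamma$) with the order-reversing bijection between faces of $\Sigma$ and cones of its normal fan ($F_\delta \subseteq F_\gamma$ if and only if $C(\gamma) \subseteq C(\delta)$). Since both $C(\gamma)$ and $C(\delta)$ lie in the fan, the set-theoretic inclusion $C(\gamma) \subseteq C(\delta)$ is the same as $C(\gamma)$ being a face of $C(\delta)$, completing the equivalence. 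The main obstacle is the careful justification of the support-function identity and the uniqueness of the Minkowski decomposition of faces---both classical and both consequences of $h_{P+Q} = h_P + h_Q$---particularly when some $\New f_i$ fails to be full-dimensional, in which case $N(\gamma_i)$ carries a nontrivial lineality subspace; one must verify that intersecting such cones does not cause the codimension count to degenerate, which amounts to a routine dimension argument using the affine hulls of the $\New f_i$.
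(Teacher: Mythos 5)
Your proposal is correct and follows essentially the same route as the paper: both identify the minimizing cone of a tuple with the intersection of the normal cones of its members (equivalently, a cone of the common refinement of the individual inner normal fans), and both use the additivity of the support function to identify this with the normal cone of the face $\gamma_1+\cdots+\gamma_n$ of the Minkowski sum, from which the dimension count and the order-reversing face/sub-tuple correspondence follow. The only difference is presentational — the paper rebuilds the inner normal fan from vertices and facets before taking the common refinement, while you invoke the classical facts directly.
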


\begin{notation}
An $m$-face of a polytope $\Delta\subset\R^n$ is a face of $\Delta$ having dimension $m$. 
The set of vectors in $\R^n$, minimizing a tuple $\gamma$ of $\underline{\New} f$ is called the \emph{minimizing cone} of $\gamma$. The fan in Proposition~\ref{prop:normal-fan} will be called the \emph{inner normal fan of $\underline{\New} f$}, and will be denoted by $\mathcal{F}(f)$. 
\end{notation}

\begin{example}\label{ex:min-cones}
Figure~\ref{fig:2fan+poly} represents an example of an inner normal fan of a couple of Newton polytopes in dimension two. For dimension three, Figure~\ref{fig:minimized-tuples} represents the $1$-triple $\gamma^1$, and the $2$-triple $\gamma^2$, respectively. They have $C^2$, and $C^1$ in $\R^3$, of Figure~\ref{fig:cones} as their respective minimizing cones. Moreover, we have $\gamma^1\preceq\gamma^2$, and $C^1$ is a face of $C^2$. 
\end{example}
\begin{proof}[Proof of Proposition~\ref{prop:normal-fan}]
To any polytope $A\in\R^n$, we associate a fan $\mathcal{F}(A)$ as follows. For each vertex $v\in \Delta$, consider the set of all \emph{facets} (i.e. $(n-1)$-faces) of $A$ adjacent to $v$. For each such a facet, we pick one minimizing vector, and collect them into a set $N_v$ in $\R^n$. The non-negative span of $N_v$ is a cone $C_v\subset\R^n$. Since $v$ is a vertex of $A$, any $m$ vectors in $N_v$ span an $m$-dimensional space in $\R^n$. Thus, the dimension of $C_v$ is equal to $n$. 

As for higher-dimensional faces, to each $k$-face $\kappa\subset\Delta$ adjacent to $v$, we associate a $(n-k)$-cone $C_\kappa\subset\R^n$ as follows. The face $\kappa$ coincides with the intersection of $n-k$ distinct facets $\xi_1,\ldots,\xi_{n-k}$ of $A$ that are also adjacent to $v$. We thus associate to $\kappa$ the cone $C_\kappa$ spanned by the elements $\alpha_1,\ldots,\alpha_{n-k}\in N_v$ that minimize $\xi_1,\ldots,\xi_{n-k}$ respectively. Hence, the resulting cone $C_\kappa$ has dimension $n-k$. 

 From the description above, we deduce that the union of all such cones, corresponding to faces $\kappa\subset A$ adjacent to $v$, constitutes the cone $C_v$. We thus define $\mathcal{F}(A)$ to be the union of all cones $C_v$, where $v$ runs through all vertices of $A$. This is a rational fan called the \emph{inner normal fan of $A$}. A more detailed description of the inner normal fan can be found in~\cite[pp. 26]{Ful93}. 

Now we are ready to prove the result for $\underline{\New} f$. Denote by $\mathcal{F}(f)$ the fan defined as the common refinement of the inner normal fans $\mathcal{F}\big(\New f_1\big),\ldots,\mathcal{F}\big(\New f_n\big)$, and let $\gamma$ be a minimized $m$-tuple of $\underline{\New} f$. We proceed by proving the three following properties.

\emph{The correspondence:} consider the intersection $C_\gamma= C_{\gamma_1}\cap\cdots\cap C_{\gamma_n}$ of cones $C_{\gamma_i}$ corresponding to $\gamma_i\subset \New f_i$ as in the paragraph above. From the definition of common refinement of fans, the set $C_\gamma$ is a cone in $\mathcal{F}(f)$. Moreover, from the description above, the cone $C_\gamma$ coincides with the set of all vectors $\alpha\in\R^n$ minimizing the tuple $\gamma$. This defines a bijective correspondence between cones in $\mathcal{F}(f)$ and minimized tuples of $\underline{\New} f$. 

\emph{The inclusion property:} Assume that there exists a minimized sub-tuple $\delta$ of $\gamma$. Then, there exists cones $C_{\delta_1},\ldots,C_{\delta_n}\subset\R^n$ such that $C_{\gamma_i}\subset C_{\delta_i}$, $\forall~i$. Moreover, from the above description, the minimizing cone $C_\delta$ coincides with the set $C_{\delta_1}\cap\cdots\cap C_{\delta_n}$, making it satisfy the inclusion $C_\gamma\subset C_\delta$. Since both latter sets are cones in $\mathcal{F}(f)$, we get $C_\gamma$ is a face of $ C_\delta$. 

As for the other direction, assume that there exists a cone $\tilde{C}$ of $\mathcal{F}(f)$, such that $C_\gamma$ is a proper face of $\tilde{C}$. Therefore, there exists a minimized tuple $\delta$ of $\underline{\New} f$ such that $C_\delta \equiv \tilde{C}$. This implies that for some $i$, we have $C_{\gamma_i}\subsetneq C_{\delta_i}$. Since both latter sets are cones in the same inner normal fan of $\New f_i$, the face $\delta_i$ is also face of the polytope $\gamma_i$. As for indexes $i\in\{1,\ldots,n\}$ such that $C_{\gamma_i}\equiv C_{\delta_i}$, using the same argument we get $\delta_i\equiv\gamma_i$. This shows that $\delta\preceq \gamma$. 

\emph{Dimension property:} Recall that a minimized face, with respect to a vector $\alpha\in\R^n$ of a polytope $A\subset\R^n$ is a subset $A_\alpha$ of $A$ in which the minimum of a linear function $\phi_\alpha: \R^n\rightarrow\R$, restricted to $A$, is reached at $A_\alpha$. This implies that $\phi_\alpha(A +B) = \phi_\alpha(A) + \phi_\alpha(B)$ for any two polytopes $A,B\subset\R^n$. Therefore, the vector $\alpha\in\R^n$ minimizes the tuple $\gamma$ if and only if $\alpha$ minimizes the set $\gamma^\oplus=\gamma_1+\ldots+\gamma_n $ of \[\New^\oplus f=\New f_1 +\cdots+ \New f_n\] as well. Note that $\gamma^\oplus$ is a face of $\New^\oplus f$. We deduce that the cone $C_\gamma$ of $\mathcal{F}(f)$, minimizing $\gamma$ coincides with the cone $C_{\gamma^\oplus}$, minimizing $\gamma^\oplus$. This makes $C_{\gamma^\oplus}$ to be a cone in the inner normal fan of $\New^\oplus f\subset\R^n$, minimizing $\gamma^\oplus$. The discussion in the beginning of this proof shows that the dimension of this cone is equal to $n-\dim \gamma^\oplus = n-\tdi\gamma$. This finishes the proof.
\end{proof} 

\subsection{Types of tuples}\label{subsec:tuple-types} Only some of the minimized tuples of $\underline{\New} f$ are relevant to our results. We start by defining one of those types. An \emph{origin face} of a polytope in $\R^n$ is a (not necessarily proper) face of that polytope having $\{\underline{0}\}$ as a vertex. If one of the members of a minimized tuple $\gamma$ of $\underline{\New} f$ is an origin face, then $\gamma$ is called a \emph{semi-origin} tuple. An \emph{origin} (resp. \emph{strictly semi-origin}) tuple is a semi-origin one whose all (resp. not all) of its members are origin faces. 
\begin{example}\label{ex:semi-origin}
Each of the triples $\gamma^0$, $\gamma^1$, and $\gamma^2$ appearing in Figure~\ref{fig:minimized-tuples} is a strictly semi-origin triple of $\underline{\New} f$. This is because $\gamma^0_2$, $\gamma^1_2$, and $\gamma^2_2$ are origin faces of $\New f_2\subset\R^3$.
\end{example} Minimized tuples $\gamma$ of $\underline{\New} f$ that are \emph{not} semi-origin, can be one of two types. Denote by $C_\gamma\subset\R^n$ the minimizing cone of $\gamma$. Choose any point $\alpha'\in\Q^n$, contained in one of the faces of $C_\gamma$ that have dimension $1$. Set $\alpha'=\alpha$ if $C_\gamma$ is already a $1$-cone. Then, Proposition~\ref{prop:normal-fan}, shows that $\alpha'$ minimizes an $(n-1)$-tuple $\gamma'$ of $\underline{\New} f$ such that $\gamma\preceq\gamma'$. For $i=1,\ldots,n$, denote by $H'_{i}\subset\R^n$ the hyperplane whose normal is $\alpha'$ and containing the $i$-th member $\gamma'_i$ of $\gamma'$, and by $H'^0_i$ the hyperplane, parallel to $H'_i$, and containing the point $\underline{0}$. 
 
The tuple $\gamma$ is said to be \emph{almost semi-origin} if there exists a pair $(\alpha',\gamma')$ as above, such that for some $i\in\{1,\ldots,n\}$, there are no integer points strictly between $H'^0_i$, and $H'_i$. 

\begin{example}\label{ex:almost-semi-origin}
The vector $(0,-1)$ minimizes the $1$-tuple $\gamma$ of horizontal edges of both triangles in Figure~\ref{fig:cones} on the right. Since there are no integer points between the origin and the shortest edge in $\gamma$, this tuple is almost semi-origin.
\end{example}

\subsection{Monomial change of coordinates}\label{subsec:mon-change} In this part, we relate the polyhedral description in Proposition~\ref{prop:normal-fan} above to some automorphisms of $T_n$ that will be crucial for our main results. The below notations are burrowed from~\cite{Ber75}. We consider the change of variables 
 \[x_j = \prod_{i=1}^nz_i^{u_{ij}},\] involving any integer matrix $U = (u_{ij})_{i,j=1,\ldots,n}$ satisfying $\det U = \pm 1$. This transformation is written as $x = z^U$, and it induces an isomorphism \[U^\star:\K[x_1,\ldots,x_n]\rightarrow \K[z_1^{\pm 1},\ldots,z_n^{\pm 1}],\] taking the monomial $x^a$ to $z^{Ua}$. Hence for any $h\in\K[x_1,\ldots,x_n]$ we have
\begin{equation}\label{eq:supp-supp}
\supp(U^\star h) = U(\supp h),
\end{equation} thus making the two sets $\mathcal{Z}^{\tor}(f - y) $, and $\mathcal{Z}^{\tor}\big(U^\star( f - y)\big)$ isomorphic. Indeed, since $\det U=\pm 1$, the map $T_n\rightarrow T_n$, $z\mapsto z^U$ is one-to-one. In particular, we have $|\mathcal{Z}^{\tor}(f - y)| = \big|\mathcal{Z}^{\tor}\big(U^\star( f - y)\big)\big|$ if this set is finite. Our focus is on a particular type of change of variables.

Consider an $n$-cone $C^{n}\subset\R^n$ as in Proposition~\ref{prop:normal-fan} minimizing a $0$-tuple $\gamma^0$ of $\underline{\New} f$. Let \begin{equation}\label{eq:cones-tower}
 C^{1}\subset \cdots\subset C^{n},
\end{equation} be any given choice of a flag of cones in $\mathcal{F}(f)$ such that for $j=1,\ldots,n-1$, the $j$-cone $C^j$ is a face of $C^{j+1}$. Proposition~\ref{prop:normal-fan} shows that this produces a flag of tuples \[\gamma^0\preceq\cdots\preceq\gamma^{n-1}\] of $\underline{\New} f$, such that $\gamma^{n-j}$ is an $(n-j)$-tuple, minimized by $C^{j}$ (see e.g. Figure~\ref{fig:cones} on the left).

\begin{example}\label{ex:flag-cones}
The $2$-cone $C^{2}$ appearing in Figure~\ref{fig:cones} minimizes the $1$-tuple $\gamma^1$ in Figure~\ref{fig:minimized-tuples}. Moreover, the $1$-cone $C^{1}$ minimizes the $2$-tuple $\gamma^2$, and the $3$-cone minimizes the $0$-tuple $\gamma^0$.
\end{example}

\begin{figure}
\centering
\begin{tikzpicture} [scale=1.5]
    \tikzstyle{conefill1} = [fill=red!20,fill opacity=0.8]          
    \tikzstyle{conefill} = [pattern=vertical lines, pattern color=blue]          
    \tikzstyle{ann} = [fill=white,font=\footnotesize,inner sep=1pt] 
    \tikzstyle{ghostfill} = [fill=white]	
    \tikzstyle{ghostdraw} = [draw=black!50]					
	\tikzstyle{ann1} = [font=\footnotesize,inner sep=1pt] 

\begin{scope}[xshift = -4.2cm]

	\draw[ line width=0.5 mm, dashed] (0,0)--(-0.75,1.5);
	
	\draw[arrows=->, line width=0.36 mm, ] (-0.75,1.5)--(-1.2,2.4);   
	
	\draw[ line width=0.36 mm, ] (0,0)--(-2,1.4);    
	
	\draw[ line width=0.36 mm, ] (0,0)--(-0.5,1.5);    	
	
	\filldraw[conefill,line width=0.0 mm ](-1.2,2.4)--(0,0)--(-2,1.4);	
	
	\filldraw[conefill1,line width=0.0 mm ](-1.2,2.4)--(0,0)--(-0.5,1.5);	
	
	\filldraw[conefill1,line width=0.0 mm ](-2,1.4)--(0,0)--(-0.5,1.5);	
    		
	\draw [fill] (0,0) circle [radius=0.025];	
	

	\node[ann1] at (-1.7,2)   {$C^{2}$};	
	\node[ann1] at (-1.1,2.6)   {$C^{1}$};	
	
\end{scope}	

\begin{scope}[xshift = -2 cm]


	\draw[arrows=->,line width=0.2 mm, dotted] (0,0)-- (0,1.5); 
	
	\draw[arrows=->,line width=0.2 mm, dotted] (0,0)-- (1.5,0);



	\draw [fill, color = blue] (0,0) circle [radius=0.03];	
	
	\draw [fill, color = blue] (0,0.5) circle [radius=0.03];	
	
	\draw [fill, color = blue] (0.5,0.5) circle [radius=0.03];	
	
	
	\draw[ line width=0.35 mm, color = blue] (0,0)--(0,0.5) -- (0.5, 0.5) -- cycle;
	
\end{scope}	
	

	\draw[arrows=->,line width=0.2 mm, dotted] (0,0)-- (0,1.5); 
	
	\draw[arrows=->,line width=0.2 mm, dotted] (0,0)-- (1.5,0);



	\draw [fill] (0,0) circle [radius=0.03];	
	
	\draw [fill] (0,1) circle [radius=0.03];
	
	\draw [fill] (0.5,1) circle [radius=0.03];	
	
	\draw [fill] (1,1) circle [radius=0.03];
	
	\draw [fill] (0,0.5) circle [radius=0.03];	
	
	\draw [fill] (0.5,0.5) circle [radius=0.03];		
	
	
	\draw[ line width=0.35 mm] (0,0)--(0,1) -- (1, 1) -- cycle;
	

	\node[ann1] at (-2.2,0.25)   {$\gamma_1$};
	\node[ann1] at (-0.2,0.5)   {$\gamma_2$};
	
	\node[ann1] at (-1.7,0.7)   {$\delta_1$};
	\node[ann1] at (0.5,1.2)   {$\delta_2$};
	
	\node[ann1] at (-1.5,0.25)   {$\omega_1$};
	\node[ann1] at (0.7,0.5)   {$\omega_2$};
		
\end{tikzpicture}
\caption{On the left: The flag of cones corresponding to the flag of tuples appearing in Figure~\ref{fig:minimized-tuples}. On the right: The Newton polytopes of polynomials respective $f_1 - y_1$ and $f_2 - y_2$ from Example~\ref{ex:bivar-non-proper}}\label{fig:cones}
\end{figure}
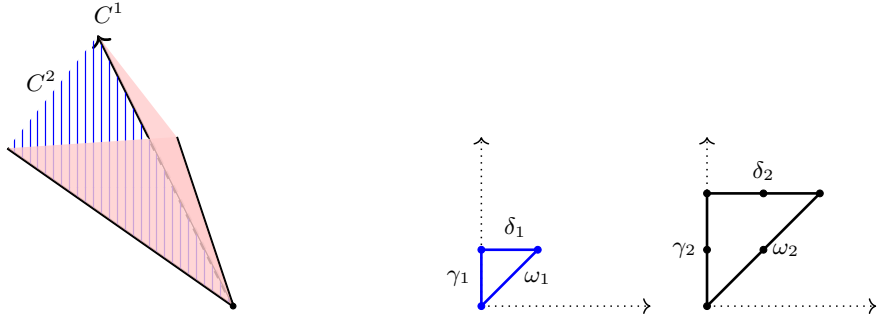

We thus obtain the inclusion $\gamma^{j-1}_1 +\cdots + \gamma^{j-1}_n\subset \gamma^j_1 +\cdots + \gamma^j_n$ for the Minkowski sums corresponding to all elements in any $j$-tuple $\gamma^j$.  

Therefore, there exists a flag of linear subspaces $L^0\subset L^1\subset \cdots\subset L^n = \R^n$ such that $L^j$ has dimension $j$, and contains $ \gamma_1^{j}+\cdots + \gamma_n^{j}$. The set $L^0$ is the point $\gamma^0_1 +\cdots + \gamma^0_n$, and will thus be regarded as a vector in $\R^n$. Now, let $\tilde{e}_1$ be a vector in $\R^n$ directing $L^1$ so that $\gamma^{1}_1 +\cdots + \gamma_n^{1} +\left\lbrace- L^0\right\rbrace$ is contained in $\R_+ \tilde{e}_1$. More generally, each $\gamma^j_1+\cdots+\gamma_n^j+\left\lbrace-L^0\right\rbrace$ lies inside a closed half-space $H_j=\left(L^{j}\setminus L^{j-1}\right)\subset L^j$. 

This allows us to construct the new basis $\tilde{e}$ by defining recursively vectors $\tilde{e}_j\in H_{j}$ as follows.

\begin{enumerate}[label=(\roman*)]

	\item \label{it:cond:rel-int} \emph{the set $\gamma^j_1+\cdots+\gamma_n^j + \left\lbrace -L^0\right\rbrace$ belongs to the cone} $\left\lbrace c_1\tilde{e}_1+\cdots+c_j\tilde{e}_j~|~c_k\in\R_{\geq 0}\right\rbrace$,

	\item \label{it:cond:integer} \emph{the vector $\tilde{e}_j$ has integer entries}, and
			
	\item \label{it:cond:volume} \emph{the vectors $\tilde{e}_1,\ldots,\tilde{e}_j$ span the lattice $L^j\cap\Z^n$ as a $\Z$-basis}.
	
\end{enumerate}

 Such basis $\tilde{e}_1,\ldots,\tilde{e}_n\in\R^n$ exists by choosing $\tilde{e}_j\in\Z^n$ so that the angle between $\tilde{e}_j$, and the space $L^{j-1}$ is large enough. Namely, we increase continuously this angle until Condition~\ref{it:cond:rel-int} is satisfied. It will remain satisfied with any such additional angle increase. We thus make it so that the parallelepiped, in $L^j$, spanned by the vectors $\tilde{e}_1,\ldots,\tilde{e}_{j}$ does not contain integer points other than its vertices. This guarantees condition~\ref{it:cond:volume}. We represent this construction as a linear map $U:\R^n\rightarrow\R^n$ taking $\tilde{e}$ to $e$, where $e=(e_1,\ldots,e_n)$ is the canonical basis of $\R^n$ represented as the identity matrix $I_n$. Condition~\ref{it:cond:volume} above implies $\det U = \pm 1$.

 Consider a minimized $m$-tuple $\gamma$ of $\underline{\New} f$. A \emph{$\gamma$-chain transformation} (denoted by $U$) of $\underline{\New} f$ is the resulting base-change integer matrix $U$ obtained using the construction above for a certain choice of a flag of cones~\eqref{eq:cones-tower}, and such that $\gamma$ is minimized by the cone $C^{n-m}$ appearing in that flag. The usefulness of $\gamma$-chain transformations $U$ will manifest themselves when being used as a change of variables $x = z^U$.
\begin{notation}\label{not:restriction}
Let $\gamma$ be a minimized tuple of $\underline{\New} f $, and let $y$ be a point in $\K^n$. For $i=1,\ldots,n$ denote by $(f_i - y_i)_\gamma$ the polynomial
\begin{equation}\label{eq:two-poss-pol}
\sum_{a\in \gamma_i\cap\supp f_i} c_a^{(i)}x^a - y_i\text{ , or }\sum_{a\in \gamma_i\cap\supp f_i} c_a^{(i)}x^a,
\end{equation} depending on whether $\underline{0}\in\gamma_i\cap\supp f_i$, or not. We thus write $(f - y)_\gamma=\big((f_1 - y_1)_\gamma,\ldots,(f_n - y_n)_\gamma \big)$. Furthermore, for any $\gamma$-chain transformation $U$ of $\underline{\New} f$, the notation $\overline{U}^\star(f-y)$ refers to the tuple consisting of element \[U^\star\big(x^{-\gamma^0_i}(f_i-y_i)\big),~i=1,\ldots,n.\] 
\end{notation} 

\begin{remark}
The use of $\overline{U}^\star$, instead of $U^\star$, for $\gamma$-chain transformations $U$, has the following advantage. For any $j=1,\ldots,n-1$, each member in the tuple $\overline{U}^\star\big( f-y\big)_{\gamma^j}$ has a constant term. 

\end{remark}

\begin{lemma}\label{lem:KhovBers2}
Given a point $y\in\K^n$, let $\gamma$ be any minimized $m$-tuple of $\underline{\New} f$. Then, there exists a $\gamma$-chain transformation $U$ of $\underline{\New} f$, involving the flag of tuples $\gamma^0\preceq \cdots\preceq\gamma^{n-1}$, such that for $i,j=1,\ldots,n-1$, we have 
\begin{enumerate}[label=(\arabic*)]
	\item \label{it:lem:restr} $ g_i=\overline{U}^\star(f_i - y_i)\in\K[z_1,\ldots,z_n]$ satisfying $\overline{U}^\star(f_i - y_i)_{\gamma^j}= g_i(z_1,\ldots,z_j,\underline{0})$, and
		
	\item \label{it:lem:const} the constant term of $g_i(z) $ depends on the point $y_i\in\K$ if and only if the $i$-th member $\gamma_i$ in the tuple $\gamma=(\gamma_1, \ldots ,\gamma_n)$ is an origin face of $\New f_i$.
\end{enumerate} 
\end{lemma}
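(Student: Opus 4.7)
The plan is to extend $\gamma$ to a flag of tuples $\gamma^0 \preceq \gamma^1 \preceq \cdots \preceq \gamma^{n-1}$ with $\gamma = \gamma^m$, build the $\gamma$-chain transformation $U$ from Section~\ref{subsec:mon-change}, and verify items (1)--(2) by tracking supports under the change of variables $x = z^U$. By Proposition~\ref{prop:normal-fan}, this amounts to choosing a flag of cones $C^1 \subset \cdots \subset C^n$ in $\mathcal{F}(f)$ containing the cone $C^{n-m}$ that minimizes $\gamma$; the downward extension (to $C^1$) is arbitrary, but the maximal cone $C^n$ (equivalently, the $0$-tuple $\gamma^0$) must be chosen with item (2) in mind.

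The decisive step is to pick $C^n$ so that $\gamma^0_i = \underline{0}$ exactly when $\gamma_i$ is an origin face of $\New f_i$. Let $I_0 := \{i : \underline{0} \in \gamma_i\}$. Because $\supp f_i \subset \N^n$, each $\gamma_i$ lies in $\R^n_{\geq 0}$, so the vector $\beta := (1,\ldots,1)$ satisfies $\beta \cdot w > 0$ for every non-zero $w \in \gamma_i$ with $i \in I_0$; that is, $\beta$ strictly minimizes $\underline{0}$ in $\gamma_i$ for each such $i$. Picking $\alpha_0 \in \Int(C^{n-m})$ and setting $\alpha_\epsilon := \alpha_0 + \epsilon\beta$ for generic small $\epsilon > 0$ (after a slight perturbation of $\beta$ within the above open condition, so that $\beta$ is transverse to $\operatorname{span}(C^{n-m})$), the vector $\alpha_\epsilon$ lies in the interior of some maximal cone $C^n$ having $C^{n-m}$ as a face, and selects $\underline{0}$ as the unique vertex of $\gamma_i$ (hence of $\New f_i$) for each $i \in I_0$. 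This yields the desired $\gamma^0$.

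With the flag fixed, conditions (i)--(iii) of Section~\ref{subsec:mon-change} imply $U(\supp f_i - \gamma^0_i) \subset \N^n$, so $g_i := \overline{U}^\star(f_i - y_i)$ lies in $\K[z_1,\ldots,z_n]$. For item (1), I will prove the identity: for $a \in \supp f_i$, the shifted exponent $a - \gamma^0_i$ lies in $L^j$ (the linear direction of the affine span of $\sum_i \gamma^j_i$) if and only if $a \in \gamma^j_i$. The forward inclusion comes from $\gamma^j_i - \gamma^0_i \subset L^j$, obtained by adding $\underline{0} \in \gamma^j_k - \gamma^0_k$ from the other Minkowski factors. The converse uses the orthogonality $L^j = (\operatorname{span} C^{n-j})^\perp$ combined with the fact that any $\alpha \in \Int(C^{n-j})$ strictly separates $\gamma^j_i$ from $\supp f_i \setminus \gamma^j_i$. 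Since $U$ maps $L^j$ onto $\R^j \times \{0\}^{n-j}$, substituting $z_{j+1} = \cdots = z_n = 0$ in $g_i$ retains exactly the monomials with $a \in \gamma^j_i$ (together with the $-y_i$ contribution, which survives iff $\underline{0} \in \gamma^j_i$), which is $\overline{U}^\star((f_i - y_i)_{\gamma^j})$. Item (2) is then immediate: the constant term of $g_i$ equals the coefficient of $x^{\gamma^0_i}$ in $f_i - y_i$, which depends on $y_i$ precisely when $\gamma^0_i = \underline{0}$, i.e.\ by the previous step precisely when $\gamma_i$ is an origin face of $\New f_i$. The main difficulty I anticipate is in that second step: controlling $\alpha_\epsilon$ so that it simultaneously lies in a maximal cone adjacent to $C^{n-m}$ and strictly minimizes $\underline{0}$ in each $\gamma_i$ with $i \in I_0$, which is delicate because $\Int(C^{n-m})$ already sits on the boundary of the strict-minimization region.
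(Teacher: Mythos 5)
Your proof is correct and follows essentially the same route as the paper: build the $\gamma$-chain transformation from a flag of cones through $C^{n-m}$, use conditions (i)--(iii) of Section~\ref{subsec:mon-change} for Item~(1), and choose the top cone $C^n$ so that $\gamma^0_i=\underline{0}$ exactly at the indices where $\gamma_i$ is an origin face for Item~(2). If anything, your write-up is more complete than the paper's: the perturbation $\alpha_0+\epsilon\beta$ with $\beta=(1,\ldots,1)$ supplies the existence of the required maximal cone $C^n$ adjacent to $C^{n-m}$ (a step the paper only asserts via Proposition~\ref{prop:normal-fan}), and your converse direction in Item~(1) (monomials with exponents outside $\gamma^j_i$ acquire a positive $z_k$-exponent for some $k>j$, via $L^j=(\operatorname{span} C^{n-j})^{\perp}$) is left implicit in the paper.
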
 
 
\begin{proof} \emph{Item~\ref{it:lem:restr}:} Condition~\ref{it:cond:rel-int} above shows that for $ i,j=1,\ldots,n-1$, the $i$-th member $\gamma_i^j$ in the $j$-tuple $\gamma^j$ of $\underline{\New} f$ satisfies
\[\gamma_i^j+\left\lbrace-\gamma_i^0\right\rbrace\in\left\lbrace c_1\tilde{e}_1+\cdots+c_j\tilde{e}_j~|~c_k\in\R_{\geq 0}\right\rbrace.\] Since each $z_j$ corresponds to an element $\tilde{e}_j$ in the new base-change matrix $U$, the Laurent polynomial $ x^{-\gamma_i^0}(f_i(x) -y_i)_{\gamma^j}$ is a linear combination of monomials of the form $z^{Ua}$, where all $Ua$ appearing therein belong to $\left\lbrace c_1\tilde{e}_1+\cdots+c_j\tilde{e}_j~|~c_k\in\R_{\geq 0}\right\rbrace$. Condition~\ref{it:cond:integer} shows that any above linear combination is actually a polynomial in $z^{\pm 1}$. Since a cone is a linear combination involving non-negative coefficients, the above polynomials do not have negative exponents. 

\textit{Item}~\ref{it:lem:const}: 
If $\gamma$ is a semi-origin $m$-tuple, then the corresponding minimizing cone $C^{n-m}$ is contained in an $n$-cone $C^n$, minimizing a semi-origin $0$-tuple $\gamma^0$ (see Proposition~\ref{prop:normal-fan}). The result follows by choosing the flag~\eqref{eq:cones-tower}, so that the $n$-cone therein is the above $C^{n}$.
\end{proof} Now, we can define more precisely $T$-BG maps. A \emph{generic} solution to a polynomial system is one at which the Jacobian matrix (evaluated using local coordinates) has full rank. Consider a $\gamma$-chain transformation $U$ as in Lemma~\ref{lem:KhovBers2} for a given minimized $m$-tuple $\gamma$ of $\underline{\New} f$. We say that $f$ is \emph{$\gamma$-generic} if for any $I\subset\{1,\ldots,n\}$, all solutions $z$ in $T_m\times\{0\}^{n-m}$ to the transformed system $\big(\overline{U}^\star f_i\big) =0,~i\in I$, are generic.

\begin{definition}\label{def:weakly}
A map $f:\K^n\rightarrow\K^n$ is said to be \emph{$T$-boundary generic} if it is a non-degenerate polynomial map, and $\gamma$-generic for every minimized $m$-tuple $\gamma$ of $\underline{\New} f$ that satisfies $m\geq 1$.
\end{definition}

\section{The two main results}\label{sec:main-results} In what follows, we assume that $f$ is a $T$-BG map. 
\begin{definition}\label{def:basic-tuple}
A tuple $\gamma$ of $\underline{\New} f$ is referred to as \emph{basic} if all its members belong to the same coordinate hyperplane of $\R^n$.
\end{definition} 

\begin{proposition}\label{prop:solutions-infty1} 
Let $y$ be a point in $ S^*_f\cup S_f$. Then, either $y=f(\underline{0})$, or there exists a minimized $m$-tuple $\gamma$ of $\underline{\New} f$, with $m\geq 1$, and a $\gamma$-chain transformation $U$ as in Lemma~\ref{lem:KhovBers2} such that

\begin{enumerate}[label=\textbf{(\alph*)}]

	\item \label{it:ex-sol} $\overline{U}^\star(f-y)=\underline{0}$ has a solution in $T_{m}\times\{0\}^{n-m}$,  
	
	\item \label{it:basic} if $y$ belongs to $S^*_f$ (resp. $S_f$), then $\gamma$ is (resp. is not) basic, 

	\item \label{it:pos-dim} if $\dim\gamma_i=0$ for some $i\in\{1,\ldots,n\}$, then $\gamma_i =(0,\ldots,0)\in\R^n$, and

	\item \label{it:semi-or} the tuple $\gamma$ is either semi-origin, or it is an almost semi-origin tuple but in this case the system $\overline{U}^\star(f-y)=\underline{0}$ has a solution in $T_{m}\times\{0\}^{n-m}$ that is \emph{not} generic. 
\end{enumerate}

\end{proposition}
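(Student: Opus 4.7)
The plan is to obtain the desired tuple $\gamma$ and chain transformation $U$ by tropicalising a strictly unstable solution, then read off items \ref{it:ex-sol}--\ref{it:semi-or} from the asymptotic data. After excluding $y = f(\underline{0})$, Lemma~\ref{lem:sol-infty} (for $y \in S_f$) and Definition~\ref{def:unstab-toric-Jel} (for $y \in S^*_f$) provide a semi-algebraic family $x(t) \in T_n$ with $f(x(t)) \to y$ whose limit lies in $\K P^n \setminus T_n$. I replace $x(t)$ by its Puiseux expansion $x_j(t) \sim c_j t^{\alpha_j}$ with $\alpha \in \Q^n \setminus \{\underline{0}\}$ and $c_j \in \K^*$, and let $\gamma$ be the $m$-tuple of $\underline{\New} f$ minimized by $\alpha$ via Proposition~\ref{prop:normal-fan}.

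Next, I would choose a flag $C^1 \subset \cdots \subset C^n$ in $\mathcal{F}(f)$ whose $(n-m)$-th member equals the cone of $\gamma$, and take the associated $\gamma$-chain transformation $U$ from Section~\ref{subsec:mon-change}. Under $x = z^U$ the Puiseux curve becomes $z(t)$ with $z_j(t) \to 0$ for $j > m$ and $z_j(0) \in \K^*$ for $j \leq m$; substituting into $\overline{U}^\star(f - y)$ and matching lowest-order terms via Lemma~\ref{lem:KhovBers2}~\ref{it:lem:restr}, the limit $z(0) \in T_m \times \{0\}^{n-m}$ is a zero of the transformed system. This is the standard Bernstein--Khovanskii--Kushnirenko reduction, which also forces $m \geq 1$ since otherwise the limit would remain in $T_n$. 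This establishes \ref{it:ex-sol}.

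For \ref{it:basic}, the tuple $\gamma$ is basic iff its minimizing cone contains a nonzero vector in the nonnegative orthant, iff $\alpha$ can be chosen with no negative entries, iff the limit of $x(t)$ lies in $\K^n \setminus T_n$ rather than at projective infinity; these two alternatives correspond respectively to $y \in S^*_f$ and $y \in S_f$. For \ref{it:pos-dim}, if $\dim \gamma_i = 0$ and $\gamma_i \neq \underline{0}$, the restriction $(f_i - y_i)_\gamma$ collapses to a single monomial $c_a x^a$ with $a \neq \underline{0}$, whose transform under $U$ is a nonzero monomial in $(z_1,\ldots,z_m)$ after setting $z_{m+1}=\cdots=z_n=0$; such a monomial has no zero on $T_m \times \{0\}^{n-m}$, contradicting \ref{it:ex-sol}.

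The main difficulty is \ref{it:semi-or}. If $\gamma$ is not semi-origin, Lemma~\ref{lem:KhovBers2}~\ref{it:lem:const} forces every constant term of $g_i = \overline{U}^\star(f_i - y_i)$ to be independent of $y$, so the restricted system $g_i(z_1,\ldots,z_m,\underline{0}) = 0$ is $y$-free and, by $T$-BG genericity, has only isolated nondegenerate zeros in $T_m$. Consequently the continuous variation of $y$ over $S_f \cup S^*_f$ can only be absorbed by the next-order Puiseux coefficients: picking any ray of $C_\gamma$ and its associated $(n-1)$-tuple $\gamma' \succeq \gamma$ with minimizer $\alpha'$, the $-y_i$ term reappears in the next-order restriction precisely when no lattice point lies strictly between the parallel hyperplanes $H'^0_i$ and $H'_i$ for some $i$, which is exactly the almost semi-origin condition. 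Moreover, matching those next-order coefficients forces the Jacobian of the restricted system to drop rank at $z(0)$, so the solution in $T_m \times \{0\}^{n-m}$ is not generic. The lattice-point bookkeeping inside the slabs of the flag is the technical step I expect to be the main obstacle.
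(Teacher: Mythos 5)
Your treatment of items~\ref{it:ex-sol}--\ref{it:pos-dim} follows the paper's argument essentially verbatim: Puiseux expansion of an escaping family, the tuple $\gamma$ minimized by the exponent vector $\alpha$, the sign of $\alpha$ governing \ref{it:basic}, and the single-monomial contradiction for \ref{it:pos-dim}. One caveat even there: the paper does not expand an arbitrary family with $f(x(t))\to y$, but first uses Lemma~\ref{lem:sf-ext:non-empty} to pick a direction $\mu$ with $\{y+t\mu\}_{t\in[0,1[}\cap(S^*_f\cup S_f)=\{y\}$ and takes $x(t)$ to be a \emph{generic} branch of $f-y-t\mu=\underline{0}$. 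This perturbation is not cosmetic; it is what later guarantees that the moving solutions $z(t)$ are simple for $t>0$ and that their limit is the only degeneration happening at $t=0$, which is exactly what \ref{it:semi-or} needs.

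For \ref{it:semi-or} there is a genuine gap. You assert that when $\gamma$ is not semi-origin, ``matching the next-order coefficients forces the Jacobian of the restricted system to drop rank at $z(0)$,'' and you explicitly defer the lattice-point bookkeeping. But that assertion is the whole content of the item, and the mechanism you propose is not the one that works. The paper's argument is: since $\gamma$ is not semi-origin, Lemma~\ref{lem:KhovBers2}\ref{it:lem:const} makes the restricted system $y$-independent, so the limit point $\overline{\zeta}\in T_m\times\{0\}^{n-m}$ is a solution of $\overline{U}^\star(f-y-t\mu)=\underline{0}$ for \emph{every} $t\in[0,1[$; since it is simultaneously the limit of the distinct moving solutions $z(t)$, it acquires multiplicity at least two at $t=0$, i.e.\ it is not generic. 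Non-genericity is thus a collision-of-solutions argument, not a consequence of coefficient matching at the next Puiseux order. The almost-semi-origin conclusion then comes from a separate step you have only gestured at: because $f$ is $T$-BG, $\det\Jac_{\overline{\rho}}(g)$ must depend on $y$, the restriction of $g$ to $z_n=0$ does not, so $y_i$ can only enter through $\partial_n g_i|_{z_n=0}$, forcing a term $(f_i(\underline{0})-y_i)z^w$ with $w\in\N^{n-1}\times\{1\}$; pulling $w$ back through $U$ is precisely the statement that some $\New f_i$ has no lattice point strictly between $H'^0_i$ and $H'_i$. Without carrying out both of these steps the proof of \ref{it:semi-or} is incomplete.
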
 

\begin{proof} In what follows, we suppose that $y\in S^*_f\cup S_f\setminus f(\underline{0})$. Then, the system~\eqref{eq:sys:0} has a strictly unstable solution $[p]\in\K P^n\setminus T_n$. The proof of Item~\ref{it:ex-sol} relies on a classical method appearing, for example, in~\cite{Ber75,Kho97,Biv07,Tha09}. 

\emph{Item~\ref{it:ex-sol}}: From Lemma~\ref{lem:sf-ext:non-empty}, we have the following. There exists a point $\mu=(\mu_1,\ldots,\mu_n)\in\K^n $ such that $\{y + t\cdot\mu \}_{t\in[0,1[}\cap \big( S^*_f\cup S_f\big) = y$, and $f - y -t\cdot\mu =\underline{0}$ has a generic solution $x(t) \in T_n$ converging to $[p]$ whenever $t\rightarrow 0$. Using Puiseux Theorem on $f - y -t\cdot\mu =\underline{0}$ regarded now as a system having Puiseux series for coefficients, we deduce that for $i=1,\ldots,n$, the coordinate $x_i(t)$ is expressed as a non-zero univariate Laurent polynomial/series of the form 
\begin{equation}\label{eq:Puiseux3}
s_it^{\alpha_i} + ~\text{higher order terms}.
\end{equation} Now plugging $x(t)$ into $f - y -t\cdot\mu =\underline{0}$, for $i=1,\ldots,n$ we deduce that there exists a tuple $\gamma$ of $\underline{\New} f$, minimized by $\alpha$, such that 
\begin{equation}\label{eq:min-limit}
t^{-\min(\sum \alpha_j a_j~|~a\in\supp f_i)}\big(f_i(x(t)) - y_i - t\mu_i\big)\rightarrow  (f_i(s) - y_i)_\gamma\quad \text{for}\quad t\rightarrow 0.
\end{equation} Hence, the point $s\in T_n$ is a solution to $(f - y)_\gamma =\underline{0}$. This means that $(\rho_1,\ldots,\rho_n)\in T_n$, satisfying $s=(\rho_1,\ldots,\rho_n)^U$, belongs to $\mathcal{Z}^{\tor}\big(\overline{U}^\star(f-y)_\gamma\big)$, where $U$ is a well-chosen $\gamma$-chain transformation. On the other hand, Item~\ref{it:lem:restr} shows that $\overline{U}^\star(f-y)_\gamma$ is a tuple of $n$ polynomials in $m$ variables. Therefore, the point $(\rho_1,\ldots,\rho_{m})\in T_{m}$ is a solution to $\overline{U}^\star(f-y)_\gamma$. We use Item~\ref{it:lem:restr} again to show that $\overline{\rho}=(\rho_1,\ldots,\rho_m,0,\ldots,0)\in\K^n$ is a solution to $\overline{U}^\star(f-y)=\underline{0}$.

\textit{Item~\ref{it:basic}:} Note that the convergence of $x(t)$, is directed towards $[p]$ with a coordinate-wise factor of $\alpha\in\Q^n$. From the expression in~\eqref{eq:Puiseux3}, we deduce the following. If $y$ belongs to $S^*_f$ (resp. $S_f$), then the point $[p]$ above belongs to $\K^n\setminus T_n$ (resp. $\K P^n\setminus \K^n$), and thus none (resp. at least one) of the coordinates of $\alpha\in\Q^n$ is/are negative. Since the members in $\underline{\New} f$ belong to $(\R_{\geq 0})^n$, the claim will thus follow by observing that the tuple $\gamma$, minimized by $\alpha$, is basic if and only if $\alpha$ has non-negative coordinates.

\textit{Item~\ref{it:pos-dim}:} Assume that for some $i\in\{1,\ldots,n\}$, the member $\gamma_i$ in the tuple $\gamma$ has dimension zero. Then $(f_i - y_i)_{\gamma}$ consists of only one monomial (recall Notation~\ref{not:restriction}). Since none of the coordinates of $s$ are zero (see proof of Item \ref{it:ex-sol}), the above monomial is not a variable. We thus obtain $(f_i - y_i)_\gamma = f_i(\underline{0}) - y_i$, and the equality $f_i(\underline{0}) - y_i=0$ is independent of the variables $x$ in $\K^n$. One can use the same above arguments to show that either $m\geq 1$, or $y=f(\underline{0})$.

\textit{Item~\ref{it:semi-or}:}  Assume that $\gamma$ is not a semi-origin tuple. The first step is to prove that $\overline{U}^\star(f-y)=\underline{0}$ has a solution $\overline{\zeta}\in T_{m}\times\{0\}^{n-m}$ with positive multiplicity. This will be done by showing that $\overline{\zeta}$ is a solution to $\overline{U}^\star(f-y - t\cdot\mu )=\underline{0}$ for any $t\in[0,1[$ that has positive multiplicity for $t=0$. 

Recall that for $t\in ]0,1[$ the solution $x(t)\in T_n$, from Item~\ref{it:ex-sol}, converges to $[p]$ with direction that depends on a vector $\alpha\in\Q^n$ minimizing $\gamma$. The point $z(t)\in T_n$, satisfying $x(t) = z^U(t)$ is also a Puiseux series
\begin{equation}\label{eq:Puiseux2}
r_it^{\lambda_i} + ~ \text{higher order terms, }i=1,\ldots,n,
\end{equation} converging to some point in $\K P^n$, with direction that depends on the vector $\lambda\in \Q^n$, and belongs to $\mathcal{Z}^{\tor}\big(\overline{U}^\star(f-y - t\cdot\mu)\big)$. We start by showing that $\lim_{t\rightarrow 0}z(t)$ belongs to the same linear subspace $T_m\times\{0\}^{n-m}$ of $\K^n$, that contains $\overline{\rho}$. As in the proof of Item~\ref{it:ex-sol}, we deduce that the vector $\lambda$ minimizes a tuple $\delta=(\delta_1,\ldots,\delta_n)$ of $\underline{\New}\big(\overline{U}^\star(f - y -\mu t)\big)$. Hence, from $(\alpha_1,\ldots,\alpha_n) = U^{T}\cdot\big(\lambda_1,\ldots,\lambda_n\big)$ (here, $T$ denotes the transpose of a matrix), we have $U\gamma = \delta$. Thus, from~\eqref{eq:supp-supp}, we get \[\overline{U}^\star(f - y -t\cdot\mu)_\gamma = \big(\overline{U}^\star(f - y -t\cdot\mu)\big)_{U\gamma} = \big(\overline{U}^\star(f - y -t\cdot\mu)\big)_\delta .\] Now, Item~\ref{it:lem:restr} shows that each of the members $\delta_i$ above lie in the subspace $\R^m\times\{0\}^{n-m}$ of $\R^m$. Hence, we have $\lambda=(0,\ldots,0,\lambda_{m+1},\ldots,\lambda_n)\in\Q^n$, and thus $\overline{U}^\star(f-y)=\underline{0}$ has a solution \[\overline{\zeta}=\lim_{t\rightarrow 0}z(t) = (\zeta_1,\ldots,\zeta_m,0,\ldots,0)\in  T_m\times\{0\}^{n-m}.\] 

Therefore, there is a solution $z(t)\in T_n$ to $\overline{U}^\star(f - y - t\cdot \mu)=\underline{0}$ for any $t\in [0,1[$, which converges to $\overline{\zeta}\in T_m\times\{0\}^{n-m}$ for $t\rightarrow 0$. On the other hand, Item~\ref{it:lem:restr} again shows that $\overline{\zeta}$ is a solution to $\overline{U}^\star(f-y)_\gamma=\underline{0}$, and from Item~\ref{it:lem:const}, we have $\overline{U}^\star(f-y)_\gamma = \overline{U}^\star f_\gamma$. This implies that $\overline{\zeta}$ is a solution to $\overline{U}^\star(f - y -t\cdot \mu)=\underline{0}$ for any $t\in [0,1[$. Since $\overline{U}^\star(f - y -t\cdot \mu)\rightarrow\overline{U}^\star(f - y)$, the point $\overline{\zeta}$ thus becomes a solution to $\overline{U}^\star(f - y)$ with positive multiplicity. By abuse of notation, we write $\overline{\rho}=\overline{\zeta}$. 

We finish by showing that $\gamma$ is an almost semi-origin tuple. From the discussion in the above paragraph, and since $f$ is $T$-BG, the point $\overline{\rho}$ is a simple solution to $\overline{U}^\star f=\underline{0}$. Therefore, the equation of the Jacobian $\Jac_{\overline{\rho}}(g)$ of $g=\overline{U}^\star(f - y)$, evaluated at $\overline{\rho}$, must depend on the point $y\in\K^n$. 
We deduce from Item \ref{it:lem:const} that, since $\gamma$ is not a semi-origin tuple, the tuple of polynomials $g(z_1,\ldots,z_{n-1},0)$ does \emph{not} depend on $y$. Therefore, for any $j\neq n$, the vector $\left.\left( \partial_j g_1,\ldots,\partial_j g_n\right) \right|_{z_n = 0} $ does not depend on $y$. On the other hand, if for some $i\in\{1,\ldots,n\}$, the element $\left.\partial_n g_i\right|_{z_n = 0}$ depends on $y_i$, then the polynomial $g_i(z)$ contains a term $(f_i(\underline{0}) - y_i)z^w$, where $w=(w_1,\ldots,w_{n-1},1)$. Indeed, otherwise we have $\partial_n(f_i(\underline{0}) - y_i)z^w|_{z_n=0} = 0$. 

Thus, there exists a minimized $(n-1)$-tuple $\delta'=(\delta'_1,\ldots,\delta'_n)$ of $\New g$, such that the above $\delta$ is a sub-tuple of $\delta'$, and each member $\delta_i'$ in $\delta'$ belongs to $H_n=\{X\in\R^n~|~X_n = 0\}$. 
Then, the point $w\in\N^{n-1}\times\{1\}$ above belongs to $H_n + e_n$. Since the coefficient in front of $z^w$ depends on $y_i$, we obtain $\underline{0} = Uw$. This makes $\gamma'$, satisfying $\delta' = U\gamma'$, an almost-origin $(n-1)$-tuple of $\underline{\New} f$. Finally, since $\delta\preceq \delta'$, we get $\gamma\preceq\gamma'$, and thus $\gamma$ is also an almost semi-origin tuple of $\underline{\New} f$.
\end{proof} 
For the other direction, we need the following notion. The \emph{origin-certification set} for a semi-origin tuple $\gamma=(\gamma_1,\ldots,\gamma_n)$ of $\underline{\New}f$, is a (possibly non-proper) subset $\theta$ of $\{1,\ldots,n\}$ satisfying $i\in\theta\Leftrightarrow \gamma_i$ is an origin tuple of $\New f_i$. The set $\{1,\ldots,n\}\setminus\theta$ will be denoted by $\theta^c$.

\begin{proposition}\label{prop:solutions-infty2}
Let $\gamma$ be a minimized $m$-tuple of $\underline{\New} f$, with $m\geq 1$. Assume that there exists a $\gamma$-chain transformation $U$ as in Lemma~\ref{lem:KhovBers2}, such that $\overline{U}^\star(f-y) =\underline{0}$ has a solution $\overline{\rho}\in  T_m\times\{0\}^{n-m}$ for some $y\neq f(\underline{0})$ in $\K^n$. If $\gamma$ is a semi-origin tuple, or if $\overline{\rho}$ is \emph{not} generic, then $y$ belongs to $S^*_f\cup S_f$. Moreover, if $\gamma$ is (resp. not) basic, then $y$ belongs to $S^*_f$ (resp. $S_f$).
\end{proposition}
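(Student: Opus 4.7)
The plan is to invert the Puiseux construction of Item~\ref{it:ex-sol} in Proposition~\ref{prop:solutions-infty1}: lift the boundary solution $\overline{\rho}$ to a family $z(t)\in T_n$ with $z(t)\to\overline{\rho}$ as $t\to 0^+$, then transport it via $x(t)=z(t)^U$ to produce a strictly unstable approach curve to $y$ in $\K P^n$. I choose the Puiseux direction $\lambda=(0,\ldots,0,\lambda_{m+1},\ldots,\lambda_n)$ with $\lambda_j>0$ for $j>m$; this lies in the relative interior of the cone (in the $z$-coordinates) minimizing $U\gamma$, so $U^T\lambda$ lies in the relative interior of the minimizing cone $C^{n-m}$ of $\gamma$ in the original coordinates.

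I then seek a Puiseux series
\[
z(t) = \bigl(\rho_1 + \cdots,\ \ldots,\ \rho_m + \cdots,\ s_{m+1}\, t^{\lambda_{m+1}} + \cdots,\ \ldots,\ s_n\, t^{\lambda_n} + \cdots\bigr),
\]
with $s_j\in\K^*$, together with a curve $y(t) = y + t\mu + \cdots$ in $\K^n$ satisfying $y(t)\to y$, such that $\overline{U}^\star(f - y(t))(z(t)) = \underline{0}$ holds identically in $t$. By Lemma~\ref{lem:KhovBers2}\ref{it:lem:const}, only the coordinates $y_i$ with $i$ in the origin-certification set $\theta$ of $\gamma$ affect the constant terms of $\overline{U}^\star(f_i - y_i)$; I accordingly restrict $\mu$ to be supported on $\theta$. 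By Item~\ref{it:lem:restr} of the same lemma, the leading $t^0$ identity of the expansion reduces exactly to $\overline{U}^\star(f - y)_\gamma(\rho_1,\ldots,\rho_m) = \underline{0}$, which holds by assumption on $\overline{\rho}$.

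The two alternative hypotheses play complementary roles in solving the higher-order obstructions. When $\gamma$ is semi-origin ($\theta\ne\emptyset$), the free leading parameters $s_{m+1},\ldots,s_n,\{\mu_i\}_{i\in\theta}$, along with higher-order coefficients of $z(t)$ and $y(t)$, furnish enough unknowns to match the obstructions order by order; the $T$-BG condition (Definition~\ref{def:weakly}) applied to the appropriate subsystems indexed by $I\subset\{1,\ldots,n\}$ ensures that the Jacobians appearing at each stage have the rank required for solvability, and a Hensel-type iteration yields a genuine Puiseux expansion with $y(t)\to y$. When $\overline{\rho}$ is not generic (the Jacobian of $\overline{U}^\star(f - y)$ at $\overline{\rho}$ has rank strictly less than $n$), I take $\mu=0$ instead: the $T$-BG hypothesis forces the $n\times m$ restricted Jacobian on $T_m\times\{0\}^{n-m}$ to have full column rank $m$, so the kernel of the full Jacobian contains a nonzero vector with a nontrivial entry in some position $j>m$; this tangent direction opens a one-parameter deformation of $\overline{\rho}$ into $T_n$ satisfying $\overline{U}^\star(f - y) = \underline{0}$ exactly, and the higher-order corrections close off via the $T$-BG condition on the complementary sub-systems.

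Finally, with $x(t) = z(t)^U \in T_n$, the $t$-order of $x_j(t)$ is $(U^T\lambda)_j$, which are the coordinates of a vector in the relative interior of $C^{n-m}$. As argued in the proof of Item~\ref{it:basic} of Proposition~\ref{prop:solutions-infty1}, $\gamma$ is basic if and only if $C^{n-m}$ contains a vector all of whose coordinates are non-negative. In the basic case I may choose $\lambda$ so that $U^T\lambda\in\R^n_{\geq 0}$; every $x_j(t)$ then stays bounded as $t\to 0^+$ and at least one tends to $0$, so $x(t)$ approaches a point in $\K^n\setminus T_n$ and $y\in S^*_f$. In the non-basic case, $U^T\lambda$ necessarily has a strictly negative entry, so $|x(t)|\to+\infty$ and $y\in S_f$. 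The main obstacle will be the iterative Puiseux construction: verifying that the $T$-BG condition delivers, in the semi-origin respectively non-generic regime, exactly the right number of free parameters (the coordinates of $\mu$ on $\theta$, respectively the directions in the Jacobian kernel transverse to the stratum) to close the order-by-order expansion without drifting out of $T_m\times\{0\}^{n-m}$.
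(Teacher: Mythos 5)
Your overall strategy --- produce a curve $z(t)\in T_n$ converging to $\overline{\rho}$ together with a curve $y(t)\to y$ solving $\overline{U}^\star\big(f-y(t)\big)(z(t))=\underline{0}$, transport it by $x(t)=z(t)^U$, and decide between $S^*_f$ and $S_f$ from the signs of the entries of $\alpha=U^{T}\lambda$ --- is the same as the paper's. The execution of the non-generic case, however, has a genuine gap. You claim that when $\overline{\rho}$ is not generic, a nonzero vector in the kernel of $\Jac_{\overline{\rho}}\big(\overline{U}^\star(f-y)\big)$ with a nontrivial entry in some position $j>m$ ``opens a one-parameter deformation of $\overline{\rho}$ into $T_n$ satisfying $\overline{U}^\star(f-y)=\underline{0}$ exactly,'' with $\mu=0$. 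This is false: a kernel direction of the Jacobian at a multiple \emph{isolated} solution does not integrate to a curve of solutions. For instance $g_1=z_1-1+z_2^2$, $g_2=z_1-1+2z_2^2$ at $\overline{\rho}=(1,0)$ has kernel direction $(0,1)$ and full column rank on the $z_1$-column, yet $(1,0)$ is an isolated double zero. Moreover, if such an exact curve existed, the fibre $f^{-1}(y)$ would be positive-dimensional, which is far more than non-genericity of $\overline{\rho}$ gives you. The paper's third case instead \emph{perturbs $y$}: since $f$ is $T$-BG the multiplicity of $\overline{\rho}$ depends on $y$, so a generic perturbation $y\mapsto y+\varepsilon$ splits off a simple solution $\sigma\in T_n$ near $\overline{\rho}$, and reversing the perturbation makes $\sigma$ converge to $\overline{\rho}$, producing the required family with $y(t)\not\equiv y$. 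Keeping $y$ fixed in this case cannot work.

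In the semi-origin case your parameter count is plausible, but the ``Hensel-type iteration'' is precisely the step that needs proof, and restricting $\mu$ to be supported on $\theta$ discards freedom that is actually used. For an origin tuple no iteration is needed: take any curve $z(t)\in T_n$ tending to $\overline{\rho}$ and \emph{define} $y_i(t):=\overline{U}^\star f_i(z(t))$, which is legitimate because, by Item~\ref{it:lem:const} of Lemma~\ref{lem:KhovBers2}, the $y_i$ enter only as additive constants. For a strictly semi-origin tuple the equations indexed by $\theta^c$ constrain $z(t)$ to lie on $\mathcal{Z}\big(\overline{U}^\star f_{\theta^c}\big)$, and to guarantee a branch of that locus through $\overline{\rho}$ that actually enters $T_n$ (rather than $\overline{\rho}$ being isolated there) the paper also perturbs the coordinates $y_{\theta^c}$ generically --- exactly the coordinates your $\mu$ excludes. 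Your concluding dichotomy (basic if and only if the relevant minimizing vector has non-negative entries) matches the argument for Item~\ref{it:basic} of Proposition~\ref{prop:solutions-infty1} and is fine, except that $\lambda$ is dictated by the constructed curve rather than freely chosen.
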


\begin{proof}

Consider a smooth generic curve $\varphi:[0,1[\rightarrow \K^n$, $t\mapsto \varphi(t)$, such that $\varphi([0,1[)\cap\big( S^*_f\cup S_f\big) = \varphi(0) = y$, and that each of its coordinates is a Puiseux series in $t$. If this creates a continuous family $\{z(t)\}_{t\in ]0,1[}\subset T_n$ of \emph{isolated} solutions to $\overline{U}^\star(f -\varphi(t)) = \underline{0}$, such that $\lim_{t\rightarrow 0}z(t) =\overline{\rho}$, we get the following. Since $\overline{\rho}\in \K^n\setminus T_n$, the point $z^U(t)$ is an isolated solution to $f - \varphi(t) =\underline{0}$, whose limit $[p]=\lim_{t\rightarrow 0}z^U(t)$ belongs to $\K P^n\setminus T_n$. This makes $[p]$ a strictly unstable solution to $f - y=\underline{0}$, and hence $y\in S^*_f\cup S_f$. 

 In what follows, we show existence of such $\varphi$, creating isolated solutions $z(t)$ as above in three different cases. Let $\theta\subset\{1,\ldots,n\}$ be the origin-certification set for $\gamma$.

\textbf{First case:} $\theta=\{1,\ldots,n\}$. Item \ref{it:lem:const} implies that for any $z\in T_n$, we have $\overline{U}^\star(f - y)(z)  = \overline{U}^\star f (z) - y $. Hence, for any continuous family $\{z(t)\}_{t\in ]0,1[}\subset T_n$ of points converging to $\overline{\rho}$, we define $\varphi(t)=\big(\varphi_1(t),\ldots,\varphi_n(t)\big)\in\K^n$, such that $\varphi_i(t) = \overline{U}^\star f_i\big(z(t)\big) $ for $i=1,\ldots,n$. We choose $\{z(t)\}_{t\in ]0,1[}$ to be outside the locus of critical points of the map $\overline{U}^\star(f-y):\K^n\rightarrow\K^n$. 

\textbf{Second case:} $\theta\neq\emptyset$ and $\theta \neq\{1,\ldots,n\}$. Consider the tuple $f_{\theta^c} - y_{\theta^c}$, consisting of only those polynomials $f_i - y_i $ whose subscripts do not belong to $\theta$. Item \ref{it:lem:const}, shows that $\overline{U}^\star(f_{\theta^c} - y_{\theta^c})_\gamma =\overline{U}^\star f_{\theta^c,\gamma}$, and Item~\ref{it:lem:restr} shows that $\overline{\rho}$ is a solution to $\overline{U}^\star f_{\theta^c,\gamma}=\underline{0}$, and to $\overline{U}^\star f_{\theta^c}=\underline{0}$. Since $f$ is $T$-BG, one can choose a smooth family of points $\{z(t)\}_{t\in ]0,1[}\subset T_n$ such that the curve $z:]0,1[\rightarrow T_n$ contains $\overline{\rho}$ in its closure, and $\overline{U}^\star f_{\theta^c}(z(t)) =\underline{0}$. Then, there exists another curve $\tilde{z}:]0,1[\rightarrow T_n$, also containing $\overline{\rho}$ in its closure, such that \[\overline{U}^\star\big( f_{\theta^c} - \varphi_{\theta^c} (t)\big)(\tilde{z}(t)) =\underline{0},\] where $\varphi_{\theta^c}:[0,1[\rightarrow \K^{|\theta^c|}$, $t\mapsto\big(\varphi_i(t) \big)_{i\in\theta^c}$ is a generic continuous curve, satisfying $\varphi_{\theta^c}(0)=y_{\theta^c} = \left( y_i\right)_{i\in\theta^c}$. Indeed, since $f$ is $T$-BG, if the solution $\overline{\rho}\in T_m\times\{0\}^{n-m}$ to $\overline{U}^\star\big( f_{\theta^c} - y_{\theta^c}\big)=\underline{0}$ is isolated, then the coordinates $\left( y_i\right)_{i\in\theta^c}$ satisfy an algebraic condition. Generic change on $y_{\theta^c}$, in the form of a curve $\varphi_{\theta^c}$ above can negate this condition. 

 On the other hand, Item \ref{it:lem:const} shows that $\overline{U}^\star(f_i - y_i) = \overline{U}^\star f_i - y_i$ for $i\in\theta$. Hence, for any $t\in]0,1[$ one can choose $\varphi_i(t)\in\K$ as above so that \[\overline{U}^\star f_\theta\big(\tilde{z}(t)\big) - \varphi_\theta(t) = \underline{0}.\] This creates a curve $\varphi:[0,1[\rightarrow \K^n$, $t\mapsto \big( \varphi_i(t)\big)_{i=1,\ldots,n}$, such that $\tilde{z}(t)$ is an isolated solution to $\overline{U}^\star\big(f - \varphi(t)\big) =\underline{0}$ for any $t\in]0,1[$. 

\textbf{Third case:} $\theta=\emptyset$. This means that $\overline{\rho}\in\K^n$ is a solution to $\overline{U}^\star(f-y)=\underline{0}$ with positive multiplicity, and hence the determinant of the Jacobian $\Jac_{\overline{\rho}}\big(\overline{U}^\star(f-y)\big)$ evaluated at $\overline{\rho}$, is equal to zero. Recall the notation $\gamma^{n-1}\subset \underline{\New} f$ for the $(n-1)$-tuple appearing in the construction of the $\gamma$-chain transformation of Lemma~\ref{lem:KhovBers2}. Since $f$ is $T$-BG, we have $\Jac_{\overline{\rho}}\big(\overline{U}^\star(f-y)\big)$ depends on $y$, and thus making $\gamma^{n-1}$ an almost semi-origin tuple (see Proof of Item \ref{it:semi-or}).

Assume first that the multiplicity of $\overline{\rho}$ is two. Since $f$ is $T$-BG, the multiplicity of $\overline{\rho}$ depends on $y$. One can thus make any generic perturbation $y\mapsto y+\varepsilon$, on $y\in\K^n$, so that $\overline{\rho}$ splits into two distinct simple solutions, one is still $\overline{\rho}$, and another one $\sigma$ becomes somewhere in $\K^n$. Note that $\sigma$ is necessarily contained in $\R^n$ if $\K=\R$, and $\overline{\rho}\in\R^n\setminus (\R^*)^n$ since Item~\ref{it:lem:const} shows that such perturbation does not change the position of $\overline{\rho}\in\R^n$. Now, we choose $\tilde{y}$ so that not to increase the solutions in $\K^n\setminus T_n$, thus putting $\sigma$ in $T_n$. Each such point $\sigma$ becomes an isolated solution to $\overline{U}^\star\big(f - \tilde{y}\big)=\underline{0}$ for any generic perturbation. Finally, since $\sigma$ converges to $\overline{\rho}$ when reversing the perturbation $y\mapsto y - \varepsilon$, this proves the claim.

Assume now that the multiplicity of $\overline{\rho}$ is higher than two. Then $y\in\K^n$ belongs to an intersection $V_1\cap\cdots\cap V_k\subset\K^n$ of algebraic subvarieties determined by higher order derivations of polynomials $\overline{U}^\star(f-y)$. Moving $y$ in the strata of the union of those subvarieties from one stratum to a higher-dimensional one, we make the multiplicity of $\overline{\rho}$ diminish by one at each step. Similarly to before, this induces a splitting of $\overline{\rho}$ that gives a simple solution $\sigma\in T_n$. We can thus conclude this case in the same way we did in the above paragraph. 

\textbf{The last statement:} Recall that the curve $\varphi(t)$, defined in the beginning of this proof has Puiseux series in $t$ as coordinates. Then, from Puiseux/Newton theorem, the continuous family $\{z(t)\}_{t\in]0,1[}\subset T_n$ of points constructed above in each case, has also Puiseux series for coordinates as in~\eqref{eq:Puiseux2}. Recall that $z^U(t)$ is an isolated solution to $f - \varphi(t) =\underline{0}$, that converges to the unstable solution $[p]$ to $f - y =\underline{0}$. Then, going through the proof of Item~\ref{it:ex-sol}, we can show that $\alpha$ minimizes the tuple $\gamma$. Indeed, since $\alpha$ directs the point $z^U(t)$ towards $[p]$, and $\alpha = U^{\tr}\cdot\lambda$. 

 Finally, if $\gamma$ is (resp. not) basic, then (resp. not) all of the coordinates of $\alpha$ are non-negative. This implies that $z^U(t)$ converges to $\K^n\setminus T_n$ (resp. $\K P^n\setminus \K^n$), and thus $[p]$ belongs to $\K^n\setminus T$ (resp. $\K P^n\setminus \K^n$). Hence, we conclude that $y$ belongs to $S^{*}_f$ (resp. $S_f$).

\end{proof}

\section{Computing the Jelonek set}\label{sec:compute} 
Let $y\in\K^n$ be a point different from $f(\underline{0})$, and consider a minimized $m$-tuple $\gamma$ of $\underline{\New} f$, with $m\geq 1$. Then, for any $\gamma$-chain transformation $U$ as in Lemma~\ref{lem:KhovBers2}, the two polynomial tuples $(f-y)_\gamma$, and $\overline{U}^\star(f-y)_\gamma$, define polynomial maps $\K^n\rightarrow\K^n$, and $\K^m\rightarrow\K^n$ respectively. 
\begin{lemma}\label{lem:invar}
Assume that $U$ is as in Lemma~\ref{lem:KhovBers2}, and that the set
\begin{equation}\label{eq:u-bar}
\left\lbrace \left. z\in T_m\times\{0\}^{n-m}~\right|~z\in\mathcal{Z}^{\tor}\big(\overline{U}^\star(f-y)\big)\right\rbrace
\end{equation} is non-empty. Then, the image $\overline{U}^\star(f-y)_\gamma ( T_m)$ in $\K^n$ does not depend on the choice of the $\gamma$-chain transformation $U$. Moreover, the determinant $\det \Jac_z\big(\overline{U}^\star(f-y)\big)$, evaluated at $z$ in~\eqref{eq:u-bar}, is either a constant in $\K^*$, or is a function in $y$ whose zero locus is also indepedent of the choice of $U$. 
\end{lemma}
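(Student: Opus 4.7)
The plan is to compare two valid $\gamma$-chain transformations $U_1, U_2$ through the transition matrix $V := U_1 U_2^{-1} \in GL_n(\Z)$. I would first observe that both $U_j$ send the sublattice $L^m \cap \Z^n$ onto $\Z^m \times \{0\}^{n-m}$ (this is Condition~\ref{it:cond:volume} at level $j = m$); hence $V$ preserves this sublattice and has block upper-triangular form with diagonal blocks $A \in GL_m(\Z)$ and $D \in GL_{n-m}(\Z)$. Writing $\gamma^{0}_i$ and $\widetilde{\gamma}^{0}_i$ for the $i$-th components of the $0$-tuples attached to $U_1$ and $U_2$, an unwinding of the definition of $\overline{U}_j^\star$ produces the transformation identity
\[
\overline{U}_1^\star(f_i - y_i)(z) \;=\; w^{\mu_i}\, \overline{U}_2^\star(f_i - y_i)(w), \qquad w = z^V, \quad \mu_i := U_2\bigl(\widetilde{\gamma}^{0}_i - \gamma^{0}_i\bigr).
\]
Because $\gamma^{0}_i$ and $\widetilde{\gamma}^{0}_i$ are both vertices of the common face $\gamma_i$, their difference lies in the linear span of $\gamma_i - \gamma_i \subseteq L^m$, so $\mu_i \in \Z^m \times \{0\}^{n-m}$ and $w^{\mu_i}$ is a nonvanishing Laurent monomial in $w_1, \ldots, w_m$.

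For the image statement, restricting the identity to the subtorus $T_m \times \{0\}^{n-m}$ via Item~\ref{it:lem:restr} of Lemma~\ref{lem:KhovBers2} yields
\[
\overline{U}_1^\star(f-y)_\gamma(z) \;=\; D_\mu\bigl(\phi_A(z)\bigr)\cdot \overline{U}_2^\star(f-y)_\gamma\bigl(\phi_A(z)\bigr),
\]
where $\phi_A : T_m \to T_m$ is the monomial automorphism induced by the unimodular block $A$ and $D_\mu(w) := \mathrm{diag}(w^{\mu_1}, \ldots, w^{\mu_n})$ is a diagonal matrix of non-zero monomials on $T_m$. Since $\phi_A$ is a bijection of $T_m$, the image of $\overline{U}_1^\star(f-y)_\gamma$ coincides with $\{D_\mu(w) \cdot \overline{U}_2^\star(f-y)_\gamma(w) : w \in T_m\}$, and the remaining task is to exhibit a toric automorphism $\tau : T_m \to T_m$ satisfying $D_\mu(w) \cdot \overline{U}_2^\star(f-y)_\gamma(w) = \overline{U}_2^\star(f-y)_\gamma(\tau(w))$ for every $w$. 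Such a $\tau$ arises from the weighted-homogeneity of each $\overline{U}_2^\star(f_i - y_i)_\gamma$ under the one-parameter subgroups associated to the cone $C_\gamma$, and the constraint forced by Item~\ref{it:lem:const} (namely $\gamma^0_i = \underline{0}$ exactly when $\gamma_i$ is an origin face) is precisely what makes the rescaling consistent across all $n$ components simultaneously.

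For the Jacobian statement, I would apply the chain rule to the transformation identity at a point $z_0$ in the (assumed non-empty) set in~\eqref{eq:u-bar}. Since $\overline{U}_2^\star(f-y)(w_0) = 0$ at $w_0 = z_0^V$, the product-rule contribution in which $D_\mu$ is differentiated vanishes, leaving
\[
\det \Jac_{z_0}\bigl(\overline{U}_1^\star(f-y)\bigr) \;=\; \det D_\mu(w_0) \cdot \det \Jac_{w_0}\bigl(\overline{U}_2^\star(f-y)\bigr) \cdot \det \Jac_{z_0}(\phi_V).
\]
The factor $\det D_\mu(w_0)$ is a monomial in the non-zero coordinates of $w_0$, hence in $\K^*$. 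The factor $\det \Jac_{z_0}(\phi_V)$ cannot be computed naively as $\pm \prod_k w_k / \prod_l z_l$ because that formula degenerates on the boundary stratum; instead I would view the chain-rule identity as an equality of polynomials in $z$ valid on the torus $T_n$, and specialize the resulting scalar factor at $z_0$, where the block-triangular form of $V$ guarantees it remains a non-zero monomial. With both factors identified as units, the Jacobian determinants of $\overline{U}_1^\star(f-y)$ and $\overline{U}_2^\star(f-y)$, viewed as functions of $y$, differ by a nowhere-vanishing factor, so they are either simultaneously non-zero constants in $\K^*$ or share the same vanishing locus in $y$-space.

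The hardest step will be the explicit construction of the toric bijection $\tau$ in the image argument, where one must weave together the weighted-homogeneity of the face polynomials, the semi-origin constraint on $\gamma^0$, and the block structure of $V$ into a single coherent rescaling of $T_m$ intertwining the two parametrizations. A secondary, more technical, difficulty is carrying out the Jacobian computation on the boundary stratum $T_m \times \{0\}^{n-m}$, which is cleanest to handle through polynomial identities rather than through limits of toric-chart formulas.
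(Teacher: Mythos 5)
Your reduction to the transition matrix $V=U_1U_2^{-1}$, and the block-triangularity you extract from Condition~\ref{it:cond:volume}, are correct, and the transformation identity with the monomial factors $w^{\mu_i}$ is the right bookkeeping. The gap is precisely at the step you flag as the hardest: the toric automorphism $\tau$ with $D_\mu(w)\cdot G(w)=G(\tau(w))$ cannot exist whenever some $\mu_i\neq 0$. By construction each $G_i=\overline{U}_2^\star(f_i-y_i)_\gamma$ has a non-zero constant term (the vertex monomial $x^{\tilde{\gamma}_i^0}$ is sent to $1$ by the shift $x^{-\tilde{\gamma}_i^0}$); every automorphism of $T_m$ is of the form $w\mapsto\lambda\cdot w^{M}$ and therefore preserves that constant term, whereas $w^{\mu_i}G_i(w)$ has no constant term when $\mu_i\neq 0$. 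Comparing constant terms kills the identity. Moreover, the mechanism you invoke is not available: the one-parameter subgroups attached to $C_\gamma$ move the transverse coordinates $z_{m+1},\ldots,z_n$ and fix $T_m$ pointwise, and the restricted polynomials $G_i$ are genuinely inhomogeneous on $T_m$, so there is no weighted homogeneity to exploit. Your argument therefore establishes the image statement only when the two flags select the same $0$-tuple $\gamma^0$ (so that all $\mu_i=0$ and $V$ alone does the job).

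The paper's proof of the first statement avoids comparing two transformations altogether: using that $z\mapsto z^U$ is a bijection of $T_n$ together with Item~\ref{it:lem:restr} of Lemma~\ref{lem:KhovBers2}, it identifies $\overline{U}^\star(f-y)_\gamma(T_m)$ with $(f-y)_\gamma(T_n)$, a set defined without reference to $U$. (Even that identification silently absorbs the monomial shifts $x^{\gamma_i^0}$; this is harmless for the components where $\gamma_i$ is an origin face, since Item~\ref{it:lem:const} pins $\gamma_i^0=\underline{0}$, and for the remaining components only the zero locus of $G_i$ --- insensitive to multiplication by a unit on $T_m$ --- is ever used downstream.) For the Jacobian statement, your chain-rule computation is in the same spirit as the paper's identity $(UI_x)\Jac_x(f-y)=I_z\Jac_z(g)$, and you are right that the scalar factor must be treated as a polynomial identity rather than via $\pm\prod x_i/z_i$ on the boundary stratum; that part is sound in outline, but it takes the unproved transformation identity on the boundary as input. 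The image step is the one that needs repair, most easily by adopting the paper's identification with $(f-y)_\gamma(T_n)$.
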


\begin{proof}
Clearly, the $\gamma$-chain transformation in Lemma~\ref{lem:KhovBers2} is not unique. We start with the first statement. Since $\det U=\pm 1$, for any $x\in T_n$ there exists a unique $z\in T_n$ such that $x=z^U$. Thus, for a given $y\in\K^n$ and $i=1,\ldots,n$, we have 
\[\big(f(x) - y\big)_\gamma = \sum_{a\in\gamma_i}c_a^{(i)}x^a = \sum_{a\in\gamma_i}c_a^{(i)}z^{Ua},\] where we abuse the notation here by writing $c^{(i)}_{\underline{0}}$ for $c^{(i)}_{\underline{0}}- y_i$. Let $\rho\in T_m$ be the projection of $z$ by forgetting the last $n-m$ coordinates. Then, from Item \ref{it:lem:restr}, we get \[ \sum_{a\in\gamma_i}c_a^{(i)}z^{Ua} = \sum_{a\in\gamma_i}c_a^{(i)}\rho^{Ua} = \overline{U}^\star(f_i - y_i)_\gamma(\rho).\] We use similar arguments to prove that for any $\rho\in T_m$, there exists a point $x\in T_n$ such that $\big(f_i(x) - y_i\big)_\gamma =  \overline{U}^\star(f_i - y_i)_\gamma(\rho)$, which implies \[\big(f_i - y_i\big)_\gamma(T_n)=\overline{U}^\star(f-y)_\gamma ( T_m).\] The statement follows since the above holds for any such choice of $U$. 

Now, we prove the second statement. Let $I_x$ be the matrix constructed by replacing the $(i,i)$-th value of an $n\times n$-unit matrix by $x_i$ for some $x\in T_n$. Then, the $(kl)$-th element in the $n\times n$-matrix product $(UI_x)\cdot \Jac_x(f-y)$ is equal to
\begin{equation}\label{eq:matrix-element}
\sum_{i=1}^nu_{ki}\big( x_i\partial_i(f_l-y_l)\big).
\end{equation} Assuming that $x\in\mathcal{Z}^{\tor}(f-y)$, the equation~\eqref{eq:matrix-element} is equal to $\sum_{i=1}^nu_{ki}\big( \partial_i\big(x_i (f_l-y_l)\big)\big)$, and thus to
\[\sum_{i=1}^nu_{ki}\Big(\sum_{a\in\supp f_l} (a_i +1)c_a^{(l)}x^a\Big) = \sum_{a\in\supp f_l}\Big( \sum_{i=1}^nu_{ki}a_i\Big) c_a^{(l)}x^a.\] 
For such $x$ above, there exists a unique $z\in T_n$ such that $x=z^U$, and $z\in\mathcal{Z}^{\tor}(g)$, where $g$ denotes the tuple of polynomials $\overline{U}^\star(f-y)$. Thus, let $w\in\Z^n$ denote the point $Ua$, from which we define $\mathcal{W}_l\subset\Z^n$ such that $U\cdot \supp f_l = \mathcal{W}_l$ for $l=1,\ldots,n$. Since the change of variables $x=z^U$ produces $z^w=x^a$, equation~\eqref{eq:matrix-element} can now be written as 
\[\sum_{w\in\mathcal{W}_l} w_k c_a^{(l)}z^w=\sum_{w\in\mathcal{W}_l} (w_k+1) c_a^{(l)}z^w=\sum_{w\in\mathcal{W}_l}\partial_k\left( c_a^{(l)}z^wz_k\right)=\partial_k(z_kg_l) = z_k\partial_kg_l.\] This implies $ (UI_x)\Jac_x(f-y)= (I_z)\Jac_z(g),$ and thus \begin{equation}\label{eq:Jacob-relation}
\det\Jac_z(g) = \pm\det \Jac_x(f-y)\prod_{i=1}^n x_i/z_i,
\end{equation} for any $y\in\K^n$, and any couple $(x,z)\in \mathcal{Z}^{\tor}(f-y)\times\mathcal{Z}^{\tor}(g)\subset (T_n)^2$ related by $x=z^U$. 

On the other hand, we have $\Jac_z(g)$ cannot be identically zero for all $y$ since $f$ is $T$-BG. Namely, if $\overline{\rho}$ belongs to~\eqref{eq:u-bar}, then $\Jac_{\overline{\rho}}(g)$ is either a non-zero constant, or it depends on $y\in\K^n$.  

Assume that $\Jac_{\overline{\rho}}(g)$ depends on $y$, and let $\{\varphi(t)\}_{t\in]0,1[}\subset \K^n$ be a continuous generic family of points, such that $\lim_{t\rightarrow 0}\varphi(t) = y$. Then, this produces a family of points $\{z(t)\}_{t\in]0,1[}$, such that $z(t)$ is a solution to $ \mathcal{Z}^{\tor}\big(\overline{U}^\star (f-\varphi(t))\big)$, that converges to $\overline{\rho}$ in~\eqref{eq:u-bar} whenever $t\rightarrow 0$. Indeed, since at least one of~\eqref{eq:u-bar}, or $\Jac_{\overline{\rho}}(g)$ depend on $y$, and $f$ is $T$-BG.

Therefore, there exists a continuous family $\{x(t)\}_{t\in]0,1[}$, of points in $\mathcal{Z}^{\tor}(f-\varphi(t))$ such that $x(t) = z^U(t)$, and $\lim_{t\rightarrow 0} = [p]\in\K P^n\setminus T_n$. From~\eqref{eq:Jacob-relation}, we thus have \[\det \Jac_{x(t)}\big(f-\varphi(t)\big)=0\Leftrightarrow \det\Jac_{z(t)}\big(\overline{U}^\star(f-\varphi(t))\big) =0.\] The left hand side in the equation above does not depend on the choice of $U$. This extends to the limit $[p]$ whenever $z(t)$ reaches $\overline{\rho}$. This finishes the proof. 
\end{proof}

From the first statement of Lemma~\ref{lem:invar}, the following notation is allowed. Choosing once and for all any $\gamma$-chain transformation $U$ as in Lemma~\ref{lem:KhovBers2}, the tuple $\overline{U}^\star f_\gamma$ will henceforth be referred to as the map $F_{\gamma}=(F_{1,\gamma},\ldots,F_{n,\gamma}):\K^m\rightarrow\K^n$. 

\subsection{Semi-origin tuples}\label{subsec:semi-origin} Let $\theta\subset\{1,\ldots,n\}$ be the origin-certification set for $\gamma$. The tuple $ \big(\overline{U}^\star f_{i,\gamma}\big)_{i\in\theta}$ will be represented as a map $F_{\theta,\gamma}:\K^m \rightarrow\K^{|\theta|}$, $z \mapsto \big(F_{i,\gamma}(z)\big)_{i\in\theta}$, and simply by $F_{\gamma}$ whenever $\theta=\{1,\ldots,n\}$. If $\gamma$ is an origin tuple, then $F_\gamma\big( T_m \big)\subset\K^n$ is called the \emph{$\gamma$-parametrized} set of $f$. 

 Assume that $\gamma$ is a strictly semi-origin tuple, and let $\theta^c$ denote $\{1,\ldots,n\}\setminus\theta$. The algebraic set $V_{\theta^c}^{\gamma}(f)= \big\lbrace z\in T_m~|~F_{i,\gamma}(z)=0,~i\in\theta^c \big\rbrace \subset T_m$ is either empty, or satisfies
\[\dim V_{\theta^c}^{\gamma}(f) = \begin{cases} m-\theta^c  &\mbox{if } m>\theta^c\\  
0 & \mbox{otherwise. } \end{cases} \] This can be deduced from $f$ being $T$-BG, and from $V_{\theta^c}^{\gamma}(f)$ not being dependent on $y$. Now, if $\Theta:\K^n\rightarrow \K^{|\theta|}$ is the projection $(y_1,\ldots,y_n)\mapsto (y_i)_{i\in\theta}$, then the subset in $\K^n$
\begin{equation}\label{eq:gamma-lift}
\left\lbrace\left. \Theta^{-1} \big( F_{\theta,\gamma}(z) \big)\right|~z\in V_{\theta^c}^{\gamma}(f)\right\rbrace,
\end{equation} is isomorphic to $\K^{|\theta^c|}\times F_{\theta,\gamma}\big(V_{\theta^c}^{\gamma}(f)\big)$. 

Since $F_{\theta,\gamma}$ is well defined, we have $V_{\theta^c}^\gamma(f)$ is empty if and only if~\eqref{eq:gamma-lift} is empty. We call the latter the \emph{$\gamma$-lifted set of $f$}. Our first corollary concerns semi-origin tuples of $\underline{\New} f$.

\begin{corollary}\label{cor:Properness-through-Faces1} 
The union $S_f^*\cup S_f$ contains the $\gamma$-parametrized and $\gamma$-lifted sets, for all minimized semi-origin tuples $\gamma$ of $\underline{\New} f$. The same is true for the set $S^*_f$ (resp. $S_f$) alone for all minimized tuples $\gamma$ above, and additionally being (resp. not) basic.
\end{corollary}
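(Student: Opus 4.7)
The plan is to reduce the claim directly to Proposition~\ref{prop:solutions-infty2} by producing, for each point $y$ in the $\gamma$-parametrized or $\gamma$-lifted set, a solution $\overline{\rho}\in T_m\times\{0\}^{n-m}$ to $\overline{U}^\star(f-y)=\underline{0}$ for some $\gamma$-chain transformation $U$ supplied by Lemma~\ref{lem:KhovBers2}. Once such a $\overline{\rho}$ is in hand, the semi-origin hypothesis on $\gamma$ triggers the first alternative of Proposition~\ref{prop:solutions-infty2}, placing $y$ in $S_f^*\cup S_f$, while the basic/non-basic dichotomy built into the same proposition locates $y$ in $S_f^*$ or $S_f$ exactly as asserted.

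First I would fix $U$ so that its associated flag $\gamma^0\preceq\cdots\preceq\gamma^{n-1}$ has $\gamma^0$ itself semi-origin; this is legitimate by the argument at the end of the proof of Item~\ref{it:lem:const}. Letting $\theta\subset\{1,\ldots,n\}$ denote the origin-certification set of $\gamma$, Item~\ref{it:lem:const} of Lemma~\ref{lem:KhovBers2} then specialises to $\overline{U}^\star(f_i-y_i)_\gamma = F_{i,\gamma}-y_i$ for $i\in\theta$ and $\overline{U}^\star(f_i-y_i)_\gamma = F_{i,\gamma}$ for $i\in\theta^c$. Combined with Item~\ref{it:lem:restr}, any candidate point $\overline{\rho}=(\rho,\underline{0})\in T_m\times\{0\}^{n-m}$ will then satisfy $g_i(\overline{\rho})=F_{i,\gamma}(\rho)-y_i$ for $i\in\theta$ and $g_i(\overline{\rho})=F_{i,\gamma}(\rho)$ for $i\in\theta^c$, where $g_i:=\overline{U}^\star(f_i-y_i)$.

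Next I would split into the two cases. When $\gamma$ is an origin tuple and $y=F_\gamma(\rho)$ lies in the $\gamma$-parametrized set, the formulas above immediately give $g_i(\overline{\rho})=0$ for every $i$. When $\gamma$ is strictly semi-origin and $y$ lies in the $\gamma$-lifted set, I would pick $\rho\in V_{\theta^c}^{\gamma}(f)$ with $F_{\theta,\gamma}(\rho)=(y_i)_{i\in\theta}$; this kills $g_i(\overline{\rho})$ for $i\in\theta$ by construction of $y$, and for $i\in\theta^c$ because $\rho$ lies in $V_{\theta^c}^\gamma(f)$. In either case $\overline{\rho}$ solves $\overline{U}^\star(f-y)=\underline{0}$ in $T_m\times\{0\}^{n-m}$, and Proposition~\ref{prop:solutions-infty2} delivers both assertions of the corollary simultaneously.

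The lone exception is $y=f(\underline{0})$, which is ruled out by the hypothesis of Proposition~\ref{prop:solutions-infty2}; since the $\gamma$-parametrized and $\gamma$-lifted sets are positive-dimensional in the cases of interest, I would absorb this single point either by a continuity argument approaching $f(\underline{0})$ within the parametrized or lifted set, or by invoking closedness of $S_f$. The main obstacle, if any, is the bookkeeping: unwinding the factor $x^{-\gamma_i^0}$ hidden in $\overline{U}^\star$, respecting the semi-origin choice of $\gamma^0$ needed to put $\gamma_i^0$ at the origin for $i\in\theta$, and checking that the constant-term analysis of Item~\ref{it:lem:const} produces exactly the cancellation $F_{i,\gamma}(\rho)-y_i=0$. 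Once these identifications are in place, the corollary becomes a direct transcription of Proposition~\ref{prop:solutions-infty2}.
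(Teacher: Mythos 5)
Your proposal is correct and follows essentially the same route as the paper: fix a $\gamma$-chain transformation with semi-origin $\gamma^0$, use Items~\ref{it:lem:restr} and~\ref{it:lem:const} of Lemma~\ref{lem:KhovBers2} to turn the defining equations of the $\gamma$-parametrized (resp.\ $\gamma$-lifted) set into a solution $\overline{\rho}\in T_m\times\{0\}^{n-m}$ of $\overline{U}^\star(f-y)=\underline{0}$, and then invoke both claims of Proposition~\ref{prop:solutions-infty2}. Your explicit treatment of the exceptional point $y=f(\underline{0})$ is a small refinement over the paper's proof, which simply asserts $y\neq f(\underline{0})$.
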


\begin{proof}
Assume first that a point $y\in\K^n$ belongs to the $\gamma$-parametrized set of $f$ corresponding to an origin tuple $\gamma$ of $\underline{\New} f$. Then, there exists $(z,y)\in T_m\times\K^n$ such that $F_\gamma(z) = y$. From Item \ref{it:lem:const}, and the description in Section~\ref{subsec:semi-origin}, we have $F_\gamma(z) - y = \overline{U}^\star f_\gamma(z) - y =\overline{U}^\star(f - y)_\gamma(z)= \underline{0}$. 
Item~\ref{it:lem:restr} thus shows that $\overline{U}^\star(f-y)=\underline{0}$ has a solution $\overline{\rho}\in T_m\times\{0\}^{n-m}$. Then, Proposition~\ref{prop:solutions-infty2} shows that $y\in S^*_f\cup S_f$ and $y\neq f(\underline{0})$. The second claim of the corollary follows from the second claim of Proposition~\ref{prop:solutions-infty2}. 

Assume now that $y$ belongs to the $\gamma$-lifted set of $f$ corresponding to a strictly semi-origin tuple $\gamma$ of $\underline{\New} f$. Then, there exists $z\in T_m$ such that 
\begin{equation}\label{eq:F-gamma-theta}
F_{i,\gamma}(z)=y_i,~\forall~i\in\theta\quad\text{and}\quad F_{i,\gamma}(z) = 0,~\forall~i\in\theta^c,
\end{equation} where $\theta$ is the origin-certification set for $\gamma$. This implies that
\[ \overline{U}^\star f_{i,\gamma}(z)-y_i =0,~\forall~i\in\theta\quad\text{and}\quad \overline{U}^\star f_{i,\gamma}(z) = 0,~\forall~i\in\theta^c.\] Item \ref{it:lem:const} implies that $\overline{U}^\star f_{i,\gamma}-y_i = \overline{U}^\star(f_{i}-y_i)_\gamma$ for $i\in\theta$, and $\overline{U}^\star f_{i,\gamma}= \overline{U}^\star(f_{i}-y_i)_\gamma$ otherwise. We conclude the result for this case the same way we did in the above paragraph.
\end{proof}

\subsection{Almost semi-origin tuples}\label{subsec:Almost} \label{sec:other-tuples} Assume that the set~\eqref{eq:u-bar}, now written as $\overline{\mathcal{Z}^{\tor}\big(\overline{U}^\star(f-y)_\gamma\big)}$, is non-empty, and denote by $g$ the tuple $\overline{U}^\star(f-y)$. The polynomial $J^\gamma\in\K[z_1,\ldots,z_n,y_1,\ldots,y_n]$, defined as the determinant of the Jacobian $n\times n$-matrix $\left(\partial_i g_j\right)_{ij}$, realizes the set \[J\!\mathcal{H}_\gamma(f)=\left\lbrace y\in\K^n~\left|~J^\gamma(z,y)=0,~z\in\overline{\mathcal{Z}^{\tor}\big(\overline{U}^\star(f-y)_\gamma\big)} \right.\right\rbrace.\] From the second statement in Lemma~\ref{lem:invar}, this set does not depend on $U$. We will call $J\!\mathcal{H}_\gamma(f)$ the \emph{$\gamma$-Jacobian hyperplanes} of $f$ due to the following.

\begin{corollary}\label{cor:Properness-through-Faces3}
Assume that $\gamma$ is \emph{not} a semi-origin tuple of $\underline{\New} f$. Then, the set $J\!\mathcal{H}_\gamma(f)$ is a (possibly empty) collection of hyperplanes in $\K^n$ contained in the Jelonek set of $f$.
Moreover, if $J\!\mathcal{H}_\gamma(f)$ is non-empty, then $\gamma$ is not basic.
\end{corollary}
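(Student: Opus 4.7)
The approach combines a structural analysis of the Jacobian determinant $J^\gamma(z,y)$, viewed as a polynomial in $y$, with Proposition~\ref{prop:solutions-infty2}. I address the three claims of the corollary in turn.

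The non-basic claim is automatic from the hypothesis, independently of whether $J\!\mathcal{H}_\gamma(f)$ is empty. If $\gamma$ were basic, with every member contained in a coordinate hyperplane $\{x_k=0\}$, then $e_k$ would belong to the minimizing cone $C_\gamma$; since $e_k\in\R_{\geq 0}^n$ and $\New f_j\subset\R_{\geq 0}^n$ contains the origin as a vertex, $e_k$ minimizes the origin of each $\New f_j$, forcing every $\gamma_j$ to be an origin face, which contradicts the not-semi-origin hypothesis. For the inclusion $J\!\mathcal{H}_\gamma(f)\subset S_f$, take $y\in J\!\mathcal{H}_\gamma(f)$ different from $f(\underline 0)$ (the exceptional point is handled separately). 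By Item~\ref{it:lem:const} of Lemma~\ref{lem:KhovBers2} and the not-semi-origin assumption, $\overline{U}^\star(f-y)_\gamma=\overline{U}^\star f_\gamma$ is independent of $y$, so $V:=\overline{\mathcal{Z}^{\tor}\bigl(\overline{U}^\star(f-y)_\gamma\bigr)}$ is fixed. Pick $z\in V$ with $J^\gamma(z,y)=0$. By Item~\ref{it:lem:restr} of the same lemma the lifted point $\overline{\rho}=z\in T_m\times\{0\}^{n-m}$ solves $\overline{U}^\star(f-y)=\underline 0$, and the vanishing of $J^\gamma$ says $\overline{\rho}$ is not a generic solution. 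Proposition~\ref{prop:solutions-infty2}, in the not-semi-origin, non-generic case, then places $y\in S_f^*\cup S_f$, and the last clause of that proposition combined with the non-basic-ness of $\gamma$ forces $y\in S_f$.

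The substantial content is the first claim, that $J\!\mathcal{H}_\gamma(f)$ is a union of affine hyperplanes. Step one: $V$ is a finite set, because $\overline{U}^\star f_\gamma$ is an overdetermined system of $n$ polynomials in $m\leq n-1$ variables and the $T$-BG hypothesis forces each solution to be generic with the $m\times n$ Jacobian of maximal rank $m$, hence isolated. Step two: for each $z\in V$, the polynomial $J^\gamma(z,y)$ is an affine linear polynomial in $y$. Writing $g_j=\overline{U}^\star f_j-y_j z^{v^{(j)}}$ with $v^{(j)}:=-U\gamma_j^0\in\Z_{\geq 0}^n$, each Jacobian column has the form $M_j(z)-y_jN_j(z)$ where $N_j(z)=\nabla z^{v^{(j)}}$, so multilinearity gives the expansion $J^\gamma(z,y)=\sum_{S\subseteq\{1,\ldots,n\}}(-1)^{|S|}y^S\det_{(S)}(z)$. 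The task is to show $\det_{(S)}(\overline{\rho})=0$ for every $|S|\geq 2$. Evaluating at $\overline{\rho}\in T_m\times\{0\}^{n-m}$, the vector $N_j(\overline{\rho})$ is supported in a specific coordinate subspace dictated by the last $n-m$ coordinates of $v^{(j)}$: either in the first $m$ rows (when $v^{(j)}_i=0$ for all $i>m$), or in a single row $l>m$ (when $v^{(j)}_l=1$ and $v^{(j)}_i=0$ for $i>m$, $i\neq l$), or identically zero. The almost-semi-origin property, guaranteed by Item~\ref{it:semi-or} of Proposition~\ref{prop:solutions-infty1} since $\gamma$ is not semi-origin and $V\neq\emptyset$, pins down how the indices distribute into these classes, and combined with the $T$-BG non-degeneracy this forces every $\det_{(S)}(\overline{\rho})$ with $|S|\geq 2$ to vanish by column-dependence. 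Step three: each nonzero linear form $J^\gamma(z,\cdot)$ cuts out a hyperplane in $\K^n$; the identically-zero alternative is ruled out by $\dim S_f\leq n-1$ together with the containment $\{J^\gamma(z,\cdot)=0\}\subset S_f$ established in Claim~(2). Taking the finite union over $z\in V$ completes the argument.

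The main obstacle is Step two: verifying the systematic vanishing of $\det_{(S)}(\overline{\rho})$ for all $|S|\geq 2$. This requires a careful case analysis on the partition of $\{1,\ldots,n\}$ into the three classes of $N$-columns, using the almost-semi-origin condition together with the $T$-BG property to exclude configurations in which two ``first-class'' columns could contribute non-trivially to some higher-order $y^S$-coefficient. This step is the technical crux and ties the algebra of the exponents $v^{(j)}$ back to the geometry of the flag $\gamma^0\preceq\gamma^1\preceq\cdots\preceq\gamma^{n-1}$ used in the construction of the $\gamma$-chain transformation $U$.
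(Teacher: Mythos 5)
Your overall route (fixing $V=\overline{\mathcal{Z}^{\tor}\big(\overline{U}^\star f_\gamma\big)}$ via Item~\ref{it:lem:const}, finiteness from $T$-BG, and Proposition~\ref{prop:solutions-infty2} for the containment) is the paper's, but the central claim of the corollary --- that $J^\gamma(\overline{\rho},\cdot)$ is affine in $y$, so that $J\!\mathcal{H}_\gamma(f)$ is a union of hyperplanes --- is precisely the step you do not prove. You reduce it to the vanishing at $\overline{\rho}$ of all coefficients of the monomials $y^S$ with $|S|\geq 2$ and then declare this ``the technical crux'' requiring a case analysis that is never carried out; as written, the first statement of the corollary is not established. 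The paper closes this point much more cheaply: by the argument at the end of the proof of Item~\ref{it:semi-or} of Proposition~\ref{prop:solutions-infty1}, after evaluation at $\overline{\rho}$ each coordinate $y_i$ can survive only in the entry $\partial_n g_i$, i.e.\ all $y$-dependence of the evaluated Jacobian matrix is confined to a single line of entries; expanding the determinant along that line gives degree at most one in $y$ directly, with no subset-by-subset analysis of your vectors $v^{(j)}$.

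There is a second genuine gap: your claim that non-basicness is ``automatic from the hypothesis.'' If every $\gamma_j\subset\{x_k=0\}$, then $e_k$ minimizes the face $\New f_j\cap\{x_k=0\}$, which contains $\gamma_j$ and the origin but need not equal $\gamma_j$; so you cannot conclude that each $\gamma_j$ is an origin face, and the purported contradiction evaporates. In fact basic, non-semi-origin minimized tuples exist: for $n=3$ take all three Newton polytopes equal to the unit simplex and $\alpha=(1,-1,-1)$, which minimizes in each the edge $[e_2,e_3]\subset\{x_1=0\}$, a basic $1$-tuple none of whose members contains the origin. The paper instead deduces non-basicness from the fact that a non-generic solution forces $\gamma$ to be almost semi-origin (again via the proof of Item~\ref{it:semi-or}), and an almost semi-origin minimized tuple cannot be basic. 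This matters beyond the last sentence of the corollary: your containment argument needs ``$\gamma$ not basic'' to upgrade $y\in S^*_f\cup S_f$ to $y\in S_f$ via the last clause of Proposition~\ref{prop:solutions-infty2}, so the gap propagates there as well (your exclusion of the identically-zero Jacobian can still be salvaged by using Lemma~\ref{lem:sf-ext:non-empty} in place of $\dim S_f\leq n-1$).
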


\begin{proof} Since $\gamma$ is not semi-origin, Item \ref{it:lem:const} shows that $\overline{U}^\star(f-y)_\gamma=\overline{U}^\star f_\gamma$. Hence, we have 
\begin{equation}\label{eq:zy=z}
\overline{\mathcal{Z}^{\tor}\big(\overline{U}^\star(f-y)_\gamma\big)} = \overline{\mathcal{Z}^{\tor}\big(\overline{U}^\star f_\gamma\big)}.
\end{equation} On the other hand, the tuple $f$ being $T$-BG implies that points in~\eqref{eq:zy=z} are generic solutions to $\overline{U}^\star f=\underline{0}$, whose number is thus finite. Proposition~\ref{prop:solutions-infty2} shows that if these solutions belong to $J\!\mathcal{H}_\gamma(f)$, then $y$ belongs to $S^*_f$, or $S_f$ depending whether $\gamma$ is basic or not. Then, the set $J\!\mathcal{H}_\gamma(f)$ is a finite (possibly empty) collection of manifolds in $\K^n$. These manifolds are actually hypersurfaces since $J^{\gamma}(z,y)$ is a polynomial in $y$.

Recall, from the last part in the proof of Item~\ref{it:semi-or} in Proposition~\ref{prop:solutions-infty1}, that for $i=1,\ldots,n$, the coordinate $y_i$ can appear only in the $(i,n)$-th element of $\big(\partial_i g_j\big)_{ij}$ after evaluating at points $\overline{\rho}$ in~\eqref{eq:zy=z}. Hence, for any $y\in\K^n$, the polynomial $J^\gamma(\overline{\rho},y)$ has degree at most one in $y$, making $J\!\mathcal{H}_\gamma(f)$ a union of hyperplanes in $\K^n$. This concludes the first statement. 

Since an almost semi-origin minimized tuple cannot be basic, we are done.
\end{proof} 
\subsection{The full description}For a given minimized semi-origin tuple $\delta$ of $\underline{\New} f$, the notation $\mathcal{X}_\delta(f)$ will refer to the $\delta$-parametrized set of $f$ if $\delta$ is an origin tuple, and to the $\delta$-lifted set of $f$ (i.e. one appearing in~\eqref{eq:gamma-lift}), otherwise.

\begin{corollary}\label{cor:main:1}
Let $f:\K^n\rightarrow\K^n$ be a $T$-BG map. Then, the Jelonek set $S_f$ minus $f(\underline{0})$ is equal to 
\begin{equation}\label{eq:union-Gamma}
\cup_{\delta}\mathcal{X}_\delta(f)\bigcup \cup_{\delta}J\!\mathcal{H}_\delta(f),
\end{equation} where $\delta$ runs through all non-basic semi-origin/almost semi-origin tuples of $\underline{\New} f$ satisfying Item~\ref{it:pos-dim}. Moreover, the toric non-properness set $S^*_f$ coincides with the union of all $\gamma$-parametrized sets $\mathcal{X}_\gamma(f)$ of $f$, such that $\gamma$ is a basic origin tuple of $\underline{\New} f$. 
\end{corollary}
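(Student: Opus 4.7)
My plan is to split the first equality into two inclusions and derive the second statement by rerunning the same dichotomy along the basic branch. The forward direction is essentially Proposition~\ref{prop:solutions-infty1}, while the reverse direction is essentially Corollaries~\ref{cor:Properness-through-Faces1} and~\ref{cor:Properness-through-Faces3}; the proof is therefore mostly one of repackaging the data these results supply into memberships of the sets $\mathcal{X}_\delta(f)$ and $J\!\mathcal{H}_\delta(f)$.

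For the forward direction of $S_f\setminus\{f(\underline{0})\}\subseteq\bigcup_\delta\mathcal{X}_\delta(f)\cup\bigcup_\delta J\!\mathcal{H}_\delta(f)$, I would fix $y\in S_f\setminus\{f(\underline{0})\}$ and invoke Proposition~\ref{prop:solutions-infty1} to obtain a minimized $m$-tuple $\gamma$ with $m\geq 1$, a $\gamma$-chain transformation $U$, and a solution $\overline{\rho}=(\rho_1,\ldots,\rho_m,0,\ldots,0)$ of $\overline{U}^\star(f-y)=\underline{0}$. Item~\ref{it:basic} makes $\gamma$ non-basic and Item~\ref{it:pos-dim} imposes the zero-dimensional-members condition. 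Then I would split on Item~\ref{it:semi-or}. When $\gamma$ is semi-origin, Items~\ref{it:lem:restr} and~\ref{it:lem:const} of Lemma~\ref{lem:KhovBers2} rewrite $\overline{U}^\star(f_i-y_i)_\gamma(\rho_1,\ldots,\rho_m)=0$ as $F_{i,\gamma}(\rho)=y_i$ when $i$ lies in the origin-certification set $\theta$ of $\gamma$, and as $F_{i,\gamma}(\rho)=0$ otherwise; this places $\rho\in V_{\theta^c}^{\gamma}(f)$ (trivially when $\theta=\{1,\ldots,n\}$) and exhibits $y$ in the $\gamma$-parametrized or $\gamma$-lifted set, i.e.\ in $\mathcal{X}_\gamma(f)$. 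When $\gamma$ is almost semi-origin with non-generic $\overline{\rho}$, the vanishing of $\det\Jac_{\overline{\rho}}\bigl(\overline{U}^\star(f-y)\bigr)$ is exactly the defining condition of $J\!\mathcal{H}_\gamma(f)$, so $y\in J\!\mathcal{H}_\gamma(f)$.

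The reverse inclusion I would derive by citing Corollaries~\ref{cor:Properness-through-Faces1} and~\ref{cor:Properness-through-Faces3} term by term, each of which already rules out the basic branch on the $S_f$ side. For the $S^*_f$ claim, I would rerun the forward argument with $y\in S^*_f$, which by Item~\ref{it:basic} now produces a \emph{basic} minimized tuple $\gamma$. The key extra step is to upgrade ``semi-origin'' to ``origin'': being basic means the asymptotic vector $\alpha$ (as in the proof of Item~\ref{it:basic}) lies in $(\R_{\geq 0})^n$, and the standing hypothesis $f(\underline{0})\in T_n$ forces $\underline{0}\in\New f_i$ for every $i$, hence $\min_{a\in\New f_i}\langle a,\alpha\rangle=0$ is attained at $\underline{0}$; so every $\gamma_i$ contains $\underline{0}$ and $\gamma$ is an origin tuple. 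This in particular excludes the almost semi-origin branch of Item~\ref{it:semi-or}. Pairing this with the basic clause of Corollary~\ref{cor:Properness-through-Faces1} closes the equality.

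I expect the main obstacle to be the bookkeeping between the two guises of $\mathcal{X}_\delta(f)$ (parametrized versus lifted) and the need to certify that the origin-certification set $\theta$ read off from $\overline{\rho}$ in the forward direction coincides with the $\theta$ used in definition~\eqref{eq:gamma-lift}. Item~\ref{it:lem:const} of Lemma~\ref{lem:KhovBers2} is the bridge here, translating the presence or absence of $y_i$ in $\overline{U}^\star(f_i-y_i)_\gamma$ into the origin-face condition on $\gamma_i$; once that identification is clean, the rest is routine.
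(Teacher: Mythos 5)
Your proposal is correct and follows essentially the same route as the paper's proof: the forward inclusion via Proposition~\ref{prop:solutions-infty1} combined with Lemma~\ref{lem:KhovBers2}, the reverse inclusion by citing Corollaries~\ref{cor:Properness-through-Faces1} and~\ref{cor:Properness-through-Faces3}, and the $S^*_f$ statement by rerunning the argument along the basic branch. Your explicit justification that a basic tuple must be an origin tuple (via $\underline{0}\in\New f_i$ and $\alpha\in(\R_{\geq 0})^n$) is a detail the paper leaves implicit, and it is a welcome addition.
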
 

\begin{proof} We only treat the direction ``$\subset$'' since the converse is proven in Corollaries~\ref{cor:Properness-through-Faces1}, and~\ref{cor:Properness-through-Faces3}. For a given $y\in S_f$, let $\gamma$ be an $m$-tuple, together with its corresponding $\gamma$-chain transformation $U$, both satisfying Proposition~\ref{prop:solutions-infty1}. Thus, Items~\ref{it:ex-sol} and~\ref{it:lem:restr} show that $\overline{U}^\star(f-y)_\gamma=\underline{0}$ has a solution in $ T_m$. This also implies Item~\ref{it:pos-dim} for $\gamma$. Moreover, Item~\ref{it:basic} shows that $\gamma$ is non-basic.

 Item~\ref{it:lem:const} shows that, if $\gamma$ is an origin tuple, then for some $z\in  T_m$, we have $\overline{U}^\star(f-y)_\gamma(z)=\underline{0}\Leftrightarrow F_\gamma(z) = y$. If it is a strictly semi-origin one, we have~\eqref{eq:F-gamma-theta}, where $\theta\subset\{1,\ldots,n\}$ is the origin-certification set for $\gamma$. This shows that $y$ belongs to $\mathcal{X}_\gamma(f)$. 
 
Assume that $\gamma$ is \emph{not} a semi-origin tuple of $\underline{\New} f$. Then, Item~\ref{it:semi-or} shows that $\gamma$ is almost semi-origin, and that $\overline{U}^\star(f-y) =\underline{0}$ has a solution $\overline{\rho}\in  T_m\times\{0\}^{n-m}$ that is not generic. This implies that $\det \Jac_{\overline{\rho}}\big(\overline{U}^\star(f-y)\big) = 0$, and thus $y\in J\!\mathcal{H}_\gamma(f)$. This finishes the statement for $S_f$.

The statement for $S^*_f$ follows similar steps as above. The difference here being that a basic tuple can only be be an origin one.
\end{proof}

\begin{remark}\label{rem:independence}
Lemma~\ref{lem:invar}, shows the following. Once the minimized tuple $\gamma$ of $\underline{\New} f$ is fixed, any choice of $\gamma$-chain transformation $U$ of $\underline{\New} f$ is enough to compute the sets $\mathcal{X}_\gamma (f)$, or $J\!\mathcal{H}_\gamma(f)$. 
\end{remark}

\subsection{Examples}\label{subsec:examples} We use our results to compute the set $S^*_f\cup S_f$ for two examples.

\begin{example}\label{ex:Jel-set1}
Recall the map $f:\K^2\rightarrow\K^2$ from Example~\ref{ex:bivar-non-proper}. The three vectors $(1,0)$, $(0,-1)$, and $(-1,1)$ minimize three respective couples $\gamma$, $\delta$, and $\omega$ of edges of the corresponding Newton triangles appearing in Figure~\ref{fig:cones} on the right. These are the only choices of minimized tuples that satisfy Item \ref{it:pos-dim}. Choose the three chain transformations $U$, $V$, and $W$ defined by 
\[\begin{matrix*}[r]
  -1 & 1  \\
  1 & 0
 \end{matrix*}
\quad\text{,}\quad
 \begin{matrix*}[r]
  1 & 0 \\
  0 & -1
 \end{matrix*}
 \quad \text{and}\quad 
  \begin{matrix*}[r]
  1 & 0 \\
  -1 & 1
 \end{matrix*}
 \quad\text{,}\] respectively. We thus have that $\overline{U}^\star(f - y)_\gamma$, $\overline{V}^\star(f - y)_{\delta} $, and $\overline{W}^\star(f - y)_{\omega}$ are equal to \[ (-z_1 - y_1, 2z_1^2 - y_2)\ ,\quad  (z_1 - 1, (z_1 - 1)(z_1-2)),\quad\text{and}\quad ({z}_1 - y_1, {z}_1^2 - y_2).\] Since both $\gamma$, and $\omega$ are origin couples, we compute $\mathcal{X}_\gamma(f)$, and $\mathcal{X}_{\omega}(f)$ to recover $S^*_f\setminus\{(0,0)\}=\left\lbrace(2t^2,t)~|~t\in\K^*\right\rbrace$, and $\left\lbrace(t^2,t)~|~t\in\K^*\right\rbrace\subset S_f$ respectively. On the other hand, the couple $\delta$ is an almost semi-origin one. Therefore, the $\delta$-Jacobian hyperplane is computed as the determinant of the Jacobian matrix 
 \[\begin{matrix}
  1 & 2z_1 - 3  \\
  y_1 & -2z_2y_2\\
 \end{matrix}\] of $\overline{V}^\star(f -y)$, evaluated at $z=(1,0)$. This computation will show that $J\!\mathcal{\mathcal{H}}_{\delta}(f)=\left\lbrace y_1 = 0\right\rbrace$. We thus recover $S_f^*\cup S_f$.
\end{example}

\begin{example}\label{ex:Jel-set2}
Consider any map $f:\K^2\rightarrow\K^2$ whose couple of Newton polytopes $\underline{\New} f$ is represented in Figure~\ref{fig:2fan+poly}, and let $v$ denote the only origin $0$-couple of $\underline{\New} f$. Then, we have $\mathcal{X}_v(f) = f(\underline{0})$, and from Lemma~\ref{lem:f0}, we have $f(\underline{0})\in S_f$. On the other hand, exactly three non-basic minimized $1$-couples $\delta$, $\delta'$, and $\delta''$ of $\underline{\New} f$ satisfy Item \ref{it:pos-dim}. Corollary~\ref{cor:main:1} shows that 
\[S_f = \mathcal{X}_v(f)\cup\mathcal{X}_\delta (f)\cup J\!\mathcal{H}_{\delta'}(f) \cup\mathcal{X}_{\delta''}(f),\] which contains the $\delta'$-Jacobian line, the $\delta$-parametrized, and $\delta''$-lifted sets. One can check that the latter correspond to sets of the form $\left\lbrace y_1 - c =0\right\rbrace$, $\left\lbrace (a_0 + a_1 t +a_3 t^2,~b_0 + b_1 t)~|~t\in\K^*\right\rbrace$,  and $\left\lbrace  y_1 - d  =0 \right\rbrace$ respectively, where $a_i$, and $b_j$ are some coefficients in $f$, and $c,d$ are constants.
\end{example}

\begin{figure}
\centering
\begin{tikzpicture} [scale=1.5]
    \tikzstyle{conefill1} = [fill=red!20,fill opacity=0.8]          
    \tikzstyle{conefill} = [pattern = north east lines, pattern color=gray]          
    \tikzstyle{ann} = [fill=white,font=\footnotesize,inner sep=1pt] 
    \tikzstyle{ghostfill} = [fill=white]	
    \tikzstyle{ghostdraw} = [draw=black!50]					
	\tikzstyle{ann1} = [font=\footnotesize,inner sep=1pt] 

\begin{scope}[xshift = -5cm, yshift = 1cm]


	\draw[arrows=->, line width=0.36 mm, ] (0,0)--(+1.8,-0.9);   
	\draw[arrows=->, line width=0.36 mm, ] (0,0)--(+1,-1);   
	\draw[arrows=->, line width=0.36 mm,] (0,0)--(-1.2,0);
	\draw[arrows=->, line width=0.36 mm,] 				(0,0)--(+1.2,0);
	\draw[arrows=->, line width=0.36 mm, ] 				(0,0)--(-1.8,+0.9); 
	\draw[arrows=->, line width=0.36 mm, ] (0,0)--(-1,+1);   

	\draw [fill] (0,0) circle [radius=0.03];
		

	\node[ann1] at (-1.1,1.1)   {$\alpha$};	
	\node[ann1] at (1.1,-1.1) {$\alpha'$};	
	\node[ann1] at (2,-0.95) {$\alpha''$};	
\end{scope}

\begin{scope}[xshift = -2 cm]


	\draw[arrows=->,line width=0.2 mm, dotted] (0,0)-- (0,1.5); 
	
	\draw[arrows=->,line width=0.2 mm, dotted] (0,0)-- (1.5,0);



	\draw [fill, color = blue] (0,0) circle [radius=0.03];	
	
	\draw [fill, color = blue] (0.5,0.5) circle [radius=0.03];	
	
	\draw [fill, color = blue] (1,1) circle [radius=0.03];	
	
	\draw [fill, color = blue] (1,1.5) circle [radius=0.03];		
	
	\draw [fill, color = blue] (0.5,1) circle [radius=0.03];	
	
	
	\draw[ line width=0.35 mm, color = blue] (0,0) -- (1, 1) -- (1, 1.5) -- (0.5, 1);
	
	\draw[ line width=0.35 mm, color = blue] (0,0) -- (0.5, 1);
	
\end{scope}	
	

	\draw[arrows=->,line width=0.2 mm, dotted] (0,0)-- (0,1.5); 
	
	\draw[arrows=->,line width=0.2 mm, dotted] (0,0)-- (1.5,0);



	\draw [fill] (0,0) circle [radius=0.03];	
	
	\draw [fill] (0.5,0.5) circle [radius=0.03];
	
	\draw [fill] (1,2) circle [radius=0.03];	
	
	\draw [fill] (0.5,1.5) circle [radius=0.03];
	
	\draw [fill] (0,0.5) circle [radius=0.03];	
	
	\draw [fill] (0,0.5) circle [radius=0.03];		
	
	
	\draw[ line width=0.35 mm] (0,0)--(0.5,0.5) -- (1,2) -- (0.5,1.5) -- (0,0.5);
	
	\draw[ line width=0.35 mm] (0,0)-- (0,0.5);
	
	
	\node[ann1] at (-1.3,0.4)   {$\delta_1$};
	\node[ann1] at (0.4,0.16)   {$\delta_2$};		
	\node[ann1] at (-1.3,1.4)   {$\delta'_1$};
	\node[ann1] at (0.7,1.95)   {$\delta'_2$};	
	\node[ann1] at (-1.85,0.65)   {$\delta''_1$};
	\node[ann1] at (0.15,1.1)   {$\delta''_2$};	
\end{tikzpicture}
\caption{The inner normal fan of Proposition~\ref{prop:normal-fan} corresponding to the couple of polytopes on the right.
}\label{fig:2fan+poly}
\end{figure}
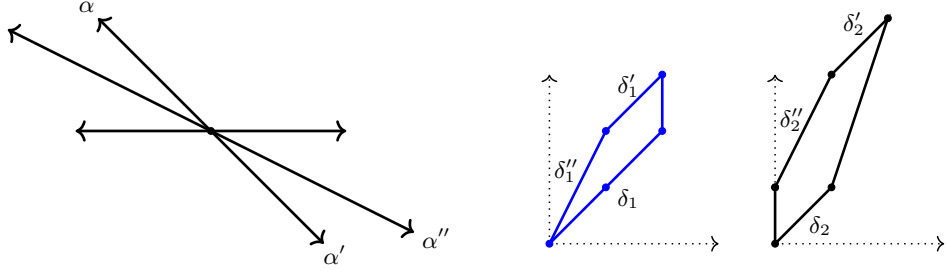

\section{(non) Properness and uniruledness}\label{sec:app} We start with the constraints for properness.

\begin{corollary}\label{cor:sufficient}
Let $f:\K^n\rightarrow\K^n$ be a $T$-BG map. Assume that $\underline{\New} f$ does not have minimized almost semi-origin tuples, and that all its minimized semi-origin tuples are basic. Then $f$ is proper. 
\end{corollary}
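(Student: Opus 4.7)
The plan is to read the conclusion directly off Corollary~\ref{cor:main:1}. That corollary expresses $S_f \setminus \{f(\underline{0})\}$ as the union
\[
\bigcup_{\delta} \mathcal{X}_\delta(f) \;\cup\; \bigcup_{\delta} J\!\mathcal{H}_\delta(f),
\]
where $\delta$ ranges over all \emph{non-basic} semi-origin or almost semi-origin tuples of $\underline{\New} f$ satisfying Item~\ref{it:pos-dim}. The first step is to check that both index sets are empty under our two hypotheses. The assumption that every minimized semi-origin tuple of $\underline{\New} f$ is basic immediately rules out every non-basic semi-origin $\delta$, so the first union is empty. The assumption that $\underline{\New} f$ admits no minimized almost semi-origin tuples rules out every $\delta$ in the second union (basic or not). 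Hence $S_f \setminus \{f(\underline{0})\} = \emptyset$, i.e.\ $S_f \subseteq \{f(\underline{0})\}$.

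The second step is to upgrade this containment to the equality $S_f = \emptyset$, which by definition of the Jelonek set is exactly properness of $f$. For this I would invoke the lower bound $\dim S_f \geq 1$ that holds whenever $S_f \neq \emptyset$, proved by Jelonek in~\cite{Jel93} for $\K = \C$ (where $S_f$ is a $\C$-uniruled algebraic hypersurface) and in~\cite{Jel02} for $\K = \R$ (where $S_f$ is $\R$-uniruled and satisfies $1 \leq \dim S_f \leq n-1$). Since $\{f(\underline{0})\}$ is zero-dimensional, the containment $S_f \subseteq \{f(\underline{0})\}$ forces $S_f = \emptyset$, and the statement follows.

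The main obstacle is mild and amounts to careful bookkeeping with the non-basic qualifier that is built into the index sets of Corollary~\ref{cor:main:1}: hypothesis (ii) of the statement precisely targets this qualifier in the semi-origin family, while hypothesis (i) is actually stronger than needed for the almost semi-origin family, as it kills every such tuple rather than only the non-basic ones. Beyond this observation, the proof is a one-line consequence of Corollary~\ref{cor:main:1} combined with Jelonek's imported dimensional lower bound on $S_f$.
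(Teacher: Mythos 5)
Your proof is correct and takes essentially the same route as the paper: both arguments read the conclusion off Corollary~\ref{cor:main:1}, with the hypothesis on semi-origin tuples emptying the first union and the hypothesis on almost semi-origin tuples emptying the second. Your explicit second step, excluding $f(\underline{0})$ via Jelonek's lower bound $\dim S_f\geq 1$ for non-empty $S_f$, cleanly handles a point the paper's proof leaves implicit; the only quibble is your aside that the first hypothesis is ``stronger than needed,'' which is moot since a minimized almost semi-origin tuple is never basic, so killing all of them is the same as killing the non-basic ones.
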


\begin{proof} For any given point $y\in S^*_f\cup S_f$, the absence of almost semi-origin faces implies that there are no $\gamma$-Jacobian hyperplanes. Since all the semi-origin tuples are basic, the second statement of Corollary~\ref{cor:main:1} implies that $y\notin S_f$. This makes $S_f$ to be an empty set.
\end{proof}

\begin{proof}[Proof of Corollary~\ref{cor:necessary}] For a given proper non-degenerate map $f$, let $\gamma$ be a minimized origin $m$-tuple of $\underline{\New} f$, and let $y$ be a point in the non-empty $\gamma$-parametrized set $\mathcal{X}_\gamma(f)$. It suffices to prove that \emph{any origin tuple of $\underline{\New} f$ is basic}. 

Assume that $\gamma$ is \emph{not} a basic tuple. Then, we have $m\geq 1$ since the only origin $0$-tuple is $\big(\{\underline{0}\},\ldots,\{\underline{0}\}\big)$, which is basic. Since $f$ is not necessarily $T$-BG, we cannot use Corollary~\ref{cor:main:1} directly. However, in the proof of Proposition~\ref{prop:solutions-infty2}, the statements of paragraphs labeled \textbf{First case}, and \textbf{The last statement}, apply for any non-degenerate map. Moreover, they are enough to show that $y\in S_f$.
\end{proof}

\subsection*{Proof of uniruledness}\label{subsec:unirule} First, we need the following observation.

\begin{lemma}\label{lem:f0}
For any non-degenerate map $f:\K^n\rightarrow\K^n$, whose $\underline{\New} f$ admits a non-basic minimized origin tuple, we have $f(\underline{0})\in S_f$.
\end{lemma}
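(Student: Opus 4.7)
My plan is to exhibit a continuous family $\{x(t)\}_{t \in ]0,1[} \subset T_n$ with $|x(t)| \to \infty$ as $t \to 0^+$ and $f(x(t)) \to f(\underline{0})$; by the characterization of $S_f$ recalled at the beginning of Section~\ref{sec:clas-tor-Jel}, this forces $f(\underline{0}) \in S_f$. The family will be a monomial Puiseux curve $x(t) = (t^{\alpha'_1}, \ldots, t^{\alpha'_n})$ with rational exponents, at least one of them strictly negative.

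The first step is to produce a non-zero vector $\alpha$ in the minimizing cone $C_\gamma$ of the supplied non-basic origin tuple $\gamma$ having some coordinate $\alpha_{j_0} < 0$. Because the unique origin $0$-tuple is basic, $\gamma$ has tuple dimension $m \geq 1$, so $\dim C_\gamma = n - m$; assuming $m < n$ so that $C_\gamma \neq \{0\}$, non-basicness forces the desired $\alpha$ to exist. Indeed, were every $\alpha \in C_\gamma$ to have all coordinates $\geq 0$, one could pick a non-zero one with $\alpha_{j_0} > 0$; using that the minimum of $\langle\cdot, \alpha\rangle$ on $\New f_i$ equals $0$ and is attained at $\underline{0} \in \gamma_i$, every $a \in \gamma_i$ would satisfy $a_{j_0} = 0$, placing all of $\gamma$ inside the coordinate hyperplane $\{x_{j_0} = 0\}$ and contradicting non-basicness.

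Next, I would perturb $\alpha$ into $\Int(C_{\gamma^0}) \cap \Q^n$, where $\gamma^0 = (\{\underline{0}\},\ldots,\{\underline{0}\})$. Since $\gamma^0 \preceq \gamma$ (because $\gamma$ is origin), Proposition~\ref{prop:normal-fan} yields $C_\gamma \subset C_{\gamma^0}$, and $C_{\gamma^0}$ is full-dimensional; the perturbation therefore exists, and by continuity can be chosen with $\alpha'_{j_0} < 0$ preserved. Membership $\alpha' \in \Int(C_{\gamma^0})$ is equivalent to $\underline{0}$ being the unique minimizer of $\langle\cdot,\alpha'\rangle$ on each $\New f_i$, i.e., $\langle a, \alpha'\rangle > 0$ for every $a \in \supp f_i \setminus \{\underline{0}\}$. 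Plugging $x(t) = t^{\alpha'}$ into $f$ then gives $f_i(x(t)) = c_{\underline{0}}^{(i)} + \sum_{a \neq \underline{0}} c_a^{(i)} t^{\langle a, \alpha'\rangle} \to c_{\underline{0}}^{(i)} = f_i(\underline{0})$, while $|x_{j_0}(t)| = t^{\alpha'_{j_0}} \to \infty$. The main technical step is the cone perturbation producing $\alpha'$; once it is in place the Puiseux limit is immediate.
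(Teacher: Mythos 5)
Your proof is correct, and it departs from the paper's argument in its second half. The paper deduces from non-basicness that some coordinate half-ray $\R_{>0}\cdot e_{j_0}$ meets none of the polytopes $\New f_1,\ldots,\New f_n$ (no pure power of $x_{j_0}$ occurs in any $f_i$), so $f$ is constant equal to $f(\underline{0})$ along the entire $x_{j_0}$-axis; the fiber $f^{-1}\big(f(\underline{0})\big)$ is therefore unbounded and non-properness at $f(\underline{0})$ follows directly from the definition. Your first step is the dual form of that same combinatorial observation: a vector $\alpha\in C_\gamma$ with $\alpha_{j_0}<0$ exists exactly when $\R_{>0}\cdot e_{j_0}$ misses every $\New f_i$ (since $\langle ce_{j_0},\alpha\rangle\geq 0$ would otherwise force $\alpha_{j_0}\geq 0$). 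But where the paper then reads off an unbounded fiber inside a coordinate hyperplane, you perturb $\alpha$ into $\Int\big(C_{\gamma^0}\big)$ — legitimate, since $C_\gamma$ is a face of the full-dimensional vertex cone $C_{\gamma^0}$ by Proposition~\ref{prop:normal-fan} and $\underline{0}\in\supp f_i$ by the standing normalization $f(\underline{0})\in T_n$ — and run an explicit monomial curve $t^{\alpha'}$ in $T_n$ escaping to infinity with image converging to $f(\underline{0})$. Your route is slightly longer but stays inside the torus and matches the Puiseux-curve mechanics of Proposition~\ref{prop:solutions-infty1}, which makes it more uniform with the rest of the paper; the paper's route is shorter and gives the stronger conclusion that the fiber itself is non-compact. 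One shared caveat: both arguments need the tuple to be minimized by a \emph{non-zero} vector (your explicit ``assuming $m<n$''); the full tuple minimized only by $\alpha=\underline{0}$ must be excluded, as the paper does implicitly when it invokes ``the minimizing vector $\alpha$''.
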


\begin{proof} The existence of a non-basic tuple $\gamma$ implies that for some $i=1,\ldots,n$, say, $i=1$, none of the Newton polytopes in $\underline{\New} f$ intersects the half-ray $\R_{>0}\cdot e_1$. Indeed, otherwise the minimizing vector $\alpha\in\Q^n$, of any minimized origin tuple $\delta$ would not have negative coordinates, making $\delta$ a basic tuple. Note that this notion is closely related to the notion of \emph{convenience} of polytopes appearing in~\cite[Definition 3.1]{Biv07}, and~\cite[Definition 2.2]{Tha09}. This implies that $f(x_1,0,\ldots,0) = f(\underline{0})$ for any $x_1\in\K$. Therefore, the infinite set $\{x\in\K^n~|~x_i=0,~i\neq 1\}$ belongs now to $f^{-1}\big( f(\underline{0})\big)$. This finishes the proof.
\end{proof}

\begin{theorem}[Uniruledness for $T$-BG]\label{th:unirule}
Let $f:\K^n\rightarrow\K^n$ be a $T$-BG map. Then, for any point $y$ in $S_f$, there exists a polynomial map $\phi:\K\rightarrow S_f$ of positive degree such that $\phi(0)=y$.
\end{theorem}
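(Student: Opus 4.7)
The plan is to combine the decomposition of $S_f$ provided by Corollary~\ref{cor:main:1} with the closedness of $S_f$ in $\K^n$: it then suffices to produce, through each given $y$, a polynomial curve whose image lies in the closure of a piece already known to be contained in $S_f$. Corollary~\ref{cor:main:1} identifies three kinds of such pieces---$\gamma$-parametrized sets, $\gamma$-lifted sets, and $\gamma$-Jacobian hyperplanes---plus the possibly special point $f(\underline{0})$, which I will treat separately at the end.

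When $y \in \mathcal{X}_\gamma(f)$ for a non-basic origin $m$-tuple $\gamma$, write $y = F_\gamma(z_0)$ with $z_0 \in T_m$ and set $\phi(t) = F_\gamma(z_0 + tv)$ for a generic $v \in \K^m$. Then $\phi : \K \rightarrow \K^n$ is polynomial, $\phi(0) = y$, and its image lies in $F_\gamma(\K^m) \subset \overline{F_\gamma(T_m)} = \overline{\mathcal{X}_\gamma(f)} \subset S_f$; since $\tdi(\gamma) = m \geq 1$, at least one component $F_{i,\gamma}$ is non-constant, so $\phi$ has positive degree for generic $v$. If $y$ lies in a $\gamma$-lifted set for a strictly semi-origin $\gamma$, then $y$ lies in an affine subspace of $\K^n$ of dimension $|\theta^c| \geq 1$---a fiber of $\Theta : \K^n \rightarrow \K^{|\theta|}$---entirely contained in that lifted set, and any affine line through $y$ in this fiber is the required polynomial curve. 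When $y \in J\!\mathcal{H}_\gamma(f)$, the set is a finite union of affine hyperplanes of $\K^n$, and any affine line through $y$ inside one of them works.

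The remaining case $y = f(\underline{0}) \in S_f$ rests on the observation that for any origin tuple $\gamma$, the construction of Lemma~\ref{lem:KhovBers2} forces $\gamma^0 = (\{\underline{0}\}, \ldots, \{\underline{0}\})$, so $\overline{U}^\star f_i = U^\star f_i$ and $F_\gamma(\underline{0}) = f(\underline{0})$. Hence, as soon as $\underline{\New} f$ admits a non-basic origin tuple $\gamma$ of tuple dimension $m \geq 1$, the curve $\phi(t) = F_\gamma(tv)$ for $v \in T_m$ generic satisfies $\phi(0) = f(\underline{0})$, $\phi(t) \in \mathcal{X}_\gamma(f) \subset S_f$ for $t \in \K^*$, and is non-constant. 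The main obstacle is therefore establishing the existence of such a $\gamma$ whenever $f(\underline{0}) \in S_f$; Lemma~\ref{lem:f0} provides only the opposite implication. The plan is to prove this converse by refining the Puiseux-series analysis from the proof of Proposition~\ref{prop:solutions-infty1}: any sequence in $T_n$ escaping to infinity with image tending to $f(\underline{0})$ can be lifted to a Puiseux curve $x(t) = s t^\alpha + \cdots$, and the asymptotic balance $f_i(x(t)) \to f_i(\underline{0})$ forces the face of $\New f_i$ minimized by $\alpha$ to be an origin face for every $i$, while the escape condition makes at least one coordinate of $\alpha$ negative, yielding a non-basic origin tuple of positive tuple dimension.
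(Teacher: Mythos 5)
Your treatment of the three positive-dimensional pieces of the decomposition from Corollary~\ref{cor:main:1} matches the paper's proof almost verbatim: an explicit polynomial reparametrization of $F_\gamma$ for origin tuples (the paper uses $t\mapsto F_\gamma((1-t)\rho)$, you use $t\mapsto F_\gamma(z_0+tv)$, both fine once one notes $\overline{\mathcal{X}_\gamma(f)}\subset S_f$ by closedness of $S_f$), and affine lines inside the fibers of $\Theta$ for lifted sets and inside the hyperplanes $J\!\mathcal{H}_\gamma(f)$. The problem is the case $y=f(\underline{0})$, which you correctly isolate as the main obstacle but do not close. Your proposed converse of Lemma~\ref{lem:f0} does not follow from the Puiseux analysis as you describe it. Writing $x_i(t)=s_it^{\alpha_i}+\cdots$ and $m_i=\min_{a\in\supp f_i}\langle a,\alpha\rangle\leq 0$, the convergence $f_i(x(t))\rightarrow f_i(\underline{0})$ forces \emph{either} $m_i=0$ (so $\gamma_i$ is an origin face) \emph{or} $(f_i)_{\gamma}(s)=0$ with $m_i<0$, in which case the leading term cancels and $\gamma_i$ need not contain the origin at all. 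This second branch is not a pathology you can wave away: it is precisely the source of the almost semi-origin tuples and the Jacobian hyperplanes in Proposition~\ref{prop:solutions-infty1}, and in Example~\ref{ex:Jel-set1} the point $f(\underline{0})=(0,0)$ lies on the component $\{y_1=0\}$, which arises from a non-origin couple. So the existence of a non-basic \emph{origin} tuple of positive tuple dimension is not established by your argument, and without it your curve $\phi(t)=F_\gamma(tv)$ has no $\gamma$ to be built from.

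The paper closes this case by a different, softer route that you could adopt: since $\dim S_f\geq 1$ (Jelonek's theorems, cited as~\cite{Jel93} and~\cite{Jel02}), the point $f(\underline{0})$ cannot be isolated in $S_f$, hence it lies in the closure of one of the finitely many sets $\mathcal{X}_{\gamma'}(f)$ or $J\!\mathcal{H}_{\gamma'}(f)$ with $\tdi(\gamma')\geq 1$, and the curves you have already constructed for those pieces (suitably reparametrized so that the limit point $f(\underline{0})$ is hit at $t=0$, using Lemma~\ref{lem:f0} to confirm $f(\underline{0})\in S_f$) do the job. Either import that argument or genuinely prove your converse of Lemma~\ref{lem:f0}; as written, the final step fails.
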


\begin{proof} 
Let $y$ be a point in $S_f$. Corollary~\ref{cor:main:1} shows that $y$ belongs to one of $\mathcal{X}_\gamma(f)$, or $J\!\mathcal{H}_\gamma(f)$ for some non-basic minimized $m$-tuple $\gamma$ of $\underline{\New} f$ as in Proposition~\ref{prop:solutions-infty1}. On the other hand, both $\mathcal{X}_\gamma(f)$, and $J\!\mathcal{H}_\gamma(f)$ belong to $S_f$ (Corollaries~\ref{cor:Properness-through-Faces1}, and~\ref{cor:Properness-through-Faces3}). In what follows, we will prove the claim of the theorem for each of them. 

 Since any hyperplane in $\K^n$ is ruled by lines in $\K^n$ that are realized by linear polynomials $\phi$ above, the result is straight-forward for $\gamma$-Jacobian hyperplanes. Similarly, if $\gamma$ is a semi-origin tuple, the $\gamma$-lifted set $\mathcal{X}_\gamma(f)$ is either empty or is the fiber of a set under a non-trivial projection. Thus, it is also ruled by lines. 
 
Assume in what follows that $y\in\mathcal{X}_\gamma(f)$, with $\gamma$ now being an origin tuple. Hence, If $m=0$, the point $F_\gamma \equiv f(\underline{0})\in\K^n$ cannot be isolated in $S_f$, since $S_f$ has positive dimension (see~\cite{Jel93}, and~\cite{Jel02}). Corollary~\ref{cor:main:1} thus shows that $f(\underline{0})$, belongs to one of $\mathcal{X}_{\gamma'}(f)$, or $J\!\mathcal{H}_{\gamma'}(f)$ for some non-basic minimized $m$-tuple $\gamma$ of $\underline{\New} f$, with $m\geq 1$. We thus treat the case where $m\geq 1$.  

In the notations of Section~\ref{subsec:semi-origin}, there exists a point $\rho\in T_m$ such that $y=F_\gamma(\rho)$. The map $\phi:\K\rightarrow S_f$, $t\mapsto F_{\gamma}\big((1-t)\cdot\rho\big)$, defines a curve in $F_\gamma(\K^m)\cap S_f$ satisfying $y=\phi(0)$. Moreover, we have $\phi(0)=f(\underline{0})$ belongs to $S_f$ from Lemma~\ref{lem:f0}. This proves the claim.
\end{proof}

\section{Properties of very $T$-BG maps}\label{sec:very-T-BG} In what follows, we restrict to maps $f:\K^n\rightarrow\K^n$ having the following property.

\begin{definition}\label{def:very}
We say that $f$ is \emph{very $T$-BG} if it is $T$-BG, and for any semi-origin $m$-tuple $\gamma$ of $\underline{\New} f$, we have $\mathcal{X}_\gamma(f) =\emptyset$ whenever $m +1\leq |\theta^c| $, where $\theta$ is the origin-certifying set for $\gamma$. 
\end{definition}

\subsection{A stratification property}\label{subsec:dimension} The consequence (see Corollary~\ref{cor:strat-Jelonek}) of the following result is an accessible startification property of $S_f$ mentioned in Theorem~\ref{th:very-T-BG}.
\begin{proposition}\label{prop:strat}
Consider a minimized semi-origin $m$-tuple $\gamma$ of $\underline{\New} f$, assume that $m\leq n-2$, and let $\gamma'$ be any minimized $(m+1)$-tuple of $\underline{\New}f$ such that $\gamma\preceq\gamma'$. Then, the set $\mathcal{X}_\gamma(f)$ is either empty, or it is contained in the closure of $\mathcal{X}_{\gamma'}(f)$.
\end{proposition}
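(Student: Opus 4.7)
The plan is to build a single chain transformation adapted to both $\gamma$ and $\gamma'$, to realize $\mathcal{X}_{\gamma'}(f)$ as the image of an explicit relative variety $V$, and to deduce the closure containment via a smoothness-and-transversality analysis at a distinguished boundary point. Since $\gamma\preceq\gamma'$, Proposition~\ref{prop:normal-fan} shows that $C_{\gamma'}$ is a face of $C_\gamma$. The first step is to extend this pair to a maximal flag of cones $C^1\subset\cdots\subset C^n$ in $\mathcal{F}(f)$ with $C^{n-m-1}=C_{\gamma'}$ and $C^{n-m}=C_\gamma$, and to invoke Lemma~\ref{lem:KhovBers2} to produce a chain transformation $U$ whose associated $0$-tuple $\gamma^0$ is $\gamma$-nice (so that $\gamma^0_i=\underline{0}$ iff $\gamma_i$ is an origin face). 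Then $U$ is simultaneously a $\gamma$-chain and a $\gamma'$-chain transformation. Writing $g^y:=\overline{U}^\star(f-y)\in\K[z_1,\ldots,z_n]^n$, Item~\ref{it:lem:restr} identifies its restriction at $z_{j+1}=\cdots=z_n=0$ with $\overline{U}^\star(f-y)_{\gamma^j}$ for each $j$.

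Next, introduce the relative variety
\[
V=\{(y',z)\in\K^n\times T_{m+1}\,:\,g^{y'}(z_1,\ldots,z_{m+1},\underline{0})=\underline{0}\}
\]
and its closure $\overline V$ in $\K^n\times(T_m\times\K)$. By Lemma~\ref{lem:invar}, the projection of $V$ to $\K^n$ equals $\mathcal{X}_{\gamma'}(f)$. For $y\in\mathcal{X}_\gamma(f)$, Corollary~\ref{cor:Properness-through-Faces1} yields $\rho\in T_m$ with $g^y(\rho,\underline{0})=\underline{0}$. The crucial combinatorial observation is that for every $i$ in the origin-jump set $\theta'\setminus\theta$ (where $\theta$ and $\theta'$ denote the origin-certifying sets of $\gamma$ and $\gamma'$), the exponent $w_i:=-U\gamma^0_i\in\N^n$ satisfies $w_i^{(m+1)}>0$: indeed, $\underline{0}\notin\gamma_i$ gives $\gamma^0_i\notin L^m$, while $\underline{0}\in\gamma'_i$ forces $\gamma^0_i\in L^{m+1}$, so $\tilde{e}_{m+1}$ appears with positive coefficient in the decomposition of $-\gamma^0_i$. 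Consequently, all monomials $y'_i z^{w_i}$ appearing in the $\theta'\setminus\theta$ equations of $g^{y'}(z_1,\ldots,z_{m+1},\underline{0})$ vanish at $z_{m+1}=0$, so $(y,\rho,0)\in\overline V$.

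Now I would compute the Jacobian $J$ of the defining equations of $\overline V$ at $(y,\rho,0)$. The $\theta$-indexed rows contribute rank $|\theta|$ through their $y$-block diagonal entries $-\delta_{ij}$. The $\theta^c$-indexed rows are free of $y$-dependence (by the vanishing above) and their $z$-Jacobian restricted to the first $m$ columns equals the Jacobian of $(F_{i,\gamma})_{i\in\theta^c}$ at $\rho$, which has full row rank $|\theta^c|$ by $T$-BG for $\gamma$. Thus $J$ has rank $n$, and $\overline V$ is smooth of dimension $m+1$ at $(y,\rho,0)$. Its tangent space is parameterised by $\mathcal{K}:=\ker(J_z)_{\theta^c}\subset\K^{m+1}$, of dimension $m+1-|\theta^c|\geq 1$ (using the very $T$-BG hypothesis together with $\mathcal{X}_\gamma(f)\neq\emptyset$). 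Since $\mathcal{K}\cap\{v_{m+1}=0\}$ has dimension $m-|\theta^c|$, one less than $\dim\mathcal{K}$, a tangent direction with non-zero $z_{m+1}$-component exists. Hence $V=\overline V\cap\{z_{m+1}\neq 0\}$ is locally dense in $\overline V$ near $(y,\rho,0)$, and projecting onto $\K^n$ yields $y\in\overline{\mathcal{X}_{\gamma'}(f)}$.

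The hard part will be the combinatorial claim that $w_i^{(m+1)}>0$ for $i\in\theta'\setminus\theta$, which underpins both the membership $(y,\rho,0)\in\overline V$ and the vanishing needed in the Jacobian computation. Its proof rests on the equivalence $\underline{0}\in\gamma^j_i\Leftrightarrow\gamma^0_i\in L^j$ for each $j$, itself derived from the perpendicularity between the minimizing cone $C^{n-j}$ and the linear subspace $L^j$.
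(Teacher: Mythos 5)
Your proof is correct and follows essentially the same route as the paper's: the same chain transformation adapted simultaneously to $\gamma$ and $\gamma'$, the same use of the $T$-BG condition to get full rank of the $\theta^c$-equations, the same use of very $T$-BG (via $|\theta^c|\le m$ when $\mathcal{X}_\gamma(f)\neq\emptyset$) to produce a deformation direction with nonzero $z_{m+1}$-component, and the same observation that the monomials $y_i z^{w_i}$ for $i\in\theta'\setminus\theta$ vanish at $z_{m+1}=0$. The paper packages this as an explicit construction of curves $\overline{\rho}(t)$ and $y(t)$ rather than as smoothness of an incidence variety plus the implicit function theorem, but the inputs and mechanism are identical; just note that your stated equivalence $\underline{0}\in\gamma_i^j\Leftrightarrow\gamma_i^0\in L^j$ should be phrased in terms of $\underline{0}-\gamma_i^0$ lying in the cone spanned by $\tilde{e}_1,\ldots,\tilde{e}_j$ (the spaces $L^j$ are defined via Minkowski sums, not individual members), with the reverse implication obtained from $\langle\tilde{e}_k,\alpha\rangle=0$ for $\alpha\in C^{n-j}$ and $k\le j$.
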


\begin{proof} First, note that $\gamma'$ is semi-origin as well. From Proposition~\ref{prop:normal-fan}, there exists a choice of a $\gamma$-chain transformation $U$ of $\underline{\New} f$, such that the two respective minimizing cones of $\gamma$, and $\gamma'$ appear consecutively in the flag~\eqref{eq:cones-tower} defining $U$. For any $y\in \mathcal{X}_\gamma(f)$, the system $\overline{U}^\star(f-y)_\gamma=\underline{0}$ has a solution $\overline{\rho}=(\rho_1,\ldots,\rho_m,\underline{0})\in T_{m}\times\{0\}^{n-m}$. In what follows, we construct a continuous family of points $\{y(t)\}_{]0,1[}\subset \mathcal{X}_{\gamma'}\subset\K^n,$ such that $y=\lim_{t\rightarrow 0}y(t)$. 
 
  Assume first that $\mathcal{X}_\gamma(f)$ is the $\gamma$-parametrized set of $f$. This makes $\gamma'$ an \emph{origin} tuple. Then, the choice of $U$ can be done so that Item~\ref{it:lem:const} is satisfied for both $\gamma$, and $\gamma'$. Let $\rho=(\rho_1,\ldots,\rho_m)\in T_m$, and $(\rho,t)$ be a point in $ T_{m+1}$, constructed by adding a non-zero parameter $t$ as an $(m+1)$-th coordinate. Thus for any $t\in\K^*$, there exists a point $y(t)\in\mathcal{X}_{\gamma'}(f)\subset\K^n$ such that $y(t)=F_{\gamma'}(\rho,t)$. Then, we combine Items \ref{it:lem:restr} and \ref{it:lem:const}, to show that $\lim_{t\rightarrow 0}y(t)=y$. 

Assume in the rest that $\gamma$ is a strictly semi-origin tuple, and that $\mathcal{X}_\gamma(f)$ is non-empty. Let $\theta\subset\{1,\ldots,n\}$ be the origin-certification set for $\gamma$. For this, we need the following statement which will be proven at the end of this proof.

\begin{claim*}
For any positive $r\ll 1$, there exists a ball $B_r(\overline{\rho})\subset\K^n$ such that 
 \begin{equation}\label{eq:ball-vspace-zero}
B_r(\overline{\rho})\cap\big( T_{m+1}\times\{0\}^{n-m-1}\big)\cap\mathcal{Z}\big( \overline{U}^\star(f_i - y_i)_{i\in\theta^c}\big)\setminus\{\overline{\rho}\}
\end{equation} is a non-empty subset of $\K^n$.
\end{claim*} The above claim shows that there exists a continuous family \[\left\lbrace\overline{\rho}(t)\right\rbrace_{t\in]0,1[} = \left\lbrace\big(\zeta(t),\underline{0}\big)\right\rbrace_{t\in]0,1[}\subset T_{m+1}\times\{0\}^{n-m-1}\] of points in the set~\eqref{eq:ball-vspace-zero} above,  
such that $\lim_{t\rightarrow 0}\zeta(t) = \overline{\rho}\in T_m\times\{0\}$, and
\begin{equation}\label{eq:yi-theta-c}
\big(\overline{U}^\star f_i\big)\big(\overline{\rho}(t)\big) = \big(\overline{U}^\star(f_{i}-y_i)\big)\big(\overline{\rho}(t)\big) = \underline{0},~\text{ for }~i\in\theta^c.
\end{equation} Now, one can choose the first subset of coordinates of $y(t)$. Namely, the continuous family \[\big\{\big( y_i(t)\big)_{i\in \theta}\big\}_{t\in~]0,1[}\subset  T_{|\theta|},\] of points converging to $\big( y_i\big)_{i\in \theta}$ whenever $t\rightarrow 0$, and satisfying $\overline{U}^\star f_i\big(\overline{\rho}(t)\big) - y_i(t)=0,~\text{ for }~i\in\theta$. 
Since $\gamma\preceq\gamma'$, we deduce 
\begin{equation}\label{eq:yi-prime}
\overline{U}^\star f_{i,\gamma'}\big(\zeta(t)\big) - y_i(t)=0,~i\in\theta
\end{equation} from Item~\ref{it:lem:restr}. Indeed, since $\zeta(t)$ represents the only $m+1$ non-zero coordinates of $\overline{\rho}(t)$. As for the second set of coordinates of $y(t)$, since $\gamma\preceq\gamma'$, we rewrite the equations in~\eqref{eq:yi-theta-c} as 
\begin{equation}\label{eq:yi-prime1}
\overline{U}^\star f_{i,\gamma'}\big(\zeta(t)\big) - \overline{\rho}(t)^{w_i}y_i =0,~i\in\theta^c,
\end{equation} where $w_{i}\in\N^n$. For $t\in]0,1[$, we choose the point $\big(y_i(t)\big)_{i\in \theta^c}$ as follows. If for some $i\in\theta^c$, we have $w_i\in (\N^*)^{m+1}\times\{0\}^{n-m-1}$ (i.e. $\overline{\rho}(t)^{w_i} = \zeta(t)^{w_i}$), we set $y_i(t) = \zeta(t)^{-w_{i}}\cdot \overline{U}^\star f_{i,\gamma'}\big(\zeta(t)\big)$. Otherwise, we set $y_i(t)=y_i$. As the point $\overline{\rho}(t)$ converges to $\overline{\rho}$, the family of points $y(t)$, satisfying~\eqref{eq:yi-prime}, and~\eqref{eq:yi-prime1}, converges to the initially-fixed point $y$ above. Next, we prove that this family belongs to $\mathcal{X}_{\gamma'}(f)$. 

By construction, the point $\zeta(t)\in  T_{m+1}$ is a solution to $\overline{U}^\star\big(f - y(t)\big)_{\gamma'} =\underline{0}$ for any $t\in ]0,1[$. Consider a $\gamma'$-chain transformation $U'$ as in Lemma~\ref{lem:KhovBers2}. Thus, from Lemma~\ref{lem:invar}, there exists another continuous family of points $\left\lbrace\zeta'(t)\right\rbrace_{t\in]0,1[}\subset T_{m+1}\times\{0\}^{n-m-1}$ such that \[\overline{U}'^\star\big(f-y(t)\big)_{\gamma'}\big(\zeta'(t)\big) = \underline{0}\text{, and } \overline{\rho}^U(t)=\overline{\rho}'^{U'}(t).\] From Section~\ref{subsec:semi-origin}, the above equations define a point $y(t)$ that belongs to $\mathcal{X}_{\gamma'}$. 

 We finish by proving the above claim. Since $f$ is $T$-BG, the solution $\overline{\rho}$, to $\overline{U}^\star(f_{i})=\underline{0}$ is generic. Therefore, the corresponding co-dimensions in $\K^n$, in a neighborhood of $\overline{\rho}$, satisfy \[\cdim\text{~\eqref{eq:ball-vspace-zero}}= n- \dim\left(  T_{m+1}\times\{0\}^{n-m-1}\right) + \dim \mathcal{Z}\big( \overline{U}^\star(f_i)_{i\in\theta^c} \big)  \leq n-m-1 + |\theta^c|.\] On the other hand, the set $\theta^c$ satisfies $|\theta^c|\leq m$ (since $f$ is very $T$-BG and $\mathcal{X}_\gamma(f)\neq\emptyset$). We conclude that the set~\eqref{eq:ball-vspace-zero} becomes non-empty.
\end{proof}

We omit the proof of the following consequence.

\begin{corollary}\label{cor:strat-Jelonek}
The set $\cup_{\delta}\mathcal{X}_\delta$, where $\delta$ runs through all minimized semi-origin tuples of $\underline{\New} f$ that are not basic, admits the stratification property described in Proposition~\ref{prop:strat}.
\end{corollary}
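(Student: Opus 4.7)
\emph{Proof plan.} The plan is to derive Corollary~\ref{cor:strat-Jelonek} as a direct consequence of Proposition~\ref{prop:strat}, by verifying that the indexing family of non-basic semi-origin tuples is stable under passage from an $m$-tuple to an $(m+1)$-tuple via the relation $\preceq$. For any non-basic semi-origin $m$-tuple $\delta$ of $\underline{\New} f$ with $m\le n-2$, I will exhibit a non-basic semi-origin $(m+1)$-tuple $\delta'$ satisfying $\delta\preceq\delta'$, and apply Proposition~\ref{prop:strat} to conclude $\mathcal{X}_\delta(f)\subset\overline{\mathcal{X}_{\delta'}(f)}$.

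The existence of a suitable $\delta'$ is supplied by Proposition~\ref{prop:normal-fan}. The minimizing cone $C_\delta\subset\R^n$ of $\delta$ is an $(n-m)$-cone of the fan $\mathcal{F}(f)$, and since $m\le n-2$ one has $\dim C_\delta\ge 2$. Hence $C_\delta$ admits a proper face $C'$ of dimension $n-m-1$; by Definition~\ref{def:fan}, $C'$ is itself a cone in $\mathcal{F}(f)$, and by the bijection established in Proposition~\ref{prop:normal-fan} it is the minimizing cone of a unique $(m+1)$-tuple $\delta'$ with $\delta\preceq\delta'$.

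The key verification is that $\delta'$ is again non-basic and semi-origin. For the latter, the standing assumption $f(\underline{0})\in T_n$ implies $\underline{0}\in\supp f_i$ for every $i$, and since $\underline{0}$ is uniquely minimized on $\N^n$ by any strictly positive linear form, $\underline{0}$ is a vertex of each $\New f_i$. Consequently, any face of $\New f_i$ that contains $\underline{0}$ automatically has it as a vertex, and is therefore an origin face. Choosing $i$ for which $\delta_i$ is an origin face, the inclusion $\underline{0}\in\delta_i\subset\delta'_i$ makes $\delta'_i$ an origin face as well, so $\delta'$ is semi-origin. For non-basicness: if every $\delta'_i$ were contained in a common coordinate hyperplane $H\subset\R^n$, then the chain $\delta_i\subset\delta'_i\subset H$ would force $\delta$ to be basic, contradicting the assumption.

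With these properties in hand, Proposition~\ref{prop:strat} yields $\mathcal{X}_\delta(f)\subset\overline{\mathcal{X}_{\delta'}(f)}$ whenever $\mathcal{X}_\delta(f)$ is non-empty, and $\delta'$ lies in the indexing family of the union $\cup_\delta\mathcal{X}_\delta(f)$. Iterating the construction produces, for every non-basic semi-origin $m$-tuple $\delta$ with $m\le n-2$, a chain $\delta\preceq\delta^{(m+1)}\preceq\cdots\preceq\delta^{(n-1)}$ of non-basic semi-origin tuples entirely contained in the family, establishing the stratification property within $\cup_\delta\mathcal{X}_\delta(f)$. The only non-routine step is the inheritance of ``semi-origin'' under $\preceq$, which reduces to the elementary vertex observation above; the remainder is a bookkeeping invocation of Propositions~\ref{prop:normal-fan} and~\ref{prop:strat}.
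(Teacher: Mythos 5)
Your argument is correct and is exactly the proof the paper leaves implicit (the paper explicitly omits it): the content is that the indexing family of non-basic semi-origin tuples is closed under passing to $(m+1)$-tuples $\delta'$ with $\delta\preceq\delta'$, after which Proposition~\ref{prop:strat} applies verbatim. Your two verifications --- that $\underline{0}$ is a vertex of each $\New f_i$ (so origin faces propagate upward along $\preceq$, a fact the paper also asserts without proof at the start of the proof of Proposition~\ref{prop:strat}) and that basicness is inherited downward --- are precisely the needed checks.
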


\begin{remark}\label{rem:comput}
Corollary~\ref{cor:strat-Jelonek} shows that the data of $(n-1)$-tuples is sufficient in the computation of the Jelonek set for very $T$-BG maps. 
\end{remark}

\subsection{Properties on smoothness and dimension}\label{subsec:smooth} We start with the following result.

\begin{lemma}\label{lem:smooth}
Let $\gamma$ be a minimized semi-origin $m$-tuple of $\underline{\New} f$. Then the singular locus of $\mathcal{X}_\gamma(f)$ (if non-empty) consists of complete self-intersections of its smooth subsets.
\end{lemma}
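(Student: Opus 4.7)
The plan is to first reduce the statement to the case where $\gamma$ is an origin tuple. If $\gamma$ is strictly semi-origin with origin-certifying set $\theta\subsetneq\{1,\ldots,n\}$, then by Section~\ref{subsec:semi-origin} we have the product decomposition
\[
\mathcal{X}_\gamma(f)\;\cong\;\K^{|\theta^c|}\;\times\;F_{\theta,\gamma}\!\left(V^{\gamma}_{\theta^c}(f)\right).
\]
The affine factor $\K^{|\theta^c|}$ is smooth, so the singular locus of $\mathcal{X}_\gamma(f)$ is $\K^{|\theta^c|}$ times the singular locus of $F_{\theta,\gamma}\!\big(V^{\gamma}_{\theta^c}(f)\big)$, and it is a complete intersection in the total space if and only if the second factor's singular locus is a complete intersection in $\K^{|\theta|}$. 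In the origin case ($\theta=\{1,\ldots,n\}$), we work directly with $F_\gamma(T_m)\subset\K^n$. In both cases the problem becomes the study of the image of a polynomial map $\phi:W\to\K^k$, where $W$ is either $T_m$, or else the subvariety $V^\gamma_{\theta^c}(f)\subset T_m$. Since $f$ is very $T$-BG, the non-emptiness of $\mathcal{X}_\gamma(f)$ forces $|\theta^c|\leq m$, and the $T$-BG property applied to the subset $\theta^c\subset\{1,\ldots,n\}$ via Lemma~\ref{lem:KhovBers2}~\ref{it:lem:restr} guarantees that $W$ is a smooth irreducible algebraic variety of dimension $m'=m-|\theta^c|$.

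Next, I would show that $\phi$ is an immersion onto its image at every point of $W$. For any $y$ in the image, and any $z\in\phi^{-1}(y)$, the point $(z,\underline{0})\in T_m\times\{0\}^{n-m}$ is a generic solution to the system $\overline{U}^\star(f-y)_\gamma=\underline{0}$ by the $T$-BG hypothesis (as exploited in the proof of Proposition~\ref{prop:solutions-infty2} and Lemma~\ref{lem:invar}, which transfers full rankness between the Jacobians of $f-y$ and $\overline{U}^\star(f-y)$). This forces the differential $d_z\phi$ to have full rank $m'$, so each local branch of $\phi(W)$ through $y$ is a smooth submanifold of $\K^k$ of codimension $k-m'$. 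The fibers $\phi^{-1}(y)$ are finite by $T$-BG (the augmented system is zero-dimensional away from a proper algebraic subset of targets $y$), so the germ of $\phi(W)$ at $y$ is a finite union $B_1\cup\cdots\cup B_r$ of smooth local branches $B_i$ indexed by the preimages $z_1,\ldots,z_r$ of $y$. Smoothness of $\phi(W)$ at $y$ fails exactly when $r\geq 2$, i.e., at self-intersection points.

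The main obstacle is the third step, proving that these self-intersections are complete, namely that $\codim_y(B_1\cap\cdots\cap B_r)=\sum_{i=1}^{r}\codim_y B_i$. For this, I would consider the product system on $W^{\times r}$ obtained by demanding $\phi(z_1)=\phi(z_2)=\cdots=\phi(z_r)=y$, which together with the diagonal-avoidance condition $z_i\neq z_j$ cuts out a locally closed subvariety whose image in $\K^k$ parametrizes the $r$-fold self-intersection locus. The $T$-BG property, applied through Lemma~\ref{lem:KhovBers2}~\ref{it:lem:restr}, implies that this product system has Jacobian of maximal rank at the points $(z_1,\ldots,z_r)$; translating this rank statement through the immersions $\phi\!\mid_{B_i}$ gives that the tangent spaces $T_yB_1,\ldots,T_yB_r\subset\K^k$ are in general position in the sense of linear algebra, so their intersection has the expected codimension. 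This is precisely the complete intersection condition, and it identifies $\mathrm{Sing}\big(\mathcal{X}_\gamma(f)\big)$ as the locus of complete self-intersections of the smooth branches of $\mathcal{X}_\gamma(f)$, finishing the proof.
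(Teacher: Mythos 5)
Your first two steps reproduce the paper's entire proof: the reduction of the strictly semi-origin case to the factor $F_{\theta,\gamma}\big(V^{\gamma}_{\theta^c}(f)\big)$ via the decomposition $\mathcal{X}_\gamma(f)\cong\K^{|\theta^c|}\times F_{\theta,\gamma}\big(V^{\gamma}_{\theta^c}(f)\big)$, and the observation that the $T$-BG hypothesis forces the relevant Jacobian matrices $\big(\partial_i F_{j,\gamma}\big)_{ij}$ to have full rank, so that the parametrizing map is an immersion from a smooth source and the image can only fail to be smooth along self-intersections. Up to that point you and the paper agree exactly.

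The problem is your third step, and it is a genuine gap. You claim that the $T$-BG property, applied through Lemma~\ref{lem:KhovBers2}~\ref{it:lem:restr}, gives maximal rank for the Jacobian of the product system on $W^{\times r}$ cutting out $\phi(z_1)=\cdots=\phi(z_r)=y$, and hence general position of the tangent spaces $T_yB_1,\ldots,T_yB_r$. But Definition~\ref{def:weakly} is a condition on \emph{individual} solutions of the subsystems $\big(\overline{U}^\star f_i\big)_{i\in I}=0$ in $T_m\times\{0\}^{n-m}$: it asserts that each such solution is a regular point of that one system. It says nothing about $r$-tuples of \emph{distinct} solutions $z_1,\ldots,z_r$ sharing the same image $y$, and the diagonal-avoidance system you set up on $(T_m)^{\times r}$ is not among the systems whose genericity the definition controls. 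Transversality of distinct branches of the image at a common point is a condition of a different nature from regularity of each preimage, and nothing in the $T$-BG hypothesis as stated rules out, say, two branches meeting tangentially; so the completeness of the self-intersections does not follow from the argument you give. (For what it is worth, the paper's own proof stops after the full-rank observation and does not explicitly justify the ``complete'' part of the statement either; you are right that this is the step that actually needs an argument, but the one you propose reads more into the $T$-BG definition than it contains.)
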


\begin{proof}
If $\gamma$ is an origin tuple, the $m\times n$-Jacobian matrix $\big(\partial_iF_{j,\gamma}\big)_{ij}$ always has full rank, since $f$ is $T$-BG. Hence, the set $\mathcal{X}_\gamma(f)$ is non-singular, and we are done. 

Assume now that $\gamma$ is a strictly semi-origin tuple, and denote by $\theta$ the origin-certification set for $\gamma$. Then, we have $\mathcal{X}_{\gamma}(f )\cong \K^{|\theta^c|}\times F_{\theta,\gamma}\left(V^{\gamma}_{\theta^c} ( f)\right)\subset\K^n$, 

where \[V^{\gamma}_{\theta^c} (f) = \left\lbrace z\in T_m~|~F_{i,\gamma}(z)=0,~i\in\theta^c\right\rbrace.\] Hence, the set $V^{\gamma}_{\theta^c} (f)$ is singular if $\mathcal{X}_{\gamma}(f)$ is singular. As before, the $ m\times |\theta^c|$-matrix $\big(\partial_iF_{j,\gamma}\big)_{ij},$ has full rank for any $z\in V^{\gamma}_{\theta^c} (f)\subset T_m$ from $f$ being $T$-BG. This finishes the proof.
\end{proof}

\begin{lemma}\label{lem:pert-non-prop}
Let $\gamma$ be a minimized semi-origin tuple of $\underline{\New} f$. Then, any perturbation $f\mapsto \tilde{f}$ in $\K$ on the non-zero coefficients appearing in the polynomials in $f$, satisfies $\dim_{\K} \mathcal{X}_\gamma (\tilde{f}) = \dim_{\K} \mathcal{X}_\gamma(f)$.
\end{lemma}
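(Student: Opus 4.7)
The perturbation $f\mapsto\tilde{f}$ alters only the values of non-zero coefficients and hence preserves each $\supp f_i$. Consequently, the Newton tuple $\underline{\New}f$, the face $\gamma$ as a minimized semi-origin $m$-tuple, its origin-certification set $\theta$, and any $\gamma$-chain transformation $U$ from Lemma~\ref{lem:KhovBers2} all transfer unchanged from $f$ to $\tilde{f}$. In particular, the polynomial maps $F_{\theta,\gamma}$ and the defining equations of $V^\gamma_{\theta^c}$ attached to $\tilde{f}$ share supports with those attached to $f$; only their non-zero coefficients differ, and vary continuously in the torus $(\K^*)^N$ indexed by $\bigcup_i\supp f_i$.

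By the description in Section~\ref{subsec:semi-origin}, one has $\mathcal{X}_\gamma(f)\simeq \K^{|\theta^c|}\times F_{\theta,\gamma}\bigl(V^\gamma_{\theta^c}(f)\bigr)$, so to compare $\dim_\K\mathcal{X}_\gamma(f)$ with $\dim_\K\mathcal{X}_\gamma(\tilde f)$ it suffices to compare the image dimensions of $F_{\theta,\gamma}$ restricted to $V^\gamma_{\theta^c}(f)$ and of $\tilde{F}_{\theta,\gamma}$ restricted to $V^\gamma_{\theta^c}(\tilde f)$. The ($T$-BG) hypothesis and the dimension count from Section~\ref{subsec:semi-origin} already give $\dim V^\gamma_{\theta^c}(f)=\dim V^\gamma_{\theta^c}(\tilde{f})=\max(0,m-|\theta^c|)$, so the source dimensions match and only the image dimensions are at stake.

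The dimension of the image of a polynomial map between affine varieties equals the generic rank of its differential, and this rank is upper semi-continuous in the coefficients; hence the locus in $(\K^*)^N$ on which it attains the combinatorial maximum determined by the supports is Zariski-open and dense. The $T$-BG assumption, itself an open condition that imposes non-degeneracy of the Jacobian of every restricted subsystem at its boundary solutions in $T_m\times\{0\}^{n-m}$, forces $f$ to sit in this generic-rank locus: if the top minors of $\Jac F_{\theta,\gamma}|_{V^\gamma_{\theta^c}(f)}$ vanished identically, one could specialize to a point on the boundary $T_m\times\{0\}^{n-m}$ and obtain a degenerate solution of some restricted subsystem, contradicting $\gamma$-genericity. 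The same statement applies to $\tilde f$, since it lies in the same open stratum where the maximum rank is attained, and hence $\dim F_{\theta,\gamma}\bigl(V^\gamma_{\theta^c}(f)\bigr)=\dim \tilde F_{\theta,\gamma}\bigl(V^\gamma_{\theta^c}(\tilde f)\bigr)$, concluding the proof.

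The hard part will be the passage from the boundary Jacobian condition packaged in the $T$-BG definition (which concerns solutions in $T_m\times\{0\}^{n-m}$ of restricted subsystems) to a generic-rank statement along the interior variety $V^\gamma_{\theta^c}(f)\subset T_m$. One should likely argue by contradiction: a drop in the generic rank of $\Jac F_{\theta,\gamma}|_{V^\gamma_{\theta^c}(f)}$ would, by taking a Puiseux arc limiting to a boundary point as in the proof of Proposition~\ref{prop:solutions-infty1}, produce a rank-deficient solution of the relevant restricted subsystem, violating the very $T$-BG hypothesis on $f$ and on $\tilde f$.
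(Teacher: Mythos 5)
Your reduction is the same as the paper's: perturbation preserves supports, hence $\gamma$, $\theta$, and $U$, so everything comes down to comparing $\dim V^{\gamma}_{\theta^c}$ and $\dim F_{\theta,\gamma}\big(V^{\gamma}_{\theta^c}\big)$ before and after perturbation; the treatment of the source dimension is essentially the paper's as well. The gap is in the step you yourself flag as "the hard part", and it is genuine. The $\gamma$-genericity in Definition~\ref{def:weakly} only constrains solutions in $T_m\times\{0\}^{n-m}$ of the subsystems $\{\overline{U}^\star f_i=0\}_{i\in I}$, i.e.\ points where the selected restricted polynomials \emph{vanish}. A drop in the rank of $dF_{\theta,\gamma}$ at a point $z\in V^{\gamma}_{\theta^c}(f)$ with $F_{\theta,\gamma}(z)=y\neq \underline{0}$ is a degeneracy of the system $\{F_{i,\gamma}-y_i\}_{i\in\theta}\cup\{F_{i,\gamma}\}_{i\in\theta^c}$, which is not one of the systems covered by the definition; moreover $V^{\gamma}_{\theta^c}(f)$ already lives in $T_m\times\{0\}^{n-m}$, so "specializing to a point on the boundary" produces nothing new, and the promised contradiction with $T$-BG never materializes. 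In addition, $\tilde f$ is an \emph{arbitrary} perturbation and is not assumed $T$-BG, so even if you could place $f$ in the maximal-generic-rank stratum this way, you could not do the same for $\tilde f$ by the same token.

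The paper closes exactly this gap differently: it passes to the incidence variety \eqref{eq:global1} in $T_m\times\K^{|\theta|}$, in which the shifts $y_i$, $i\in\theta$, are free coordinates. There the $T$-BG hypothesis is applied to the full $n\times n$ system $\overline{U}^\star(f-y)=\underline{0}$ at $(z,\underline{0})$, "independently of $y$" (the $y_i$ with $i\in\theta$ only shift constant terms by Item~\ref{it:lem:const}), making \eqref{eq:global1} a complete intersection whose dimension --- and hence that of its projection to the $y$-coordinates --- is stable under perturbation of the non-zero coefficients. To repair your argument you would either need to import this incidence-variety step, or prove directly (not merely assert via semi-continuity) that both $f$ and $\tilde f$ realize the maximal generic rank of $dF_{\theta,\gamma}|_{V^{\gamma}_{\theta^c}}$ over the coefficient torus; as written, that is the missing ingredient.
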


\begin{proof}
Since both cases are very similar to prove, we restrict to the case where $\gamma$ is a strictly semi-origin $m$-tuple. Denote by $\theta$ the origin-certification set for $\gamma$, and let $U$ be any $\gamma$-chain transformation as in Lemma~\ref{lem:KhovBers2}. For any $y$ in $\mathcal{X}_\gamma(f)$, there exists a point $z\in T_m$ such that~\eqref{eq:F-gamma-theta} is satisfied. Since $f$ is very $T$-BG, the set $V_{\theta^c}^\gamma(f)$, defined in Section~\ref{subsec:semi-origin}, is either empty or is the result of a complete intersection in $ T_m$ having dimension $m-|\theta^c|\geq 0$. Hence, any perturbation $f\mapsto \tilde{f}$ on the coefficients appearing in $F_{i,\gamma}(z)$, for $i\in\theta^c$, will satisfy $\dim_{\K} V_{\theta^c}^\gamma(\tilde{f})=m-|\theta^c|$. 

 On the other hand, Item~\ref{it:lem:restr} shows that $z$ and $y$ satisfying equations~\eqref{eq:F-gamma-theta} means that the point $(z,\underline{0})\in T_m\times\{0\}^{n-m} $ is a solution to $\overline{U}^\star(f-y)=\underline{0}$. Since $f$ is $T$-BG, this solution is generic independently of $y\in\K^n$. This implies that 
 \begin{equation}\label{eq:global1}
 \left\lbrace\left.(z,y)\in T_m\times\K^{|\theta|}~\right|~F_{i,\gamma}(z)=y_i,~i\in\theta\text{, and }F_{i,\gamma}(z)=0,~i\in\theta^c \right\rbrace\subset T_m\times\K^{|\theta|}
 \end{equation} is also the result of a complete intersection. Hence, for any $i\in\{1,\ldots,n\}$, the perturbation $f\mapsto \tilde{f}$, resulting from the perturbation $F_{\theta,\gamma}\mapsto\tilde{F}_{\theta,\gamma}$, on the coefficients appearing in $F_{i,\gamma}$, $i\in\theta$ will not change the dimension of~\eqref{eq:global1}. Therefore, we have \[ \dim_{\K}\tilde{F}_{\theta,\gamma}\big(V_{\theta^c}^\gamma(\tilde{f})\big)=\dim_{\K} F_{\theta,\gamma}\big(V_{\theta^c}^\gamma(f)\big).\] Finally, we multiply both above sets by $\K^{|\theta^c|}$ in order to obtain each of $\mathcal{X}_\gamma(\tilde{f})$, and $\mathcal{X}_\gamma(f)$. We deduce that $\dim_{\K}\mathcal{X}_\gamma(f)=\dim_{\K}\mathcal{X}_\gamma(\tilde{f})$.
\end{proof}

\begin{proposition}\label{prop:dim-semi-compl}
Let $\K = \C$, and let $\gamma$ be a minimized non-basic semi-origin tuple of $\underline{\New} f$. Then, there exists a minimized semi-origin tuple $\gamma'$ of $\underline{\New} f$ such that $\gamma\preceq\gamma'$, and $\dim_\C \mathcal{X}_{\gamma'}(f)=n-1$.
\end{proposition}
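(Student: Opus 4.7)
The plan is to first extend $\gamma$ to a minimized semi-origin $(n-1)$-tuple $\gamma'$ of $\underline{\New} f$ with $\gamma \preceq \gamma'$, and then to show $\dim_\C \mathcal{X}_{\gamma'}(f) = n - 1$ via a genericity argument based on Lemma~\ref{lem:pert-non-prop}. Set $m = \tdi(\gamma)$, and note that $m \leq n - 1$: the case $m = n$ corresponds to $\gamma = \underline{\New} f$, which is implicitly excluded since the Puiseux vector $\alpha$ in the proof of Proposition~\ref{prop:solutions-infty1} must be non-zero, forcing its minimizing cone to have positive dimension.

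\emph{Extension to an $(n-1)$-tuple.} If $m = n - 1$, set $\gamma' = \gamma$. Otherwise $m < n - 1$, and the minimizing cone $C_\gamma \in \mathcal{F}(f)$ has dimension $n - m \geq 2$; by Proposition~\ref{prop:normal-fan}, any ray inside $C_\gamma$ corresponds to a minimized $(n-1)$-tuple $\gamma'$ of $\underline{\New} f$ with $\gamma \preceq \gamma'$. The inclusions $\gamma_i \subseteq \gamma'_i$ force $\gamma'$ to be semi-origin (origin faces remain origin upon enlargement) and non-basic (no coordinate hyperplane can contain all $\gamma'_i \supseteq \gamma_i$).

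\emph{Computing the dimension.} Let $\theta \subseteq \{1, \ldots, n\}$ denote the origin-certification set of $\gamma'$, so that $\theta \neq \emptyset$ and $|\theta^c| \leq n - 1$. By Lemma~\ref{lem:pert-non-prop}, $\dim_\C \mathcal{X}_{\gamma'}(f)$ is invariant under perturbations of the non-zero coefficients of $f$; we may therefore replace $f$ by a very $T$-BG map $\tilde{f}$ whose coefficients on the monomial support of $f$ are generic. For such $\tilde{f}$, Bernstein's theorem applied over $\C$ to the system $\{F_{i, \gamma'}(\tilde{f}) = 0\}_{i \in \theta^c}$ guarantees that $V^{\gamma'}_{\theta^c}(\tilde{f}) \subset T_{n-1}$ is non-empty; the very $T$-BG hypothesis, combined with $T$-BG genericity of solutions, then ensures it is a smooth complete intersection of dimension $n - 1 - |\theta^c|$. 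Generic choice of the coefficients of $\tilde{f}$ further implies that the restriction of $F_{\theta, \gamma'}(\tilde{f})$ to $V^{\gamma'}_{\theta^c}(\tilde{f})$ is generically finite onto its image, whence the image has dimension $n - 1 - |\theta^c|$. Assembling these,
\[
\dim_\C \mathcal{X}_{\gamma'}(\tilde{f}) = |\theta^c| + (n - 1 - |\theta^c|) = n - 1,
\]
and Lemma~\ref{lem:pert-non-prop} transfers the equality to $f$.

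\emph{Main obstacle.} The most delicate point is the algebraic-independence claim underlying the generic finiteness in the previous paragraph: for generic $\tilde{f}$, the restrictions $\{F_{i, \gamma'}(\tilde{f})|_V\}_{i \in \theta}$ must satisfy no nontrivial polynomial relation on $V = V^{\gamma'}_{\theta^c}(\tilde{f})$. Any such relation would define a proper Zariski-closed subset of the coefficient parameter space of $\tilde{f}$, which generic $\tilde{f}$ avoids; carrying out this codimension count carefully, and handling the boundary subcase $\theta = \{1, \ldots, n\}$ where $V = T_{n-1}$ and one must directly show the image of $F_{\gamma'}(\tilde{f})\colon T_{n-1} \to \C^n$ has dimension $n - 1$, constitute the main technical work.
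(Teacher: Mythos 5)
Your route is genuinely different from the paper's, and it has a gap at exactly the point you flag as the ``main obstacle'' --- but the obstacle is not a technical codimension count that can be carried out; it is an obstruction that genericity of coefficients cannot remove. The supports of the restricted polynomials $F_{i,\gamma'}$ are \emph{fixed} faces of the given Newton polytopes, and the dimension of the image of $F_{\theta,\gamma'}$ (restricted to $V^{\gamma'}_{\theta^c}(\tilde f)$) is bounded above by combinatorial data of those supports, not only by $\tdi(\gamma')=n-1$. For instance, an $(n-1)$-tuple $\gamma'$ may have several members equal to the origin vertex $\{\underline 0\}$ (so the corresponding $F_{i,\gamma'}$ are constants), or several members that are parallel segments through the origin (so the corresponding $F_{i,\gamma'}$ are all polynomials in one and the same monomial); in either case the image of $F_{\gamma'}\colon T_{n-1}\to\C^n$ has dimension strictly less than $n-1$ for \emph{every} choice of coefficients, even though $\tdi(\gamma')$ can still equal $n-1$ (e.g.\ $n=4$ with three supports equal to $\{\underline 0, u\}$ and one $3$-dimensional support). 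So the ``bad locus'' in coefficient space is not a proper Zariski-closed subset --- it is everything --- and your claim that an \emph{arbitrary} $(n-1)$-extension $\gamma'$ of $\gamma$ satisfies $\dim_\C\mathcal{X}_{\gamma'}(f)=n-1$ is strictly stronger than the proposition and false in general. (A secondary issue: Bernstein's theorem does not by itself give non-emptiness of the underdetermined system $\{F_{i,\gamma'}=0\}_{i\in\theta^c}$; e.g.\ a member $\gamma'_i$ that is a non-origin vertex makes $F_{i,\gamma'}$ a single monomial with empty zero set in the torus.)

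The paper avoids committing to any particular $\gamma'$. Its proof is soft: take $y\in\mathcal{X}_\gamma(f)$, invoke Jelonek's theorem that $\dim_\C S_f=n-1$ together with the decomposition of $S_f$ in Corollary~\ref{cor:main:1}, and use the stratification of Proposition~\ref{prop:strat} plus a coefficient perturbation (via Lemma~\ref{lem:pert-non-prop}) to arrange that, near $y$, the only components of $S_f$ are closures of sets $\mathcal{X}_{\gamma'}(f)$ with $\gamma\preceq\gamma'$; since their union has local dimension $n-1$ at $y$, at least one of them --- not a prescribed one --- has dimension $n-1$. Your argument never uses the global input $\dim_\C S_f=n-1$, which is precisely what substitutes for the direct dimension count you attempt. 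To repair your approach you would have to characterize which extensions $\gamma'$ admit a generically finite $F_{\theta,\gamma'}|_V$ and prove that at least one such extension of $\gamma$ always exists; that existence statement is essentially the content of the proposition and is what the paper extracts from Jelonek's theorem.
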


\begin{proof}
Let $y$ be a point in $\mathcal{X}_{\gamma}(f)$. We start by the following assumption. \emph{The point $y$ does not belong to $\mathcal{X}_\delta(f)$ (resp. $J\!\mathcal{H}_\delta (f)$) for any semi-origin (resp. almost semi-origin) tuple $\delta$ of $\underline{\New} f$, where $\gamma\npreceq\delta$}. Then, we obtain the result from the following two facts: The first uses $\dim S_f=n-1$ in~\cite{Jel93} for complex maps, from which we deduce that~\eqref{eq:union-Gamma} has dimension $n-1$ as well. The second uses Proposition~\ref{prop:strat} to show that $y$ belongs to all sets $\mathcal{X}_{\gamma'}(f)$ whose tuples $\gamma'$ satisfy $\gamma\preceq\gamma'$. Hence, at least one of those $\gamma'$ has to satisfy $\dim\mathcal{X}_{\gamma'}(f) = n-1$.

Now, we show that the above assumption can be always made without loss of generality for our proof. If there exists a semi-origin (resp. almost semi-origin) $d$-tuple $\delta$ of $\underline{\New} f$, such that $\gamma\npreceq\delta$, and $y\in\overline{\mathcal{X}_\delta(f)}$ (resp. $y\in J\!\mathcal{H}_\delta$), then we proceed as follows. 

Choose a $\gamma$-chain transformation, a $\delta$-chain transformation $U,V$ respectively, and two points $z,\zeta$ in $(\C^*)^m,(\C^*)^d$, so that $\overline{U}^\star(f - y)_{\gamma}(z)=\underline{0}$, and $\overline{V}^\star(f - y)_{\delta}(\zeta)=\underline{0}$. Since $f$ is $\C^*$-BG, for any perturbation $f\mapsto\tilde{f}$ as in Lemma~\ref{lem:pert-non-prop}, there exists a point $\tilde{y}\in\C^n$, close to $y$, and a point $\tilde{z}\in (\C^*)^m$, close to $z$, such that $\overline{U}^\star(f - \tilde{y})_{\gamma}(\tilde{z})=\underline{0}$. We choose such a perturbation by replacing $c_a$ by $c_a+\epsilon$, for some generic $\epsilon\in\C^*$, with $c_a$ is a coefficient of a polynomial $f_{i,\gamma}$, satisfying $a\in\gamma_i\setminus\delta_i$ for some $i\in\{1,\ldots,n\}$. Such coefficients exist since $\gamma$ is \emph{not} a sub-tuple of $\delta$. 

Note that $\overline{V}^\star(\tilde{f} - y)_{\delta}=\overline{V}^\star(f - y)_{\delta}$ for any $y\in\K^n$. Hence, using Lemma~\ref{lem:pert-non-prop}, we can assume that $f$ is $\C^*$-BG generic enough so that the above choice of $\epsilon$ induces a perturbation $y\mapsto \tilde{y}$, such that $\overline{V}^\star(f - \tilde{y})_{\delta}=\underline{0}$ does \emph{not} have solutions in $T_d$. Indeed, this system is over-determined with $n$ equations and less than $n$ variables, and the point $y\in \K^n$ contributes to its coefficients.

 Therefore, we can find a series of perturbations as above summed up as $f\mapsto\hat{f}$, and satisfying the assumption in the first paragraph, but for $\hat{f}$ instead of $f$. Finally, since $f$ is $\C^*$-BG, the proof follows from Lemma~\ref{lem:pert-non-prop}. 

\end{proof}

\begin{proposition}\label{prop:dim-semi-real}
Let $\K=\R$, and let $\gamma$ be a minimized non-basic semi-origin tuple of $\underline{\New} f$. Then, there exists a minimized semi-origin tuple $\gamma'$ of $\underline{\New} f$ such that $\gamma\preceq\gamma'$, and $\dim_\R \mathcal{X}_{\gamma'}(f)=n-1$.
\end{proposition}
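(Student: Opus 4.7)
The plan is to reduce Proposition~\ref{prop:dim-semi-real} to its complex counterpart Proposition~\ref{prop:dim-semi-compl} via complexification. Given the real very $T$-BG map $f : \R^n \to \R^n$, the same polynomial expressions $f_1, \ldots, f_n$ also define a polynomial map over $\C$, and since the Newton tuple depends only on supports, $\underline{\New} f$ is unchanged; in particular, $\gamma$ remains a minimized non-basic semi-origin tuple in the complex setting. The first step is to perturb $f$ within the real very $T$-BG locus to $\tilde{f}$ so that the same polynomial, regarded as a complex map, is also very $T$-BG over $\C$. Both conditions are Zariski-open on the space of real coefficients, so such a perturbation exists. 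By Lemma~\ref{lem:pert-non-prop}, the equality $\dim_\R \mathcal{X}_\delta(\tilde{f}) = \dim_\R \mathcal{X}_\delta(f)$ holds for every minimized semi-origin tuple $\delta$, so it suffices to prove the conclusion for $\tilde{f}$.

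Applying Proposition~\ref{prop:dim-semi-compl} to $\tilde{f}$ regarded over $\C$ produces a minimized semi-origin tuple $\gamma'$ with $\gamma \preceq \gamma'$ and complex dimension $\dim_\C \mathcal{X}_{\gamma'}(\tilde{f}) = n-1$. The next step transfers this to the real setting. The parametrizing map $F_{\gamma'}$ and, in the strictly semi-origin case, the polynomials $\{F_{i,\gamma'}\}_{i \in \theta^c}$ cutting out $V_{\theta^c}^{\gamma'}(\tilde{f})$, all have real coefficients because $\tilde{f}$ does. The generic rank of such a Jacobian matrix over $(\R^*)^{m'}$ equals its generic rank over $(\C^*)^{m'}$, since its non-identically-vanishing minors are real polynomials and hence cannot vanish identically on the Zariski-dense subset $(\R^*)^{m'}$. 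Combined with the constant-rank theorem applied to $F_{\gamma'}$ (respectively $F_{\theta,\gamma'}|_{V_{\theta^c}^{\gamma'}(\tilde{f})}$), this yields $\dim_\R \mathcal{X}_{\gamma'}(\tilde{f}) = n-1$ provided $\mathcal{X}_{\gamma'}(\tilde{f})$ is non-empty over $\R$.

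The main obstacle is verifying this non-emptiness in the strictly semi-origin case, where $V_{\theta^c}^{\gamma'}(\tilde{f})$ could a priori be empty even though its complex analog has dimension $m' - |\theta^c|$. This is handled via Proposition~\ref{prop:strat}: under the (implicit) hypothesis that $\mathcal{X}_\gamma(\tilde{f}) \neq \emptyset$, pick any $\tilde{y} \in \mathcal{X}_\gamma(\tilde{f})$ and apply Proposition~\ref{prop:strat} inductively, ascending through sub-tuples from tuple dimension $\tdi(\gamma)$ up to $\tdi(\gamma')$. This chain of inclusions places $\tilde{y}$ in $\overline{\mathcal{X}_{\gamma'}(\tilde{f})}$, so in particular $\mathcal{X}_{\gamma'}(\tilde{f})$ is non-empty over $\R$, completing the argument.
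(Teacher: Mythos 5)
Your proposal is correct and follows the same architecture as the paper's proof: complexify $f$, use Lemma~\ref{lem:pert-non-prop} to perturb the real coefficients so that the complexification becomes very $T$-BG without altering $\dim_\R\mathcal{X}_\delta$, apply Proposition~\ref{prop:dim-semi-compl} to obtain $\gamma'$ with $\dim_\C\mathcal{X}_{\gamma'}=n-1$, and then descend to $\R$. The only divergence is in the descent step: the paper argues by contradiction that $\dim_\R\mathcal{X}_{\gamma'}(f)<\dim_\C\mathcal{X}_{\gamma'}(\C f)$ would force $\mathcal{X}_{\gamma'}(f)$ into the singular locus of $\mathcal{X}_{\gamma'}(\C f)$, which Lemma~\ref{lem:smooth} rules out, whereas you argue directly that the Jacobians involved have real coefficients, so their generic ranks over $(\R^*)^{m'}$ and $(\C^*)^{m'}$ coincide, and you supply the needed non-emptiness over $\R$ by iterating Proposition~\ref{prop:strat} up the flag from $\gamma$ to $\gamma'$ --- exactly the role played by the inclusion diagram~\eqref{eq:main-diag} in the paper. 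These two mechanisms are equivalent in substance (both reduce to the existence of a real point at which the complex set is smooth of full dimension); your version has the mild advantage of not relying on the precise content of Lemma~\ref{lem:smooth}, but in the strictly semi-origin case you should be slightly more careful that the Zariski-density argument is applied to the real locus of $V^{\gamma'}_{\theta^c}$ inside its complex counterpart (which holds here because the $T$-BG condition makes that variety smooth and you have shown its real locus is non-empty), not merely to $(\R^*)^{m'}\subset(\C^*)^{m'}$.
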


\begin{proof}
Let $\C f:\C^n\rightarrow\C^n$ be the map, defined as the extension of $f$ to the complex space. Note that this map is not necessarily very $\C^*$-BG. Namely, equation $f =\underline{0}$ might have complex-conjugate solutions in $ \C P^n\setminus (\C^*)^n$ that are not generic. However, Lemma~\ref{lem:pert-non-prop} shows that perturbations (in $\R$) of the non-zero coefficients of $f$ would not change the dimension of $\mathcal{X}_\gamma(f)\subset\R^n$. We thus use such real perturbations to assume, without loss of generality, that $\C f$ is very $\C^*$-BG.

For any minimized semi-origin tuple $\delta$ of $\underline{\New} f$, we have 
\[\mathcal{X}_\delta(f)\subset \R\cap\mathcal{X}_{\delta}(\C f)\subset\mathcal{X}_\delta (\C f).\] On the other hand, Proposition~\ref{prop:strat} shows that there exists a minimized semi-origin tuple $\gamma'$ of $\underline{\New} f$ such that $\gamma\preceq\gamma'$, and $\mathcal{X}_\gamma( f)\subset \overline{\mathcal{X}_{\gamma'}(f)}$. Moreover, the above inclusion holds true for $\C f$ as well. Therefore, we get the diagram 
\begin{equation}\label{eq:main-diag}
\begin{matrix}
 \mathcal{X}_\gamma(\C f) & \subset & \overline{\mathcal{X}_{\gamma'}(\C f)}  \\ 
 \cup & \ & \cup  \\
 \mathcal{X}_\gamma( f)&\subset  & \overline{\mathcal{X}_{\gamma'}( f)}  \\
\end{matrix}
\end{equation} We use Proposition~\ref{prop:dim-semi-compl} to choose $\gamma'$ above so that $\dim_\C \overline{\mathcal{X}_{\gamma'}(\C f)} = n-1$. To prove that $\dim_\R \mathcal{X}_{\gamma'} = n-1$, we proceed as follows. Recall that $\mathcal{X}_{\gamma'}(f)\subset\R^n$ is a subset of an intersection of algebraic varieties in $\R^n$ (see Section~\ref{subsec:semi-origin}). Then, inequality $\dim_\R \mathcal{X}_{\gamma'}(f) < \dim_\C \mathcal{X}_{\gamma'}(\C f)$ implies that the set $\mathcal{X}_{\gamma'}(f)$ is a subset of the singular locus of $\mathcal{X}_{\gamma'}(\C f)$. However, Lemma~\ref{lem:smooth} shows that this is not the case. This finishes the proof.
\end{proof}
\subsection{Proof of Theorem~\ref{th:very-T-BG}} Since the stratification property is already proven in Corollary~\ref{cor:strat-Jelonek}, we will only show the weak smoothness, and dimension properties. From Section~\ref{sec:compute}, the set $S_f$ is written as the union appearing in~\eqref{eq:union-Gamma}, where $\delta$ runs through all semi-origin tuples and almost semi-origin tuples of $\underline{\New} f$. Corollary~\ref{cor:Properness-through-Faces3} shows that $\cup_{\delta}J\!\mathcal{H}_\delta(f)$ is a union of hyperplanes in $\K^n$. Hence, is a union of smooth, and $(n-1)$-dimensional components in $\K^n$. 

On the other hand, Propositions~\ref{prop:strat}, and~\ref{prop:dim-semi-compl} (resp.~\ref{prop:dim-semi-real}) show that $\cup_{\delta}\mathcal{X}_\delta(f)$ is a set having dimension $n-1$ in $\K^n$ for $\K = \C$ (resp. $\R$). This shows that $\dim_\K S_f = n-1$. The weak smoothness property for the union $\cup_{\delta}\mathcal{X}_\delta(f)$ is a consequence of Lemma~\ref{lem:smooth}. \qed

\subsection*{Acknowledgement} I am grateful to Prof. Zbigniew Jelonek for introducing me to the problem, and for his helpful remarks.

\subsection*{Contact}
  Boulos El Hilany,
  Instytut Matematyczny Polskiej Akademii Nauk,
  ul. \'Sniadeckich 8
  00-656, Warsaw,
  Poland;
\href{mailto:boulos.hilani@gmail.com}{boulos.hilani@gmail.com}.

\bibliographystyle{alpha}					   

\bibliography{mainbib}

\end{document}